\newtheorem{theorem}{Theorem}[section]
\newtheorem{definition}[theorem]{Definition}
\newtheorem{example}[theorem]{Example}
\newtheorem{lemma}{Lemma}[section]
\newtheorem{proposition}{Proposition}[section]
\newtheorem{remark}[theorem]{Remark}
\newtheorem{corollary}{Corollary}[section]
\newenvironment{proof}[1][Proof]{\textbf{#1.} }{\ \rule{0.5em}{0.5em}}
\def\P{{\bf P}}
\def\V{\mathcal{V}}
\def\W{\mathcal{W}}
\begin{document}\allowdisplaybreaks
\title{{\huge \bf Adjusted Viterbi training for hidden Markov models}\\
Technical Report 07-01 \\ 
School of Mathematical Sciences,
Nottingham University, UK}
\vskip 1\baselineskip
\author{{\Large J\"uri Lember}\\\\Tartu University, Liivi 2-507, Tartu 50409, Estonia;
jyril@ut.ee\\\\
{\Large Alexey Koloydenko}$^*$\\\\
School of Mathematical Sciences. Division of Statistics.\\
University of Nottingham. University Park. Nottingham NG7 2RD, UK.\\
Tel: +44(0)115.951.4937, alexey.koloydenko@nottingham.ac.uk\\\\
}
\date{April, 24, 2006}
\maketitle
\bibliographystyle{plain}
\begin{abstract} To estimate the emission
parameters in hidden Markov models  one  commonly uses the EM
algorithm or its variation.  Our primary motivation, however, is 
the Philips speech recognition system wherein the EM algorithm is
replaced by the Viterbi training algorithm. Viterbi training is faster
and computationally less involved than EM, but it is also
biased and need not even be consistent. We
propose an alternative to the Viterbi training -- adjusted Viterbi
training -- that has the same order of computational complexity as
Viterbi training but gives more accurate estimators.
Elsewhere, we studied the adjusted Viterbi training for a
special case of mixtures, supporting 
the theory by  simulations. This paper proves  the adjusted Viterbi training 
to be also possible for  more general hidden Markov models.
\end{abstract} 
{\em Keywords:}
Consistency; EM algorithm; hidden  Markov  models; parameter estimation;
Viterbi  Training  
\large
\section{Introduction}
\label{sec:intro} We consider a set of  procedures to estimate the
emission parameters of a finite state hidden Markov model given
observations $x_1,\ldots,x_n$. Thus, $Y$ is a Markov chain with
(finite) state space $S$, transition matrix $\mathbb{P}=(p_{ij})$, and
initial distribution $\pi$. To every state $l\in S$ there
corresponds an emission distribution $P_l$ with density $f_l$ that
is  known up to the parametrization $f_l(x; \theta_l)$. When $Y$
reaches state $l$, an observation according to $P_l$ and
independent of everything else, is emitted.\\\\
The standard method for finding the maximum likelihood estimator
of the emission parameters $\theta_l$ is the  EM-algorithm that in
the present context is also known as the {\em Baum-Welch} or {\em
forward-backward algorithm} \cite{baumhmm, emtutorial, vanaraamat,
jelinek, tutorial, raamat}. Since the EM-algorithm can in practice
be slow and computationally expensive, one seeks reasonable
alternatives. One such alternative  is  {\em
Viterbi training} (VT). VT is used in speech recognition \cite{vanaraamat, 
philips,  raamat, Rabiner86, philips2, strom}, natural language modeling 
\cite{ochney},  image analysis \cite{gray2}, bioinformatics \cite{dna2, dna1}.
We are also motivated  by connections with constrained vector quantization \cite{gray,
gray4}. The basic idea behind VT is to replace the computationally
costly expectation (E) step of the EM-algorithm by an
appropriate maximization step with fewer and simpler
computations. In speech recognition, essentially the same training
procedure was already described by L. Rabiner {\em et al.} in
\cite{Rabiner90, Rabiner86} (see also \cite{tutorial, raamat}).
Rabiner considered this procedure as a variation of the {\em
Lloyd algorithm} used in vector quantization,  referring to
Viterbi training as the {\em segmential K-means training}. The
analogy  with the vector quantization is especially pronounced
when the underlying  chain is simply a sequence of {\em i.i.d.}
variables, observations on which are consequently  an {\em
i.i.d.} sample from a mixture distribution. For such mixture
models, VT  was also described by R. Gray {\it et al.} in
\cite{gray}, where the training algorithm was considered in the
vector quantization context under the name of {\em entropy
constrained vector quantization (ECVQ)}.
\\\\
The VT algorithm for estimation of the emission parameters of the
hidden Markov model can be described as follows. Using some initial
values for the parameters,  find  a realization
of $Y$ that maximizes the likelihood of the given
observations. Such an $n$-tuple of states is called a {\em Viterbi
alignment}. Every Viterbi alignment partitions the sample into
subsamples corresponding to the states appearing in the alignment. 
A subsample corresponding to state $l$ is regarded as an {\em i.i.d.} sample
from $P_l$ and is used to find $\hat{\mu}_l$, the maximum
likelihood estimate of $\theta_l$. These estimates are then used
to find an alignment in the next step of the training, and so on.
It can be shown that in general this procedure converges in
finitely many steps; also, it is usually much faster than  the
EM-algorithm.
\\\\
Although  VT is computationally feasible and converges fast, it
has a significant disadvantage: The obtained estimators need not
be (local) maximum likelihood estimators; moreover, they are
generally biased and inconsistent. (VT does not necessarily
increase the likelihood, it is, however, an ascent algorithm
maximizing a certain other objective function.) Despite this deficiency,
speech recognition experiments do not show any significant
degradation of the recognition performance when the EM algorithm
is replaced by  VT. There appears no other  explanation of this
phenomena but the ``curse of complexity'' of the very
speech recognition system based on HMM.\\\\
This paper considers  VT largely outside the speech
recognition context. We  regard the VT procedure merely as a
parameter estimation method, and we address the
following question: Is it possible to adjust VT in such a way that
the adjusted training still has the attractive properties of VT (fast
convergence and computational feasibility) and that the estimators
are, at the same time, ``more accurate'' than those of the
baseline procedure? In particular, we focus on a special property
of the EM algorithm that VT lacks. This property ensures that the
true parameters are asymptotically a fixed point of the algorithm.
In other words, for a sufficiently large sample, the EM algorithm
"recognizes" the true parameters and does not change them much.
VT  does not have this property; even when the initial
parameters are correct (and $n$ is arbitrarily large), an iteration of the
training procedure would in general disturb them. {\em We thus attempt
to modify VT in order to make the true parameters an asymptotic
fixed point of the resulting algorithm.} In accomplishing this task
it is crucial to understand the asymptotic behavior of $\hat P^n_l$,
the empirical measures corresponding to the subsamples obtained
from the alignment. These measures depend on the set of
parameters used by the alignment, and in order for the true
parameters to be asymptotically fixed by (adjusted) VT, the
following must hold: If $\hat P_l^n$ is obtained by the alignment with
the true parameters, and $n$ is sufficiently large, then
$\hat{\mu}_l$, the estimator obtained from $P_l^n$, must be close
to the true parameters. The latter would hold if
\begin{equation}\label{uju}
\hat P_l^n\Rightarrow P_l,\quad{\rm a.s.}
\end{equation}
 and if the estimators  $\hat{\mu}_l$ were continuous\footnote{Loosely speaking, the requirement is
that $\hat{\mu}_l$ is {\em consistent}.} at $P_l$ with respect to
the convergence in \eqref{uju}. The reason why VT does not enjoy
the desired fixed point property is, however, different and is
that \eqref{uju} need not in generally hold. Hence, in order to
improve VT in the aforementioned sense, one needs to study the
asymptotics of the measures $\hat P_l^n$. First of all, one needs to
know if there exist any limiting probability measures $Q_l$ such
that for every $l\in S$
\begin{equation}\label{ujuI}
\hat P_l^n\Rightarrow Q_l,\quad l\in S \quad{\rm a.s.}.
\end{equation}
If such limiting measures exist, then under the above continuity
assumption, the estimators $\hat{\mu}_l$ will converge to $\mu_l$,
where
\begin{equation*}\label{aziz}
\mu_l=\arg\max_{\theta_l}\int \ln f_l(x;\theta_l)Q_l(dx).
\end{equation*}
Taking now into account the difference between $\mu_l$ and the
true parameter, the appropriate adjustment of VT, so called
adjusted Viterbi training (VA) can be defined
(\S\ref{sec:viterbi_alignment_training}).
\\\\
Let us briefly introduce the main ideas of the paper. Let $X$ stand for 
the observable subprocess of our HMM. The core of the problem 
is that the alignment is not defined for infinite sequences of
observations, hence the asymptotic behavior of $\hat P_l^n$ is not
straightforward. To handle this, we introduce the notion
of {\em barrier} (\S\ref{sec:nodes_barriers}). Roughly, a
barrier is a block of observations from a predefined cylinder set
that has the following property: Alignments for contiguous
subsequences of observations enclosed by barriers  can be performed
independently of the observations outside these enclosing
barriers. A simple example of a barrier is an observation $z$ that
determines, or indicates, the underlying state: $x_u=z \Rightarrow y_u=l$, 
$u\leq n$. This happens if  $z$ can only be emitted from $l$. 
This also implies that any Viterbi alignment has to go 
through $l$ at time $u$, and in particular, 
the alignment up to $u$ does not depend on the observations after time $u$. 
If a realization had many such special $z$'s,  then
the alignment could be obtained piecewise, gluing together subalignments
each for each segment enclosed by two consecutive $z$'s. 
\\\\
Barriers are a generalization of this concept. A
barrier is characterized by containing a special observation
termed  a  {\em node} (of order $r\ge 0$). Suppose a barrier is observed
with $x_u$ being its node. The node 
guarantees the existence of state $l$ such that any alignment  
goes through $l$ at time $u$ independently of the observations 
outside the barrier.\\\\
Lemma \ref{neljas} states  (under certain assumptions) the
existence of a special path, or a block, of $Y$ states such 
that, first, the path itself occurs with a positive probability, 
and second, the (conditional) probability of it emitting a barrier 
is positive. 
Hence, by ergodicity of the full HMM process, {\em almost every} sequence of 
observations has infinitely many barriers emitted from this special block.
Next, we introduce random times $\tau_i$'s at which such nodes are emitted.
Note that $\tau_i$'s are unobservable: We do observe the barriers but without knowing 
whether or not the underlying MC is going  through that special block at the same time. 
It is, however, not difficult to see that the times $T_i=\tau_i-\tau_{i-1}$ are
{\em renewal times}, and  furthermore, the process $X$ is {\it regenerative}
with respect to the times $\tau_i$ (Proposition \ref{reg}).\\\\
Recall that almost every sequence of observations has infinitely many
barriers and that every barrier contains a node. For a generic such sequence,
let $u_i$ be the  times of its nodes. Note that $u_i$-s are observable and
that also every for all $j=1,2,\ldots$, 
$\tau_j=u_i$ for some $i\geq j$ (there may be more nodes than those emitted 
from the special block). Using these $u_i$'s as dividers, we
define infinite alignment piecewise (Definition \ref{l6pmatu}). 
Formally we have defined a mapping $v: {\cal X}^{\infty}\to {S}^{\infty}$, 
where ${\cal X}^{\infty}$ is the set of all possible observation sequences, 
and ${S}^{\infty}$ is the set of all possible state-sequences. Hence,
$V=v(X)$ is a well defined {\it alignment process}. We consider
the two-dimensional process $Z:=(X,V)$, and we note that this
process is also regenerative with respect to $\tau_i$'s. We now
define empirical measures $\hat Q^n_l$ that are based on the first $n$
elements of $Z$ (Definition \ref{emp2}).  Using the
regenerativity, it is not hard to show that there exists a limit
measure $Q_l$ such that $\hat Q_l^n\Rightarrow Q_l$, a.s. and
$\hat P_l^n\Rightarrow Q_l$ (Theorem \ref{saddam}). {\em This is the main
result of the paper.}
\\\\
To implement VA in practice, a closed form of $Q_l$ (or ${\hat\mu}_l$) as a 
function of the true parameters is necessary. The
measures $Q_l$ depend on both the transition and the
emission parameters, and  computing $Q_l$ can be very difficult.
However, in the special case of mixture models, the measures $Q_l$
are easier to find.  In \cite{AVT1}, VA is described for the
mixture case. The simulations in \cite{AVT1, AVT3} verify that VA
indeed recovers the asymptotic fixed point property. Also, since the
appropriate adjustment function does not depend on the data, 
each iteration of VA enjoys the same order of computational complexity 
(in terms of the sample size) as the baseline VT. Moreover, for commonly 
used mixtures, such as, for example mixtures of multivariate normal distributions
with unknown means and known covariances, the adjustment function
is available in a closed form (requiring integration with the
mixture densities). Depending on the dimension of the emission,
the number of components, and on the available computational
resources, one can vary the accuracy of  the adjustment.  We
reiterate that, unlike the computations of the EM algorithm,
computations of our adjustment do not involve evaluation and
subsequent summation of the mixture density at every data point.
Also, instead of calculating the measures $Q_l$ exactly, one can
easily simulate them producing in effect a stochastic version
of VA. Although simulations do require extra computations, 
the overall complexity of the stochastically adjusted VT can still 
be considerably lower than that of EM, but this, of course, requires
further investigation.
\section{Adjusted Viterbi training}
In this section, we define the adjusted Viterbi training and we
state the main question of the paper. We begin with the formal
definition of the model.
\subsection{The model}
Let $Y$ be a Markov chain with finite state space
$S=\{1,\ldots,K\}$. We assume that $Y$ is irreducible and
aperiodic with transition matrix $\mathbb{P}=(p_{ij})$ and  initial
distribution $\pi$ that is also the stationary distribution of
$Y$.
We consider a hidden Markov model (HMM), in which to every state
$l\in S$ there corresponds an {\it emission distribution} $P_l$ on
(${\cal X}, {\cal B}$). We assume ${\cal X}$ and ${\cal B}$ are
a separable metric space and the corresponding Borel
$\sigma$-algebra, respectively. Let $f_l$ be a density function of $P_l$
with respect to a certain dominating measure $\lambda$ on $({\cal X},{\cal B})$. 
Two most important concrete examples are $({\mathbb{R}^d},{\cal B})$ with
 Lebesgue measure and discrete spaces with 
the counting measure. We define support of $P_l$ as the interesection 
of all closed sets of probability 1 under $P_l$, and denote such supports by $G_l$. 
\\\\
In our model, to any realization $y_1,y_2,\ldots $ of $Y$ there
corresponds a sequence of independent random variables,
$X_1,X_2,\ldots $, where $X_n$ has the distribution $P_{y_n}$. We
do not know the realizations $y_n$ (the Markov chain $Y$ is
hidden), as we only observe the process $X=X_1,X_2,\ldots$, or,
more formally:
\begin{definition}\label{def:HMM} We say that the stochastic process $X$ is a
hidden Markov model if there is a (measurable) function $h$ such
that for each $n$,
\begin{equation}\label{def}
X_n=h(Y_n,e_n),\quad \text{where} \quad e_1,e_2,\ldots \quad
\text{ are i.i.d. and independent of  } Y.
\end{equation}
\end{definition}
Hence, the emission distribution $P_l$ is the distribution of
$h(l,e_n)$. The distribution of $X$ is completely determined by
the chain parameters $(P,\pi)$ and the emission distributions
$P_l,$ $l\in S$. Moreover, the processes  $Y$ and $X$  have the
following properties:
\begin{itemize}
    \item given $Y_n$, the observation $X_n$ is independent of $Y_m$, $m\ne n$.
    Thus, the conditional distribution of $X_n$ given $Y_1,Y_2\ldots$ depends on $Y_n$ only;
    \item the conditional distribution of $X_n$ given $Y_n$ depends  only on the state of $Y_n$ and not on $n$;
    \item given $Y_1,\ldots, Y_n$, the random variables $X_1,\ldots, X_n$ are independent.
\end{itemize}
The process $X$ is also  mixing and, therefore, ergodic.
\subsection{Viterbi alignment and training}
\label{sec:viterbi_alignment_training}
Suppose we observe  $x_1,\ldots,x_n$, the first $n$ elements of
$X$. Throughout the paper, we will also use the shorter notation $x_{1\ldots n}$. 
A central concept of the paper is  the {\em Viterbi
alignment}, which is any sequence of states $q_{1\ldots n}\in S^n$
that maximizes the likelihood of observing $x_{1\ldots n}$. In
other words, the Viterbi alignment is a maximum-likelihood
estimate  of the realization of $Y_1,\ldots, Y_n$ given
$x_1,\ldots,x_n$. In the following, the Viterbi alignment will be
referred to as the {\it alignment}. We start with the formal
definition of the alignment. First note that for any sequence
$q_{1\ldots n}\in S^n$ of states and sets $B_i\in {\cal B}$ $i=1,\ldots,n$,
$${\bf P}(X_1\in B_1,\ldots, X_n\in B_n, Y_1=q_1,\ldots, Y_n=q_n)=
{\bf P}(Y_1=q_1,\ldots,Y_n=q_n)\prod_{i=1}^n\int_{B_i}f_{q_i}d\lambda,$$
and define $\Lambda(q_1,\ldots,q_n;x_1,\ldots,x_n)$ to be the
likelihood function:
\begin{align*}
\Lambda(q_{1\ldots n}; x_{1\ldots n})\stackrel{\mathrm{def}}{=}{\bf
P}(Y_i=q_i,~i=1,\ldots,n)\prod_{i=1}^nf_{q_i}(x_i).
\end{align*}
\begin{definition}\label{def:alignment}For each  $n\ge 1$, let the set of all the
 alignments be defined as follows:
\begin{equation}\label{alignment}
  \V(x_{1\ldots n})= \{v\in S^n:~
    \forall w\in S^n~\Lambda(v;x_{1\ldots n})\ge  \Lambda(w;x_{1\ldots n})\}.
\end{equation}
Any map $v:{\cal X}^n\mapsto
\V(x_{1\ldots n})$ as well as any element $v\in \V(x_1,\ldots,x_n)$ will also be 
called  an alignment.
\end{definition}
Note that alignments require the knowledge of all the parameters of $X$: $(\pi, P)$ and 
$P_l$ $\forall l\in S$.\\\\
Throughout the paper we assume that  the sample $x_{1\ldots _n}$
is generated by an HMM with transition parameters $(\pi,\mathbb{P})$ and
with the emission distributions $f_i(x; \theta^*_l)$, where
$\theta^*=(\theta^*_1,\ldots,\theta^*_K)$ are the unknown true
parameters. We assume that the transition parameters $\mathbb{P}$ and $\pi$
are known, but the emission densities are known only up to the
parametrization $f_l(\cdot;\theta_l)$, $\theta_l\in \Theta_l$. 
A straightforward generalization to the case when $\psi=(\mathbb{P},\theta^*)$,
all of the  free parameters, are unknown, can be found in \cite{AVT4}. 
In the present case, the likelihood function $\Lambda$ as well as the set of
alignments $\V$ can be viewed as a function of $\theta$. In the
following, we shall write $\V_{\theta}$ for the set of alignments
using the parameters $\theta$. Also, unless explicitly specified,
$v_{\theta}\in \V_{\theta}$ will denote an arbitrary element of
$\V_{\theta}$.
\\\\
The classical method for computing MLE of $\theta^*$ is the
EM algorithm. However, if the dimension of $X$ is high, $n$ is big
and $f_i$'s are complex,  then  EM can be (and often is)
computationally involved.  For this reason, a shortcut, the
so-called {\it Viterbi training} is used. The Viterbi training
replaces the computationally expensive expectation (E-)step by an
appropriate  maximization step that is based on the alignment, and
is generally computationally cheaper in practice than the
expectation. We now describe the Viterbi training in the HMM case.
\begin{center}\underline{Viterbi training}\end{center}
\begin{enumerate}
\item Choose an initial value $\theta^o=(\theta^o_1,\ldots,\theta^o_K)$.
\item Given $\theta^j$, obtain alignment $$v_{\theta^j}(x_{1\ldots n})=v_{1\ldots n}$$
and partition the sample $x_1,\ldots, x_n$ into $K$ sub-samples,
where the observation $x_k$ belongs to the $l^{th}$ subsample if
and only if $v_k=l$. Equivalently, we define (at most) $K$  empirical
measures
\begin{equation}\label{emp}
\hat P_l^n(A;\theta^j,x_{1\ldots n})\stackrel{\mathrm{def}}{=}{\sum_{i=1}^n I_{A\times
l}(x_i,v_i)\over \sum_{i=1}^n I_l(v_i)},\quad A\in {\cal B}, \quad
l\in S.
\end{equation}
\item For every sub-sample find MLE given by:
\begin{equation}\label{mle}
\hat{\mu_l}^n(\theta^j,x_{1\ldots n})=\arg\max_{\theta_l\in \Theta_l}
\int \ln f_l(\theta_l,x)\hat P^n_l(dx;\theta^j,x_{1\ldots n}),
\end{equation}
and take $$\theta^{j+1}_l=\hat{\mu_l}(\theta^j,x_{1\ldots n}),\quad
l\in S.$$ If for some $l\in S$  $v_i\ne l$ for any $i=1,\ldots, n$
($l^{th}$ subsample is empty), then the empirical measure $\hat
P_l^n$ is formally undefined, in which  case we take
$\theta_l^{j+1}=\theta^j_l$. We will  be omitting this exceptional
case from now on.
\end{enumerate} 
The Viterbi training can be
interpreted as follows. Suppose that at some step $j$,
$\theta^j=\theta^*$ and hence $v_{\theta^j}$ is obtained using the
true parameters. The training is then  based on the assumption
that the alignment  $v_{1\ldots n}=v(x_{1\ldots n})$ is
correct, i.e., $v_i=Y_i$, $i=1,\ldots,n$. In this case, the
empirical measures $\hat P_l^n$, $l\in S$ would be obtained from
the i.i.d. sample generated from $P_l(\theta^*)$,  and the MLE
$\hat{\mu_l}^n(\theta^*,X_{1\ldots n})$ would be a natural estimator
to use. Clearly, under these assumptions $\hat
P^n_l(\theta^*,X_{1\ldots n})\Rightarrow P_l(\theta^*)$ a.s.
("$\Rightarrow$"  denotes the weak convergence of probability
measures) and, provided that $\{f_l(\cdot;\theta):\theta\in
\Theta_l\}$ is a $P_l$-Glivenko-Cantelli class and $\Theta_l$ is
equipped with some suitable metric, $\lim_{n\to\infty}
\hat{\mu}^n_l(\theta^*,X_{1\ldots n})= \theta^*_l$ a.s.  Hence, if $n$
is sufficiently large, then  $\hat P_l^n\approx P_l$ and
$$\theta_l^{j+1}=\hat{\mu}^n_l(\theta^*,x_{1\ldots n}) \approx
\theta^*_l=\theta^j_l,\quad \forall l$$ i.e. $\theta^j=\theta^*$
would be (approximately) a fixed point of the training algorithm.
\\\\
A weak point of the foregoing argument is that the alignment in
general is not correct even when the parameters used to find it,
are. So, generally $v_i\ne Y_i$. In particular, this implies that
the empirical measures $\hat{P}^n_l(\theta^*,x_{1\ldots n})$ are not
obtained from an i.i.d. sample from $P_l(\theta^*)$. Hence,
we have no reason to believe that $\hat
P_l^n(\theta^*,X_{1\ldots n})\Rightarrow P_l(\theta^*)$ a.s. and
$\lim_{n\to\infty}\hat{\mu}^n_l(\theta^*,X_{1\ldots n})= \theta^*_l$
a.s. Moreover, we do not even know whether the sequences of
empirical measures $\{\hat P_l^n(\theta^*,X_{1\ldots n})\}$ and MLE
estimators $\{\hat{\mu}^n_l(\theta^*,X_{1\ldots n})\}$ converge (a.s.)
at all.
\\\\
In this paper, we prove the existence of probability measures
$Q_l(\theta,\theta^*)$ (that depend on both $\theta$, the parameters used to obtain the alignments, 
as well as $\theta^*$, the true parameters used to generate the training samples),  
such that for every $l\in S$,
\begin{equation}\label{koondumineI}
\hat P_l^n(\theta^*,X_{1\ldots n})\Rightarrow Q_l(\theta^*,\theta^*),\quad
\text{a.s.}
\end{equation}
for a special choice of the alignment $v_{\theta^*}\in
\V_{\theta^*}$ used to define $\hat P_l^n(\theta^*,x_{1\ldots n})$. (In
fact, adding certain mild restrictions on $P_l$, one can eliminate
the dependence of the above result  on the particular choice of
the alignment $v_{\theta^*}\in \V_{\theta^*}$.) We will also be writing 
$Q_l(\theta)$ for $Q_l(\theta,\theta)$ whenever appropriate.

Suppose also that
the parameter space $\Theta_l$ is equipped with some metric. Then,
under certain consistency assumptions on classes  ${\cal
F}_l=\{f_l(\cdot;\theta_l):\theta_l\in \Theta_l\}$, the convergence
\begin{equation}\label{koondumineII}
\lim_{n\to\infty}{\hat \mu}_l(\theta^*,X_{1\ldots n})
=\mu_l(\theta^*)\quad \text{a.s.}
\end{equation}
can be deduced from \eqref{koondumineI}, where
\begin{equation}\label{koondumineII2}
\mu_l(\theta)\stackrel{\mathrm def}{=}\arg\max_{\theta'_l\in \Theta_l}\int
  \ln f_l(x; \theta'_l)Q_l(dx;\theta).
\end{equation}
We also show that  in general, for the baseline Viterbi training
$Q_l(\theta^*)\ne P_l(\theta^*)$, implying
$\mu_l(\theta^*)\ne \theta^*_l$. In an attempt to reduce the bias
$\theta_l^*-\mu_l(\theta^*)$,  we next propose the {\it adjusted
Viterbi training}.
\\\\
Suppose \eqref{koondumineI} and \eqref{koondumineII} hold. 
Based on \eqref{koondumineII2}, we now consider the mapping
\begin{equation}\label{mapping}
\theta \mapsto \mu_l(\theta),\quad l=1,\ldots, K,
\end{equation}
The calculation of $\mu_l(\theta)$ can be rather involved and it
may have no closed form. Nonetheless, since this function is
independent of the sample,  we can define the following correction
for the bias:
\begin{equation}\label{mapII}
\Delta_l(\theta)=\theta_l-\mu_l(\theta),\quad l=1,\ldots, K.
\end{equation}
Thus, the adjusted Viterbi training emerges as follows:
\begin{center}\underline{Adjusted Viterbi training}\end{center}
\begin{enumerate}
\item Choose an initial value $\theta^0=(\theta^0_1,\ldots,\theta^0_K)$.
\item Given $\theta^j$, perform the alignment and define $K$  empirical measures 
$\hat P_l^n(\theta^j,\theta^*)$ as in \eqref{emp}.
\item For every $\hat P_l^n(\theta^j,x_{1\ldots n})$,
find $\hat{\mu}^n_l(\theta^j,x_{1\ldots n})$ as in \eqref{mle}.
\item For each $l$, define
$$\theta^{j+1}_l=\hat{\mu}^n_l(\theta^j,x_{1\ldots n})+\Delta_l(\theta^j),$$
where $\Delta_l$ as in \eqref{mapII}.
\end{enumerate}
Note that, as desired, for a sufficiently large $n$, the adjusted
training algorithm  has $\theta^*$ as its (approximately) fixed
point: Indeed, suppose $\theta^j=\theta^*$, then 
$\hat{\mu}^n_l(\theta^j,x_{1\ldots n})=\hat{\mu}^n_l(\theta^*,x_{1\ldots n})$. Recalling
\eqref{koondumineII}, it then follows that
$\hat{\mu}^n_l(\theta^*,x_{1\ldots n})\approx
\mu_l(\theta^*)=\mu_l(\theta^j)$, for all $l\in S$. Hence,
\begin{equation}
  \label{eq:approx}
\theta_l^{j+1}=\hat{\mu}_l(\theta^*,x_{1\ldots n})+\Delta_l(\theta^*)
\approx \mu_l(\theta^*)+\Delta_l(\theta^*)=
\theta_l^*=\theta^j,\quad l\in S.
\end{equation}
In \cite{AVT1}, we considered i.i.d. sequence $X_1,X_2,\ldots$,
where $X_1$ has a mixture distribution, i.e. the density of $X_1$
is $\sum_{i=1}^K p_i f_i$. Here $p_i>0$ are the mixture weights.
Such a sequence is  an HMM with the transition matrix satisfying
$p_{ij}=p_j$ $\forall i,j$. In this particular case, the alignment
and the measures $Q_l$ are easy to find. Indeed, for any set of
parameters $\theta=(\theta_1,\ldots \theta_K)$, the alignment
$v_{\theta}$ can be obtained via a {\it Voronoi partition}
${\mathcal S}(\theta)=\{S_1(\theta),\ldots,S_K(\theta)\}$, where
\begin{align}
S_1(\theta)&=\{x: p_1f_1(x;\theta_1)\geq p_jf_j(x;\theta_j),\quad \forall j\in S\} \label{eq:S1}\\
S_l(\theta)&=\{x: p_lf_l(x;\theta_l)\geq p_jf_j(x;\theta_j),\quad
\forall j\in S\}\backslash (S_1\cup\ldots\cup S_{l-1}),\quad
l=2,\ldots, K.\label{eq:S}
\end{align}
Now, the alignment can be defined point-wise as follows:
$v_{\theta}(x_1,\ldots,x_n)=v_{\theta}(x_1)\cdots
v_{\theta}(x_n),$ where $v_{\theta}(x)=l$ if
and only if $x\in S_l(\theta)$.\\
The convergence \eqref{koondumineI} now follows immediately from
the strong law of large numbers as
$\hat{P}^n_l(\theta^*,X_{1\ldots n})\Rightarrow Q_l(\theta^*)$ a.s.,
where
$$q_l(x;\theta^*)\propto f(x;\theta^*)I_{S_l(\theta^*)}=
(\sum_ip_if_i(x;\theta^*))I_{S_l(\theta^*)},\quad l=1,\ldots, K$$
are the densities of respective $Q_l(\theta^*)$.\\\\
Thus, in  the special case of mixtures, the adjustments $\Delta_l$
are easy to calculate and the adjusted Viterbi training is easy to
implement. Simulations in \cite{AVT1} have largely supported the
expected gain in estimation accuracy due to the adjustment $\Delta$ 
with a small extra cost for computing $\Delta$. Indeed, this
extra computation does not affect the algorithm's overall computational complexity as
a function of the sample size, since $\Delta$ depends on the training sample only through 
$\theta^j$, the current value of the parameter. 
\\\\
Due to the time-dependence in the general HMM, the convergence
\eqref{koondumineI} does not  follow immediately from the law of
large numbers. However, the very concept of the adjusted Viterbi
training is based on the existence of the $Q_l$-measures. Thus, in
order to generalize this concept to an arbitrary HMM,
one has to begin with the existence of the $Q_l$-measures, which is
{\em the objective of this paper}.
\section{Nodes and barriers}
\label{sec:nodes_barriers} In this section, we present some
preliminaries that will allow us to prove the convergences
\eqref{koondumineI} and \eqref{koondumineII}. We choose to
introduce the necessary concepts gradually, building up the
general notions on special cases that we find more intuitive and
insightful. For a comprehensive introduction  to HMM's and related
topics we refer to \cite{vanaraamat, tutorial, raamat}, and  an
overview of the basic concepts related to HMM's follows below in
\S\ref{sec:va_nodes}. We then proceed to the notion
of {\em infinite (Viterbi) alignment} (\S\ref{sec:infal}),
developing on the way several auxiliary notions such as {\em nodes} and {\em
barriers}.
\\\\
Throughout the rest of this section, we will be writing $f_l$ and
$\V$ for   $f_l(\cdot;\theta^*_l)$, the true emission
distributions, and $\V_{\theta^*}$, the set of alignments with the
true parameters, respectively.
\subsection{Nodes}\label{sec:va_nodes}
\subsubsection{Preliminaries}
Let $1\le u_1<u_2<\ldots < u_k\le n$. Given any sequence
$a=(a_1,\ldots,a_n)$, write  $a_{u_1\ldots u_k}$ for $(a_{u_1},\ldots,a_{u_k})$
and define also the following objects:
\begin{align*}
  S^{l_1\ldots l_k}_{u_1\ldots u_k}(n)&\stackrel{\mathrm{def}}{=}\{v\in S^n: v_{u_1\ldots u_k}=(l_1,\ldots,l_k)\}.
\end{align*}
Next, given observations $x_{1\ldots n}$,
let us introduce the set of constrained likelihood maximizers defined below:
\begin{equation*}
  \W^l_u(x_{1 \ldots n})=\{v\in S^l_u(n):  \forall w\in S^l_u(n)~\Lambda(v; x_{1 \ldots n})\ge  \Lambda(w; x_{1\ldots n})\}.
\end{equation*}
Next, define the {\it scores}
\begin{equation}\label{eq:delta}
\delta_u(l)\stackrel{\mathrm{def}}{=}\max_{q\in S^l_u(u)}\Lambda(q; x_{1\ldots u}),
\end{equation}
and notice the trivial case: $\delta_l(1)=\pi_lf_l(x_1)$. Then, we
have the following recursion (see, for example, \cite{raamat}):
\begin{equation}\label{rec}
\delta_{u+1}(j)=\max_{l\in S}(\delta_u(l)p_{lj})f_j(x_{u+1}).
\end{equation}
The Viterbi training as well as the Viterbi  alignment inherit
their names from the {\it Viterbi algorithm},  which is a dynamic
programming algorithm for finding $v\in$ $\V(x_{1\ldots n})$. In
fact, due to potential non-uniqueness of such $v$, the Viterbi
algorithm requires a selection rule as part of its specification.
However, for our purposes we will often be manipulating by
$\V(x_{1\ldots n})$ as opposed to by individual  $v$'s, in which
case we will also be identifying the entire $\V(x_{1\ldots n})$
with the output of the  algorithm. This algorithm is based on
recursion \eqref{rec} and on the following relations:
\begin{align}\label{vaike}
 t(u,j)&=\{l\in S: \forall i\in S~\delta_u(l)p_{lj}\ge \delta_u(i)p_{ij}\}, \quad u=1,\ldots,n-1,\\
\label{viimane} \V(x_{1\ldots n})&=\{v\in S^n:\delta_n(v_n)\ge
\delta_n(i)~
                    \forall i\in S, v_u\in t(u,v_{u+1})~1\le u<n\}.
\end{align}
It can also be shown that
\begin{equation}
  \label{eq:constrainedmle}
  \W^l_n(x_{1 \ldots n})=\{v\in S_n^l(n):~v_u\in t(u,v_{u+1})~u=1,\ldots,n-1\}.
\end{equation}
We shall also need the following notation:
\begin{align*}
  \V^{l_1\ldots l_k}_{u_1\ldots u_k}(x_{1\ldots n})&=\{v\in \V(x_{1 \ldots n}): v_{u_iu_{i+1}\ldots u_k}=(l_1,\ldots,l_k)\}.
\end{align*}
and will use subscript $(l)$ to refer to alignments obtained using $(p_{li})_{i\in S}$ (instead of $\pi$) 
as the initial distribution. Thus $\V_{(l)}(x_{1\ldots n})$ stands for the set of all such alignments, and
\begin{align*}
  \V_{(l)u_1\ldots u_k}^{\,\,\,\,\,\,l_1\ldots l_k}(x_{1\ldots n})&=
  \{v\in \V_{(l)}(x_{1 \ldots n}): v_{u_iu_{i+1}\ldots
  u_k}=(l_1,\ldots,l_k)\}.
\end{align*}
Similarly,  $\W_{(l)u_1\ldots u_k}^{\,\,\,\,\,\,l_1\ldots l_k}(x_{1\ldots n})$ will be referring
to the constrained alignments obtained using $(p_{li})_{i\in S}$ 
as the initial distribution.
%
The following Proposition and Corollary reveal more structure of
the alignments.
\begin{proposition}\label{joop0} Let $1\le u\le n$, then
\begin{align}
\W^l_u(x_{1\ldots n})&=\W_u^l(x_{1\ldots u})\times \V_{(l)}(x_{u+1\ldots n}),\label{joop1}\\
\V^l_u(x_{1\ldots n})\neq \emptyset &\Rightarrow\V^l_u(x_{1\ldots
n})=\W^l_u(x_{1\ldots n}).\label{joop2}
\end{align}
\end{proposition}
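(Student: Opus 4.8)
The plan is to unwind the definitions of $\Lambda$, $\delta_u$, and the constrained maximizer set $\W^l_u$, and show that maximizing the likelihood over all of $S^n$ subject to $v_u=l$ factors into an ``up to time $u$'' part and an ``after time $u$'' part. First I would write, for any $v\in S^l_u(n)$,
\[
\Lambda(v;x_{1\ldots n})={\bf P}(Y_{1\ldots u}=v_{1\ldots u})\Bigl(\prod_{i=1}^u f_{v_i}(x_i)\Bigr)\cdot {\bf P}(Y_{u+1\ldots n}=v_{u+1\ldots n}\mid Y_u=l)\Bigl(\prod_{i=u+1}^n f_{v_i}(x_i)\Bigr),
\]
using the Markov property of $Y$ and the conditional independence of the emissions. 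The first factor is exactly $\Lambda(v_{1\ldots u};x_{1\ldots u})$ with $v_u=l$ fixed, and the second factor is the likelihood of $x_{u+1\ldots n}$ under the alignment $v_{u+1\ldots n}$ computed with initial distribution $(p_{li})_{i\in S}$, i.e. the quantity optimized over $\V_{(l)}(x_{u+1\ldots n})$. Since the two factors depend on disjoint blocks of coordinates of $v$ (the only link, the value $v_u=l$, is held fixed), the joint maximum over $v\in S^l_u(n)$ is attained exactly when the first block lies in $\W^l_u(x_{1\ldots u})$ and the second block lies in $\V_{(l)}(x_{u+1\ldots n})$. This gives \eqref{joop1}. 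I should be a little careful with the degenerate cases $u=1$ and $u=n$, where one of the two blocks is empty, and with the case where $\delta_u(l)=0$ (state $l$ unreachable or $f_l(x_u)=0$), but these are checked directly.

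For \eqref{joop2}: suppose $\V^l_u(x_{1\ldots n})\neq\emptyset$, i.e. some global alignment passes through $l$ at time $u$. I would first note $\V^l_u(x_{1\ldots n})\subseteq\W^l_u(x_{1\ldots n})$ trivially, since a global maximizer constrained to $v_u=l$ is in particular a maximizer among those $v$ with $v_u=l$. For the reverse inclusion, the hypothesis says $\max_{v\in S^l_u(n)}\Lambda(v;x_{1\ldots n})$ equals the unconstrained maximum $\max_{w\in S^n}\Lambda(w;x_{1\ldots n})$ — because by assumption at least one unconstrained optimizer already satisfies $v_u=l$. Hence any $v\in\W^l_u(x_{1\ldots n})$ achieves the unconstrained maximum and satisfies $v_u=l$, so $v\in\V^l_u(x_{1\ldots n})$. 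This is the short half once the observation about the two maxima coinciding is made.

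The main obstacle is purely bookkeeping rather than conceptual: making the factorization in the first step airtight, in particular getting the ``initial distribution shifts to $(p_{li})_{i\in S}$'' claim exactly right, and handling the boundary indices $u=1$, $u=n$ and zero-likelihood states without the notation collapsing. I expect no real difficulty beyond that; the identity \eqref{eq:constrainedmle} and recursion \eqref{rec} can be invoked to cross-check that $\W^l_u(x_{1\ldots u})$ is described by the backtracking sets $t(\cdot,\cdot)$ up to time $u$, which makes the ``first block'' side transparent, and the ``second block'' side is just the definition of $\V_{(l)}$.
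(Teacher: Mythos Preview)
Your proposal is correct and follows essentially the same approach as the paper: factorize $\Lambda$ at time $u$ via the Markov property into $\Lambda(v_{1\ldots u};x_{1\ldots u})\cdot\Lambda(v_{u+1\ldots n};x_{u+1\ldots n}\mid l)$, then observe that with $v_u=l$ fixed the two factors can be maximized independently to get \eqref{joop1}, and derive \eqref{joop2} directly from the definitions. Your treatment of \eqref{joop2} is in fact more explicit than the paper's (which simply says it ``follows immediately from the definitions of the involved sets''), and your attention to the boundary cases $u=1$, $u=n$ and $\delta_u(l)=0$ is extra care the paper does not spell out.
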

\begin{proof}
The Markov property implies: for any $q=(q_1,\ldots,q_n)$.
$$\Lambda(q; x_{1\ldots n})=\Lambda(q_{1\ldots u}; x_{1\ldots u})\cdot \Lambda(q_{u+1\ldots n};x_{u+1 \ldots n}|q_u),$$
where
$$\Lambda(q_{u+1\ldots n}; x_{u+1\ldots n}|l)={\bf P}(Y_{u+1\ldots n}=q_{u+1\ldots n}|Y_u=l)\prod_{i=u+1}^nf_{q_i}(x_i).$$
Thus, \eqref{joop1} follows from the  equivalence between
maximizing  $\Lambda(q; x_{1\ldots n})$ over $S^l_u(n)$ on one
hand, and maximizing $\Lambda(q_{1\ldots u}; x_{1\ldots u})$ and
$\Lambda(q_{u+1\ldots n}; x_{u\ldots n}|l)$ over $S^{n-u}$ and
$S^l_u(n)$, respectively and independently, on the other.
\eqref{joop2} follows immediately from the definitions of the
involved sets.
\end{proof}
\begin{corollary}\label{joop3}
\begin{equation}\label{eq:decomp}
\V^l_u(x_{1\ldots n})\neq \emptyset \text{ and } \V^l_u(x_{1\ldots
u})\neq \emptyset \Rightarrow
 \V_u^l(x_{1 \ldots n})= \V^l_u(x_{1\ldots u})\times \V_{(l)}(x_{u+1\ldots n}).
\end{equation}
\end{corollary}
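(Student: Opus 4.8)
The plan is to obtain Corollary~\ref{joop3} as an immediate consequence of Proposition~\ref{joop0}, invoked at two different time scales. First, since $\V^l_u(x_{1\ldots n})\neq\emptyset$, implication \eqref{joop2} gives $\V^l_u(x_{1\ldots n})=\W^l_u(x_{1\ldots n})$, and then \eqref{joop1} rewrites the right-hand side as $\W^l_u(x_{1\ldots u})\times\V_{(l)}(x_{u+1\ldots n})$. So all that remains is to replace the factor $\W^l_u(x_{1\ldots u})$ by $\V^l_u(x_{1\ldots u})$.

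For that, I would apply \eqref{joop2} once more, this time with $n$ taken to equal $u$: in that case $S^l_u(u)$ is the full range, so $\W^l_u(x_{1\ldots u})$ is precisely the set appearing as the first factor in \eqref{joop1}, and the second hypothesis $\V^l_u(x_{1\ldots u})\neq\emptyset$ yields $\V^l_u(x_{1\ldots u})=\W^l_u(x_{1\ldots u})$. Substituting this equality into the identity from the previous step gives $\V^l_u(x_{1\ldots n})=\V^l_u(x_{1\ldots u})\times\V_{(l)}(x_{u+1\ldots n})$, which is the claim.

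There is essentially no hard step here; the only points needing a moment's care are bookkeeping ones. One should check that the ``$\W$'' occurring in \eqref{joop1} and the ``$\W$'' produced by \eqref{joop2} at scale $u$ literally denote the same object — they do, straight from the definition of $\W^l_u$ — and one should note the degenerate case $u=n$, where $x_{u+1\ldots n}$ is the empty string, $\V_{(l)}(x_{u+1\ldots n})$ is the one-point set of empty sequences, and both sides of the asserted identity collapse to $\V^l_n(x_{1\ldots n})$. A single sentence disposes of that case, so the whole argument is a short paste-together of Proposition~\ref{joop0}'s two parts.
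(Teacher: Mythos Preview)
Your proof is correct and follows essentially the same route as the paper: apply \eqref{joop2} twice (once at scale $n$, once at scale $u$) to turn both $\V^l_u$'s into $\W^l_u$'s, then invoke \eqref{joop1} to supply the product decomposition. The paper's version is just the same two-line argument without your extra bookkeeping remarks on the $u=n$ edge case.
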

\begin{proof}
The hypotheses of \eqref{eq:decomp} together with \eqref{joop2}
imply $\V^l_u(x_{1\ldots n})=\W^l_u(x_{1\ldots n})$ and
$\V^l_u(x_{1\ldots u})=\W^l_u(x_{1\ldots u})$.  The latter
statements and \eqref{joop1} yield the claim.
\end{proof}
\subsubsection{Nodes and alignment}
We aim at extending the notion of  alignment for infinite HMM's.
In order to fulfil this objective, we investigate properties of
finite alignments (e.g. Propositions \ref{joop0}, and
\ref{nodeprop}) and identify necessary ingredients (e.g. ``node'',
 and ``barrier'') for the development of the extended theory.
We start with the notion of nodes:
\begin{definition}\label{def:node} 
For $1\le u< n$, we call $x_u$ an $l$-node if
\begin{equation}\label{krit}
\delta_u(l)p_{lj}\geq  \delta_u(i)p_{ij},\quad \forall i,j\in S.
\end{equation}
We also say that $x_u$ is a node if it is an $l$-node for some
$l\in S$.
\end{definition}
\begin{figure}
\begin{center}
\setlength{\unitlength}{0.00068in}
\begingroup\makeatletter\ifx\SetFigFont\undefined%
\gdef\SetFigFont#1#2#3#4#5{%
  \reset@font\fontsize{#1}{#2pt}%
  \fontfamily{#3}\fontseries{#4}\fontshape{#5}%
  \selectfont}%
\fi\endgroup%
{\renewcommand{\dashlinestretch}{30}
\begin{picture}(9205,3808)(0,-10)
\put(450,2503){\ellipse{150}{150}}
\put(450,1603){\ellipse{150}{150}}
\put(450,703){\ellipse{150}{150}}
\put(4950,1603){\ellipse{300}{300}}
\put(1350,2503){\ellipse{150}{150}}
\put(1350,1603){\ellipse{150}{150}}
\put(1350,703){\ellipse{150}{150}}
\put(2250,2503){\ellipse{150}{150}}
\put(2250,1603){\ellipse{150}{150}}
\put(2250,703){\ellipse{150}{150}}
\put(4050,2503){\ellipse{150}{150}}
\put(4050,1603){\ellipse{150}{150}}
\put(4050,703){\ellipse{150}{150}}
\put(4950,703){\ellipse{150}{150}}
\put(4950,2503){\ellipse{150}{150}}
\put(5850,703){\ellipse{150}{150}}
\put(5850,1603){\ellipse{150}{150}}
\put(5850,2503){\ellipse{150}{150}}
\put(7650,703){\ellipse{150}{150}}
\put(7650,1603){\ellipse{150}{150}}
\put(7650,2503){\ellipse{150}{150}}
\put(8550,1603){\ellipse{150}{150}}
\put(8550,2503){\ellipse{150}{150}}
\put(8550,703){\ellipse{150}{150}} \path(450,2803)(8850,2803)
\path(525,1678)(1350,2503) \dashline{60.000}(4950,1603)(5850,1603)
\dashline{60.000}(7650,2503)(8550,2503)
\dashline{60.000}(7650,703)(8550,703) \path(7350,1003)(7650,703)
\path(7650,703)(8550,1603) \path(4050,1603)(4950,1603)
\path(2250,1603)(2550,1603) \path(450,403)(8850,403)
\dashline{60.000}(4050,1603)(4950,2503)
\drawline(4950,2503)(4950,2503) \path(4950,1603)(5850,2503)
\dashline{60.000}(3750,703)(4050,703) \path(1350,2503)(2250,1603)
\path(5850,2503)(6150,2203) \thicklines
\dottedline{135}(2850,3103)(3450,3103) \thinlines
\drawline(4650,3103)(4650,3103)
\dashline{60.000}(495,763)(1350,1603)
\dashline{60.000}(1350,1603)(2250,703)
\dashline{60.000}(1350,2503)(2250,2503)
\dashline{60.000}(2250,703)(2550,703)
\dashline{60.000}(450,2503)(1350,703) \thicklines
\dottedline{135}(2880,1603)(3480,1603)
\dottedline{135}(6690,3088)(7290,3088)
\dottedline{135}(6660,1903)(7260,1903) \thinlines
\dashline{60.000}(3750,2203)(4050,2503)
\dashline{60.000}(4950,703)(4050,2503)
\dashline{60.000}(3750,1603)(4050,1603)
\dashline{60.000}(4950,1603)(5850,703)
\put(600,
2428){\makebox(0,0)[lb]{\smash{{{\SetFigFont{12}{14.4}{\rmdefault}{\mddefault}{\updefault}$\delta_1(1)$}}}}}
\put(1200,2623){\makebox(0,0)[lb]{\smash{{{\SetFigFont{12}{14.4}{\rmdefault}{\mddefault}{\updefault}$\delta_2(1)$}}}}}
\put(2100,2623){\makebox(0,0)[lb]{\smash{{{\SetFigFont{12}{14.4}{\rmdefault}{\mddefault}{\updefault}$\delta_3(1)$}}}}}
\put(600,
1528){\makebox(0,0)[lb]{\smash{{{\SetFigFont{12}{14.4}{\rmdefault}{\mddefault}{\updefault}$\delta_1(2)$}}}}}
\put(1200,1723){\makebox(0,0)[lb]{\smash{{{\SetFigFont{12}{14.4}{\rmdefault}{\mddefault}{\updefault}$\delta_2(2)$}}}}}
\put(2100,1723){\makebox(0,0)[lb]{\smash{{{\SetFigFont{12}{14.4}{\rmdefault}{\mddefault}{\updefault}$\delta_3(2)$}}}}}
\put(600,
628){\makebox(0,0)[lb]{\smash{{{\SetFigFont{12}{14.4}{\rmdefault}{\mddefault}{\updefault}$\delta_1(3)$}}}}}
\put(1200,823){\makebox(0,0)[lb]{\smash{{{\SetFigFont{12}{14.4}{\rmdefault}{\mddefault}{\updefault}$\delta_2(3)$}}}}}
\put(2100,823){\makebox(0,0)[lb]{\smash{{{\SetFigFont{12}{14.4}{\rmdefault}{\mddefault}{\updefault}$\delta_3(3)$}}}}}
\put(0,2428){\makebox(0,0)[lb]{\smash{{{\SetFigFont{12}{14.4}{\rmdefault}{\mddefault}{\updefault}1}}}}}
\put(0,1528){\makebox(0,0)[lb]{\smash{{{\SetFigFont{12}{14.4}{\rmdefault}{\mddefault}{\updefault}2}}}}}
\put(0,628){\makebox(0,0)[lb]{\smash{{{\SetFigFont{12}{14.4}{\rmdefault}{\mddefault}{\updefault}3}}}}}
\put(8775,2428){\makebox(0,0)[lb]{\smash{{{\SetFigFont{12}{14.4}{\rmdefault}{\mddefault}{\updefault}$\delta_n(1)$}}}}}
\put(8775,1528){\makebox(0,0)[lb]{\smash{{{\SetFigFont{12}{14.4}{\rmdefault}{\mddefault}{\updefault}$\delta_n(2)$}}}}}
\put(8775,628){\makebox(0,0)[lb]{\smash{{{\SetFigFont{12}{14.4}{\rmdefault}{\mddefault}{\updefault}$\delta_n(3)$}}}}}
\put(0,3703){\makebox(0,0)[lb]{\smash{{{\SetFigFont{12}{14.4}{\rmdefault}{\mddefault}{\updefault}s}}}}}
\put(0,3478){\makebox(0,0)[lb]{\smash{{{\SetFigFont{12}{14.4}{\rmdefault}{\mddefault}{\updefault}t}}}}}
\put(0,3253){\makebox(0,0)[lb]{\smash{{{\SetFigFont{12}{14.4}{\rmdefault}{\mddefault}{\updefault}a}}}}}
\put(0,3028){\makebox(0,0)[lb]{\smash{{{\SetFigFont{12}{14.4}{\rmdefault}{\mddefault}{\updefault}t}}}}}
\put(0,2803){\makebox(0,0)[lb]{\smash{{{\SetFigFont{12}{14.4}{\rmdefault}{\mddefault}{\updefault}e}}}}}
\put(180,
103){\makebox(0,0)[lb]{\smash{{{\SetFigFont{12}{14.4}{\rmdefault}{\mddefault}{\updefault}$v_1=2$}}}}}
\put(1065,103){\makebox(0,0)[lb]{\smash{{{\SetFigFont{12}{14.4}{\rmdefault}{\mddefault}{\updefault}$v_2=1$}}}}}
\put(1980,103){\makebox(0,0)[lb]{\smash{{{\SetFigFont{12}{14.4}{\rmdefault}{\mddefault}{\updefault}$v_3=2$}}}}}
\put(5475,103){\makebox(0,0)[lb]{\smash{{{\SetFigFont{12}{14.4}{\rmdefault}{\mddefault}{\updefault}$v_{u+1}=1$}}}}}
\put(3600,103){\makebox(0,0)[lb]{\smash{{{\SetFigFont{12}{14.4}{\rmdefault}{\mddefault}{\updefault}$v_{u-1}=2$}}}}}
\put(7350,103){\makebox(0,0)[lb]{\smash{{{\SetFigFont{12}{14.4}{\rmdefault}{\mddefault}{\updefault}$v_{n-1}=3$}}}}}
\put(8400,103){\makebox(0,0)[lb]{\smash{{{\SetFigFont{12}{14.4}{\rmdefault}{\mddefault}{\updefault}$v_n=2$}}}}}
\put(4650,103){\makebox(0,0)[lb]{\smash{{{\SetFigFont{12}{14.4}{\rmdefault}{\mddefault}{\updefault}$v_u=2$}}}}}
\put(400,
3103){\makebox(0,0)[lb]{\smash{{{\SetFigFont{12}{14.4}{\rmdefault}{\mddefault}{\updefault}$x_1$}}}}}
\put(1300,3103){\makebox(0,0)[lb]{\smash{{{\SetFigFont{12}{14.4}{\rmdefault}{\mddefault}{\updefault}$x_2$}}}}}
\put(2200,3103){\makebox(0,0)[lb]{\smash{{{\SetFigFont{12}{14.4}{\rmdefault}{\mddefault}{\updefault}$x_3$}}}}}
\put(4000,3103){\makebox(0,0)[lb]{\smash{{{\SetFigFont{12}{14.4}{\rmdefault}{\mddefault}{\updefault}$x_{u-1}$}}}}}
\put(4900,3103){\makebox(0,0)[lb]{\smash{{{\SetFigFont{12}{14.4}{\rmdefault}{\mddefault}{\updefault}$x_u$}}}}}
\put(5800,3103){\makebox(0,0)[lb]{\smash{{{\SetFigFont{12}{14.4}{\rmdefault}{\mddefault}{\updefault}$x_{u+1}$}}}}}
\put(7600,3103){\makebox(0,0)[lb]{\smash{{{\SetFigFont{12}{14.4}{\rmdefault}{\mddefault}{\updefault}$x_{n-1}$}}}}}
\put(8500,3103){\makebox(0,0)[lb]{\smash{{{\SetFigFont{12}{14.4}{\rmdefault}{\mddefault}{\updefault}$x_n$}}}}}
\end{picture} }
\end{center}
\caption{An example of the Viterbi algorithm in action. The solid
line corresponds to the final alignment $v_{1\ldots n}$. The
dashed  links are of the form $(k,l)-(k+1,j)$ with $l\in t(k,j)$
and are not part of the final alignment.  E.g., $(1,3)-(2,2)-(3,3)$ is
because $3\in t(1,2)$, $2\in t(2,3)$. The observation $x_u$ is a
$2$-node, since we have $2\in t(u,j)$  $\forall j\in S$.  We also
see that $v_{1\ldots u}$ is {\em fixed}.}
\label{fig:viterbi_algorithm}
\end{figure}
Figure \ref{fig:viterbi_algorithm} illustrates the newly
introduced notion.
\begin{proposition}\label{nodeprop}
\begin{eqnarray}
x_u \text{ is an}~l\text{-node} & \iff & l\in t(u,j)~\forall j\in S, \label{eq:node-t}\\
                               & \Rightarrow & \V_u^l(x_{1\ldots u})\neq \emptyset,
                               \label{eq:node-Va}\\
                               &\Rightarrow & \forall v\in \V(x_{1\ldots n}), \forall
             v^*\in \V_u^l(x_{1\ldots u})~(v^*,v_{u+1\ldots n})\in \V_u^l(x_{1\ldots n}),
             \label{eq:l-tie}\\
             & \Rightarrow & \V_u^l(x_{1\ldots n})\neq \emptyset, \label{eq:node-V}\\
             & \Rightarrow &\text{Right hand side of~}\eqref{eq:decomp}\label{joop}.
\end{eqnarray}
Whether $x_u$ is a node does not depend on $x_i$, $i>u$.
\end{proposition}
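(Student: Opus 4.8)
The statement is a chain of implications closed off by a remark about dependence on future observations, and the plan is simply to verify each link in turn, since everything needed is already in place. The equivalence \eqref{eq:node-t} is a direct unfolding of definitions: by \eqref{vaike}, $l\in t(u,j)$ means exactly $\delta_u(l)p_{lj}\ge\delta_u(i)p_{ij}$ for every $i\in S$, so imposing this for all $j\in S$ reproduces \eqref{krit}, which is Definition \ref{def:node} of an $l$-node.

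For \eqref{eq:node-Va} the one genuine idea is to upgrade the ``pointwise in $j$'' node inequality to a statement about $\max_i\delta_u(i)$. Summing \eqref{krit} over $j\in S$ and using that $\mathbb{P}$ is stochastic, so that $\sum_j p_{lj}=\sum_j p_{ij}=1$, yields $\delta_u(l)\ge\delta_u(i)$ for all $i\in S$. By \eqref{eq:delta} one has $\max_{q\in S^u}\Lambda(q;x_{1\ldots u})=\max_i\delta_u(i)=\delta_u(l)$, and $\delta_u(l)$ is by definition attained by some $q\in S^u$ with $q_u=l$; that $q$ maximizes $\Lambda(\cdot;x_{1\ldots u})$ globally, hence $q\in\V_u^l(x_{1\ldots u})$, which is therefore nonempty.

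For \eqref{eq:l-tie} I would take arbitrary $v\in\V(x_{1\ldots n})$ and $v^*\in\V_u^l(x_{1\ldots u})$, set $w=(v^*,v_{u+1\ldots n})$, and check $w$ against the characterization \eqref{viimane} of $\V(x_{1\ldots n})$. Its last coordinate is $v_n$, which maximizes $\delta_n$ because $v\in\V(x_{1\ldots n})$; for $1\le t<u$ the condition $w_t\in t(t,w_{t+1})$ holds because $v^*\in\V(x_{1\ldots u})$ and $t(t,\cdot)$ is determined by $\delta_t$ alone, hence is the same object whether computed in the length-$u$ or the length-$n$ problem; for $u<t<n$ it holds because $v\in\V(x_{1\ldots n})$; and at the seam $t=u$ one needs $l=w_u\in t(u,v_{u+1})$, which is exactly \eqref{eq:node-t}. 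Hence $w\in\V(x_{1\ldots n})$ with $w_u=l$, i.e. $w\in\V_u^l(x_{1\ldots n})$. Now \eqref{eq:node-V} is immediate: $\V(x_{1\ldots n})$ is nonempty, being the argmax of $\Lambda(\cdot;x_{1\ldots n})$ over the finite set $S^n$, and $\V_u^l(x_{1\ldots u})\neq\emptyset$ by \eqref{eq:node-Va}, so \eqref{eq:l-tie} produces an element of $\V_u^l(x_{1\ldots n})$. Then \eqref{joop} follows since we have just established both $\V_u^l(x_{1\ldots n})\neq\emptyset$ and $\V_u^l(x_{1\ldots u})\neq\emptyset$, which are precisely the hypotheses of Corollary \ref{joop3}, whose conclusion is the right-hand side of \eqref{eq:decomp}. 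Finally, the closing sentence holds because $t(u,j)$ is built from $\delta_u(l)=\max_{q\in S^l_u(u)}\Lambda(q;x_{1\ldots u})$, which depends on $x_1,\ldots,x_u$ only; combined with \eqref{eq:node-t}, node-ness of $x_u$ therefore depends on $x_{1\ldots u}$ only.

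None of this is deep. The only step requiring a small idea is \eqref{eq:node-Va}, namely the summation over $j$ that converts the node inequality into $\delta_u(l)=\max_i\delta_u(i)$; the only step requiring care rather than thought is the seam $t=u$ in \eqref{eq:l-tie}, where one must be sure the predecessor sets $t(t,\cdot)$ are the same objects in the problems of lengths $u$ and $n$.
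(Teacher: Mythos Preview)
Your proof is correct and follows essentially the same route as the paper: the summation of \eqref{krit} over $j$ to obtain $\delta_u(l)\ge\delta_u(i)$, the verification of the concatenation $(v^*,v_{u+1\ldots n})$ against \eqref{viimane} with the node condition supplying the seam $l\in t(u,v_{u+1})$, and the appeal to Corollary~\ref{joop3} for \eqref{joop} are all exactly what the paper does. One cosmetic remark: you overload the symbol $t$ as both a time index and the predecessor-set function $t(\cdot,\cdot)$ in the same line, which is momentarily confusing but not incorrect.
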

\begin{proof}
The final statement follows immediately from Definition
\ref{def:node} and \eqref{eq:delta}, and \eqref{eq:node-t} also
follows immediately from Definition \ref{def:node} and
\eqref{vaike}. Summing both sides of \eqref{krit} over $j\in S$,
we obtain
\begin{equation}\label{eq:lastl}
\delta_u(l)\geq  \delta_u(i),\quad \forall i\in S,
\end{equation}
hence, \eqref{eq:node-Va} holds by \eqref{viimane}.  Note that
\eqref{eq:l-tie}  means that any alignment $v\in\V(x_{1\ldots n})$
can be modified by setting $v_u=l$ and taking $v^*_i\in
t(i,v_{i+1})$ for $i=u-1,u-2,\ldots,1$, and the modified string
remains an alignment, i.e. belongs to  $\V(x_{1\ldots n})$.  Such
a modification is evidently always possible, i.e.,
$(v^*,v_{u+1\ldots n})$ is well-defined since $\V_u^l(x_{1\ldots
u})\neq \emptyset$. For $u=n$ this holds trivially, for $u<n$ this
follows from \eqref{eq:node-t} (as the latter implies $l\in
t(u,v_{u+1})$ for any value of $v_{u+1}$), and \eqref{viimane}.
Also, \eqref{eq:l-tie} implies \eqref{eq:node-V}. Finally, given
\eqref{eq:node-Va} and \eqref{eq:node-V}, Corollary \ref{joop3}
yields \eqref{joop}.
\end{proof}
\begin{remark}\label{fixed}
Note that a modification of $v\in\V(x_{1\ldots x_n})$ possibly
required to enforce $v_u=l$ when $x_u$ is an $l$-node (see proof
of \eqref{eq:l-tie} above) depends only on $x_1,\ldots,x_{u-1}$.
Thus,  if $x_u$ is an $l$-node and if $v^*\in \V_u^l(x_{1\ldots
x_u})$, then for any $n > u$ and any $x_{u+1},\ldots,x_n$
\eqref{eq:l-tie} always guarantees an alignment $v\in\V(x_{1\ldots
n})$ with $v_{1\ldots u}=v^*$, in which case we can call
$v^*$ {\em fixed}, meaning that $v^*$ can be kept as the substring
of the first $u$ components for any alignment based on the
extended observations.
\end{remark}
The fact that  $v\in \V(x_{1\ldots n})$ in general does not imply
$v_{1\ldots u} \in \V(x_{1\ldots u})$ complicates the structure
of the alignments and furthermore emphasizes the significance of
nodes in view of \eqref{joop} and Remark \ref{fixed}.
\begin{corollary}\label{good}
Suppose the observations $x_1,\ldots,x_n$ are such that  for some
$1\le u_1<u_2<\cdots <u_k\le n$, the observations $x_{u_i}$ are
$l_i$-nodes, $i=1,\ldots,k-1$. Then
\begin{eqnarray}\label{pieces} \nonumber
\emptyset \ne \V_{u_1u_2\cdots u_k}^{l_1l_2\cdots l_k}(x_{1\ldots
n})=&&\\
=\V_{u_1}^{l_1}(x_{1\ldots u_1})\times
\V_{(l_1)u_2}^{\,\,\,\,\,\,\,\,l_2}(x_{u_1+1\ldots u_2})\times\cdots\times
\V_{(l_{k-1})u_k}^{\,\,\,\,\,\,\,\,\,\,\,\,\,\,l_k}(x_{u_{k-1}+1\ldots u_k})\times
\V_{(l_k)}(x_{u_k+1\ldots n}).&&
\end{eqnarray}
\end{corollary}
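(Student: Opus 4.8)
The plan is to argue by induction on the number of split points $k$, at each step peeling off the leftmost node $x_{u_1}$ and invoking Corollary~\ref{joop3}. First I would note that Propositions~\ref{joop0} and \ref{nodeprop} and Corollary~\ref{joop3} are proved using only the Markov property and the multiplicative form of $\Lambda$, so they hold verbatim with the initial distribution $\pi$ replaced by any row $(p_{li})_{i\in S}$ of the transition matrix; I will freely use these ``$(l)$-versions'' for the sub-problems that the peeling produces. The base case ($k=2$, one node at $u_1$) is then essentially Corollary~\ref{joop3}: the $l_1$-node hypothesis gives, via \eqref{eq:node-Va}, \eqref{eq:node-V} and \eqref{joop}, that $\V_{u_1}^{l_1}(x_{1\ldots n})\neq\emptyset$ equals $\V_{u_1}^{l_1}(x_{1\ldots u_1})\times\V_{(l_1)}(x_{u_1+1\ldots n})$, after which imposing $v_{u_2}=l_2$ splits the second factor through the $(l_1)$-version of Corollary~\ref{joop3} applied inside $x_{u_1+1\ldots n}$. (The displayed non-emptiness at the last split point tacitly also needs $x_{u_k}$ to be a node, or at least the constrained set there to be non-empty; I would invoke that where needed and treat the final, node-free block $x_{u_k+1\ldots n}$ via \eqref{joop1}--\eqref{joop2}.)

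The one genuinely non-routine point --- what I expect to be the main obstacle --- is that ``$l$-node'' is defined relative to $\pi$, whereas peeling off $x_{u_1}$ leaves a sub-problem carrying the initial distribution $(p_{l_1 i})_{i\in S}$, so I must show the remaining nodes survive this change. I would prove the lemma: \emph{if $x_{u_1}$ is an $l_1$-node and $x_{u_2}$ is an $l_2$-node with $u_1<u_2$, then $x_{u_2}$ is an $l_2$-node for the observations $x_{u_1+1\ldots}$ under the initial distribution $(p_{l_1 i})_{i\in S}$.} The proof is a score-factorization: writing $\delta^{(l_1)}_v(j)$ for the score \eqref{eq:delta} of $x_{u_1+1\ldots v}$ under $(p_{l_1 i})_{i\in S}$, the $l_1$-node condition \eqref{krit} says the maximising predecessor at time $u_1$ in the recursion \eqref{rec} may always be taken equal to $l_1$; iterating \eqref{rec} from $u_1$ onward then gives $\delta_v(j)=\delta_{u_1}(l_1)\,\delta^{(l_1)}_v(j)$ for all $v>u_1$, $j\in S$, and substituting this into \eqref{krit} at $u_2$ and cancelling the common positive factor $\delta_{u_1}(l_1)$ (positive by \eqref{eq:lastl}, since a vanishing $\delta_{u_1}(l_1)$ would force $\Lambda(\cdot\,;x_{1\ldots n})\equiv 0$) yields precisely \eqref{krit} at $u_2$ relative to $(p_{l_1 i})_{i\in S}$.

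Granting the lemma, the induction closes: $x_{u_1}$ being an $l_1$-node gives, as in the base case,
\[
\V_{u_1\ldots u_k}^{l_1\ldots l_k}(x_{1\ldots n})=\V_{u_1}^{l_1}(x_{1\ldots u_1})\times \V_{(l_1)u_2\ldots u_k}^{\,\,\,\,\,\,l_2\ldots l_k}(x_{u_1+1\ldots n}),
\]
and the lemma shows $x_{u_2},\ldots,x_{u_{k-1}}$ remain $l_i$-nodes for $x_{u_1+1\ldots n}$ under $(p_{l_1 i})_{i\in S}$, so the $(l_1)$-version of the Corollary with $k-1$ split points (the induction hypothesis) decomposes the second factor into $\V_{(l_1)u_2}^{\,\,\,\,\,\,l_2}(x_{u_1+1\ldots u_2})\times\cdots\times\V_{(l_k)}(x_{u_k+1\ldots n})$, which is the claim; non-emptiness propagates from \eqref{eq:node-Va} and \eqref{eq:node-V} at every stage. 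Besides the score-factorization lemma, I expect the only care required to be routine bookkeeping of the successive shifted initial distributions.
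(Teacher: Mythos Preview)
Your argument is correct: the score-factorization lemma $\delta_v(j)=\delta_{u_1}(l_1)\,\delta^{(l_1)}_{v-u_1}(j)$ for $v>u_1$ does follow from the node condition \eqref{krit} and the recursion \eqref{rec} exactly as you say, and with it your left-to-right induction goes through. Your observation that the non-emptiness claim tacitly needs $x_{u_k}$ to be an $l_k$-node as well is also right (the paper's own proof uses this at its very first step).

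The paper, however, takes a shorter route that sidesteps your lemma entirely: it peels from the \emph{right}. One first applies \eqref{eq:decomp} at $u_k$ to get $\V_{u_k}^{l_k}(x_{1\ldots n})=\V_{u_k}^{l_k}(x_{1\ldots u_k})\times \V_{(l_k)}(x_{u_k+1\ldots n})$, then intersects with $\{v_{u_{k-1}}=l_{k-1}\}$ and applies \eqref{eq:decomp} again at $u_{k-1}$ to the left factor $\V_{u_{k-1}u_k}^{l_{k-1}l_k}(x_{1\ldots u_k})$, and so on. The point is that at every stage the factor still awaiting decomposition is of the form $\V_{u_1\ldots u_j}^{l_1\ldots l_j}(x_{1\ldots u_j})$, carrying the \emph{original} initial distribution $\pi$; the hypotheses of Corollary~\ref{joop3} at $u_{j-1}$ therefore follow directly from \eqref{eq:node-Va} and \eqref{eq:node-V} for the $\pi$-node $x_{u_{j-1}}$, with no need to transfer the node property to a shifted initial distribution. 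The $(l_i)$-initiated pieces only appear as finished right factors that are never decomposed further. So what you identified as ``the one genuinely non-routine point'' is, in the paper's ordering, not a point at all. Your approach is not wrong, just slightly longer; the factorization lemma you prove is a clean statement in its own right.
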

\begin{proof}
By (\ref{eq:node-Va}),
$$\V_{u_i}^{l_i}(x_{1\ldots u_i})\ne \emptyset,\quad
i=1,\ldots,k.$$ By (\ref{eq:node-V})
$$\V_{u_k}^{l_k}(x_{1\ldots n})\ne \emptyset,\quad
\V_{u_i}^{l_i}(x_{1\ldots u_{i+1}})\ne \emptyset \quad
i=1,\ldots,k-1.$$
From (\ref{eq:l-tie}), it now follows
$$\V_{u_{i}u_{i+1}}^{l_{i}l_{i+1}}(x_{1\ldots u_{i+1}})\ne \emptyset,\quad
i=2,\ldots k-1.$$ Now use \eqref{eq:decomp} to decompose
$$\V_{u_k}^{l_k}(x_{1\ldots n})=\V_{u_k}^{l_k}(x_{1\ldots
u_k})\times \V_{(l_k)}(x_{u_k+1\ldots n}).$$ Use \eqref{eq:decomp}
again to decompose
$$\V_{u_{k-1}u_k}^{l_{k-1}l_k}(x_{1\ldots u_k})=\V_{u_{k-1}}^{l_{k-1}}(x_{1\ldots
u_{k-1}})\times \V_{(l_{k-1})u_k}^{\,\,\,\,\,\,\,\,\,\,\,\,\,\,\,l_k}(x_{u_{k-1}+1\ldots
u_k}).$$ Proceeding this way, we obtain \eqref{pieces}.
\end{proof}\\\\
Corollary \ref{good}  guarantees the existence of an alignment
$v(x_{1\ldots n})$ that can be constructed {\it piecewise}, i.e.
\begin{equation}\label{piecesI}
(v_1,\ldots,v_{k+1})\in \V(x_{1\ldots n}),
\end{equation}
 where
\begin{align*}
&v_1\in \V_{u_1}^{l_1}(x_{1\ldots u_1}),\,v_2\in
\V_{(l_1)u_2}^{\,\,\,\,\,\,\,\,l_2}(x_{u_1+1\ldots u_2}),\ldots, v_k\in
\V_{(l_{k-1})u_k}^{\,\,\,\,\,\,\,\,\,\,\,\,\,\,\,\,l_k}(x_{u_{k-1}+1 \ldots u_k}),v_{k+1}\in
\V_{(l_{k})}(x_{u_{k}+1\ldots u_n}).
\end{align*}
\subsubsection{Proper alignment}
If  the sets $\V_{(l_{i-1})u_i}^{\,\,\,\,\,\,\,\,\,\,\,\,\,\,l_i}(x_{u_{i-1}+1\ldots u_i})$,
$i=2,\ldots, k$ as well as $\V_{(l_k)}(x_{u_k+1\ldots n})$ have
a single element each, then the concatenation \eqref{piecesI} is unique.
Otherwise,  a single $v_i$ will need to be selected from 
$\V_{(l_{i-1})u_i}^{\,\,\,\,\,\,\,\,\,\,\,\,\,\,l_i}(x_{u_{i-1}+1\ldots u_i})$.
Thus, suppose that   
$(x_{u_{i-1}+1\ldots u_i})=(x_{u_{j-1}+1\ldots u_j})$, 
and $l_i=l_j$ for some $j\ne i$. Ignoring the fact that the actual probability of
such realizations may well be zero in most cases, for technical reasons we are nonetheless 
going to be general and require that 
the selection from any $\V_{(q)u+\Delta}^{\,\,\,\,\,\,\,\,\,l}(x_{u+1\ldots u+\Delta})$ for which 
$x_u$ and $x_{u+\Delta}$ are $q$ and $l$ nodes, respectively, be made independently of
$u$.
To achieve this, we impose the following (formally even more restrictive) condition on admissible
selection schemes $\{w^{ql}(x_{1\ldots m}): \mathcal{X}^m \to
\W_{(q)m}^{\,\,\,\,\,\,\,l}(x_{1\ldots m}),~m=1,\ldots,n,~q,l\in S\}$: 
\begin{equation}\label{kooskolas}
\forall q,~\forall l\in S,~\forall m\leq n,~\forall x_{1 \ldots n}\in \mathcal{X}^n:~
w_{1\ldots n}=w^{ql}(x_{1 \ldots n}) \Rightarrow  w_{1\ldots m}=w^{qw_m}(x_{1 \ldots m}).
\end{equation}
The condition \eqref{kooskolas} above simply states that the ties are broken
consistently.
\begin{definition}\label{proper}The alignment \eqref{piecesI} based on 
$l_1$,\ldots,$l_k$ nodes $x_{u_1},\ldots,x_{u_k}$ is called proper if 
 for every $i=2,\ldots,k-1$ $$v_i=w^{l_il_{i+1}}(x_{u_i+1\ldots u_{i+1}}),$$
where $\{w^{ql}(x_{1\ldots m}): \mathcal{X}^m \to
\W_{(q)m}^{\,\,\,\,\,\,\,l}(x_{1\ldots m}),~m=1,\ldots,n,~q,l\in S\}$ is some selection scheme satisfying \eqref{kooskolas}.
\end{definition}
 Clearly, there may be many such selection schemes and the following discussion 
is valid for all of them (provided the choice is fixed throughout).
One such selection scheme is based on taking maxima under the reverse lexicographic order 
on $S^m$ (for any positive integer $m$). According to this order $\prec$, for $a, b\in  S^m$, $a\prec b$ if and only
if for some $i$, $1\le i<m$, $a_i<b_i$ and $a_j=b_j$ for $j=i+1,\ldots,m$.  (Clearly, 
if neither $a\prec b$ nor $b\prec a$, then $a_j=b_j$ for $j=1,\ldots,m$, in which
case $a$ and $b$ are defined equal for this order.)  It is immediate to verify that
\eqref{kooskolas} holds for 
\begin{equation}\label{eq:selectmax}
w^{ql}(x_{1\ldots m})\stackrel{\mathrm def}{=} 
\max\nolimits_\prec\W_{(q)m}^{\,\,\,\,\,\,\,l}(x_{1\ldots m}),~1\le m\le n,~q,l\in S.
\end{equation}
For the sake of concreteness, we are going to refer to this particular selection
scheme as {\em the selection} and base all proper alignments on it. 
Also, since Definition \ref{proper} does not concern 
the initial or terminal components of the concatenated alignment  \eqref{piecesI}, 
we extend the selection (again, purely for the sake of concreteness of the presentation) 
to 
the initial and terminal components
of the concatenated alignment \eqref{piecesI}. Thus, to specify the initial component
we have $w^{\pi l}(x_{1\ldots m})\stackrel{\mathrm def}{=} 
\max\nolimits_\prec\W_m^l(x_{1\ldots m}),~1\le m\le n,$ for all $l\in S$  and for all $\pi$,
probability mass functions on $S$. To be concise, we will write $\vee W$ for the selected element of $W$ for  any $W\subset S^m$ (where $W$ generally depends on $x_{1\ldots m}$).  
In particular, the final component is then specified via $\vee\V_{(l)}(x_{1\ldots m})$.
\begin{example}\label{naidesegu} Consider an i.i.d. sequence
$X_1,X_2,\ldots$, where $X_1$ has a mixture distribution, i.e. the
density of $X_1$ is $\sum_{i=1}^K p_i f_i$. Here $p_i>0$ are the
mixture weights. Such a sequence is  an HMM with the transition
matrix satisfying $p_{ij}=p_j$ $\forall i,j$. In this case, an
observation   $x_u$ is an $l$-node if
$$\delta_u(l)\geq \delta_u(i),\quad \forall i.$$
In particular, this means that every observation is an $l$-node
for some $l\in S$.  Then  \eqref{rec} becomes
$$\delta_{u+1}(i)=\max_j(\delta_u(j))p_{i}f_i(x_{u+1})\propto p_{i}f_i(x_{u+1}),\quad \forall i$$
and
\begin{equation}\label{segu}
\delta_u(l)\geq \delta_u(i),\quad \forall i\quad \iff \quad
p_lf_l(x_u)\geq p_if_i(x_u),\quad \forall i.
\end{equation}
Thus, in a mixture-model, any observation $x_u$ is a node, more
precisely it is an $l$-node for any $l=\arg\max_j$
$(p_jf_j(x_u))$. For this model, the alignment can naturally be concatenated
point-wise: $v(x_{1\ldots n})=(v(x_1),\ldots,v(x_n)),$ where
\begin{equation}\label{yproper}
v(x)=\arg\max_ip_if_i(x).
\end{equation}
 The alignment will be proper if ties in \eqref{yproper} are broken
 consistently, which is, for example, the case when using the selection \eqref{eq:selectmax}.
\end{example}
\subsection{$r^{th}$-order nodes}
The concept of nodes is both important
and rich, but the existence of a node can also be restrictive in
the following sense:  Suppose $x_{1\ldots u}$ is such that
$\delta_u(i)>0$ for every $i$. In this case, \eqref{krit} is
equivalent to
\begin{align*}
\delta_u(l)&\geq \max_{i}\Bigl(\max_j \Bigl({p_{ij}\over
p_{lj}}\Bigr)\delta_u(i)\Bigr)
\end{align*}
and actually implies $p_{lj}>0$ for every $j\in S$. 
Hence, one cannot guarantee the existence of an $l$-node for an arbitrary
emission distribution since an ergodic $\mathbb{P}$ in general can have a zero in every row, 
violating the above positivity constraint on the $l^{th}$ row of $\mathbb{P}$.
We now generalize the notion of nodes in order to eliminate the
aforementioned positivity constraint and to still enjoy the
desirable  properties of nodes.
We need some additional definitions: For each $u\geq 1$ and $r\ge
1$, let
\begin{equation}\label{eq:pr}
p^{(r)}_{ij}(u)=\max_{q_{1\ldots r}\in
S^r}p_{iq_1}f_{q_1}(x_{u+1})
p_{q_1q_2}f_{q_2}(x_{u+2})p_{q_2q_3}\ldots
p_{q_{r-1}q_r}f_{q_r}(x_{u+r})p_{q_r j}.
\end{equation}
Also,  for each $u\geq 1$  define $p^{(0)}_{ij}(u)=p_{ij}$, and
notice
\begin{equation}\label{eq:prrecurse}
p^{(r)}_{ij}(u)=\max_{q\in S}p^{(r'-1)}_{iq}(u)f_q(x_{u+1})p^{(r-r')}_{q j}(u+1),
~\text{for all}~r'=1,2,\ldots,r.
\end{equation}
The recursion \eqref{rec} then generalizes to
\begin{equation}
  \label{eq:prrecurse2}
 \delta_{u+1}(j)=\max_{i\in S} \bigl(\delta_{u-r}(i) p^{(r)}_{ij}(u-r)\bigr)f_j(x_{u+1}),~ r<u. 
\end{equation}
For $r\ge 1$ and $u+r\le n$  define
\begin{align}\label{eq:tr}
t^{(r)}(u,j)&=\{l\in S: \forall i\in S~\delta_u(l)p^{(r-1)}_{lj}\ge \delta_u(i)p^{(r-1)}_{ij}\},  \\
\nonumber t^{(r)}(u,J)&=\{t^{(r)}(u,j):~j\in J\},\quad J\subset S.
\end{align}
 It can be verified that for $1\le q, r$, $q+r\le n-u$
\begin{equation}\label{eq:trecurse}
t^{(r+q)}(u,j)=t^{(q)}(u,t^{(r)}(u+q,j)),
\end{equation}
where $t^{(1)}(u,j)$ coincides with $t(u,j)$ \eqref{viimane}.
Thus, $l_1\in t^{(q)}(u,t^{(r)}(u+q,j))$ in \eqref{eq:trecurse}
implies the existence of  $l_2\in t^{(r)}(u+q,j)$ such that
$l_1\in t^{(q)}(u,l_2)$.  In short,
\begin{equation*}
  t^{(q)}(u,t^{(r)}(u+q,j))=\cup_{l\in t^{(r)}(u+q,j)}t^{(q)}(u,l).
\end{equation*}
Note that with this new notation, \eqref{viimane} and \eqref{eq:constrainedmle} can be rewritten respectively as follows:
\begin{align}\label{viimane2}
\V(x_1,\ldots,x_n)&=\{v\in S^n:\delta_{n}(v_n)\ge \delta_n(i)~
                    \forall i\in S, v_u\in t^{(n-u)}(u,v_n)~1\le u<n\}\\
\W_u^l(x_1,\ldots,x_n)&=\{v\in S^l_n(n):~v_u\in t^{(n-u)}(u,l)~1\le u<n\}\label{viimane2a}
\end{align}
We now  generalize the concept of the node:
\begin{definition}\label{rnode}
Let  $1\le r<n$, $u\le n-r$
and let $l\in S$.  We call $x_u$ an $l$-node of order $r$ if
\begin{equation}\label{r-krit}
\delta_u(l)p^{(r)}_{lj}(u)\geq \delta_u(i)p^{(r)}_{ij}(u),\quad
\forall i,j\in S.
\end{equation}
We also say that $x_u$ is a node of order $r$ if it is an
$l$-node of order $r$ for some $l\in S$.
\end{definition}Note that a $0^{th}$-order node is  just a node.
\begin{figure}
\begin{center}
\setlength{\unitlength}{0.00068in}
\begingroup\makeatletter\ifx\SetFigFont\undefined%
\gdef\SetFigFont#1#2#3#4#5{%
  \reset@font\fontsize{#1}{#2pt}%
  \fontfamily{#3}\fontseries{#4}\fontshape{#5}%
  \selectfont}%
\fi\endgroup%
{\renewcommand{\dashlinestretch}{30}
\begin{picture}(8669,2883)(0,-10)
\put(0,2112){\makebox(0,0)[lb]{\smash{{{\SetFigFont{12}{14.4}{\rmdefault}{\mddefault}{\updefault}1}}}}}
\put(0,1212){\makebox(0,0)[lb]{\smash{{{\SetFigFont{12}{14.4}{\rmdefault}{\mddefault}{\updefault}2}}}}}
\put(0,312){\makebox(0,0)[lb]{\smash{{{\SetFigFont{12}{14.4}{\rmdefault}{\mddefault}{\updefault}3}}}}}
\put(1597,1212){\ellipse{150}{150}}
\put(1597,2112){\ellipse{150}{150}}
\put(1597,312){\ellipse{300}{300}}
\put(3097,1212){\ellipse{300}{300}}
\put(3097,312){\ellipse{150}{150}}
\put(3097,2112){\ellipse{150}{150}}
\put(4597,312){\ellipse{150}{150}}
\put(4597,1212){\ellipse{150}{150}}
\put(4597,2112){\ellipse{150}{150}}
\put(6097,312){\ellipse{150}{150}}
\put(6097,1212){\ellipse{150}{150}}
\put(6097,2097){\ellipse{150}{150}}
\put(7597,312){\ellipse{150}{150}}
\put(7597,1212){\ellipse{150}{150}}
\put(7597,2112){\ellipse{150}{150}} \path(97,2412)(8497,2412)
\path(97,12)(8497,12) \path(772,312)(1522,312)
\path(3097,1212)(6097,1212) \path(6097,1212)(7597,2112)
\path(1620,350)(3097,1212)
\dashline{60.000}(1597,1212)(3097,312)(4597,312)(6097,312)
\dashline{60.000}(697,1212)(1597,1212)
\dashline{60.000}(3097,1212)(4597,2112)(6097,2112)(7597,1212)
\thicklines \dottedline{135}(8047,2712)(8647,2712)
\dottedline{135}(22,312)(622,312) \thinlines
\dashline{60.000}(6097,1212)(7597,312) \thicklines
\dottedline{135}(8047,2112)(8647,2112)
\dottedline{135}(22,2712)(622,2712) \thinlines
\dashline{60.000}(697,2112)(3097,2112)
\put(1222,2712){\makebox(0,0)[lb]{\smash{{{\SetFigFont{12}{14.4}{\rmdefault}{\mddefault}{\updefault}$x_{u-1}$}}}}}
\put(2722,2712){\makebox(0,0)[lb]{\smash{{{\SetFigFont{12}{14.4}{\rmdefault}{\mddefault}{\updefault}$x_u$}}}}}
\put(4147,2712){\makebox(0,0)[lb]{\smash{{{\SetFigFont{12}{14.4}{\rmdefault}{\mddefault}{\updefault}$x_{u+1}$}}}}}
\put(5572,2712){\makebox(0,0)[lb]{\smash{{{\SetFigFont{12}{14.4}{\rmdefault}{\mddefault}{\updefault}$x_{u+2}$}}}}}
\put(7072,2712){\makebox(0,0)[lb]{\smash{{{\SetFigFont{12}{14.4}{\rmdefault}{\mddefault}{\updefault}$x_{u+3}$}}}}}
\end{picture}   }
\end{center}
\caption{In this example, $x_u$ is a $2^d$ order 2-node, $x_{u-1}$
is a $3^d$-order  3-node. Thus, for given $x_{1\ldots n}$, the alignment includes $v_u=2$.
However, unlike in the case of ordinary nodes (of order 0), $x_{u+1}$ can now
destroy the property of $x_u$ being the (second order) node. 
}
\label{fig:viterbi_algorithm2}
\end{figure}
One immediately obtains the following properties of the
(generalized) nodes:
\begin{proposition}\label{rnode_propty}
Let $0\le r$, $1\le q$ such that $r+q \le n-u$, then
\begin{enumerate}
\item If $x_u$ is an $r^{th}$-order $l$-node, then it is also an $l$-node of order $r+q$.
\item If $x_{u+q}$ is an $r^{th}$-order $l$-node, then $x_u$ is an $(r+q)^{th}$-order $l'$-node for any $l'\in  t^{(q)}(u,l)$.
\end{enumerate}
\end{proposition}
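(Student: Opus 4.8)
The plan is to reduce both parts to elementary inclusions between the sets $t^{(\cdot)}(u,\cdot)$ of \eqref{eq:tr}, via the composition rule \eqref{eq:trecurse}. The key observation is that, comparing Definition~\ref{rnode} with \eqref{eq:tr}, $x_u$ is an $l$-node of order $r$ precisely when $l\in\bigcap_{j\in S}t^{(r+1)}(u,j)$; this is the natural generalisation of the characterisation \eqref{eq:node-t} of ordinary ($0^{th}$-order) nodes. Hence Part 1 amounts to $\bigcap_{j}t^{(r+1)}(u,j)\subseteq\bigcap_{j}t^{(r+q+1)}(u,j)$, and Part 2 amounts to the implication: $l\in\bigcap_{j}t^{(r+1)}(u+q,j)\ \Rightarrow\ t^{(q)}(u,l)\subseteq\bigcap_{j}t^{(r+q+1)}(u,j)$. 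Two trivial facts are used throughout: each $t^{(a)}(u,j)$ is an argmax over the finite set $S$, hence nonempty; and $t^{(a)}(u,J)=\bigcup_{l\in J}t^{(a)}(u,l)$ is monotone in $J$.

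For Part 1, I would split the exponent in \eqref{eq:trecurse} so that the \emph{outer} factor sits at time $u$, where the hypothesis lives: $t^{(r+q+1)}(u,j)=t^{(r+1)}\big(u,\,t^{(q)}(u+r+1,j)\big)$. Since $l\in t^{(r+1)}(u,j')$ for every $j'\in S$, also $l\in t^{(r+1)}(u,J)$ for every nonempty $J\subseteq S$, in particular for $J=t^{(q)}(u+r+1,j)$; thus $l\in t^{(r+q+1)}(u,j)$ for all $j$, i.e.\ $x_u$ is an $l$-node of order $r+q$.

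For Part 2, I would split \eqref{eq:trecurse} the other way, so the \emph{inner} factor sits at time $u+q$: $t^{(r+q+1)}(u,j)=t^{(q)}\big(u,\,t^{(r+1)}(u+q,j)\big)$. Since $l\in t^{(r+1)}(u+q,j)$ for every $j$, monotonicity gives $t^{(q)}(u,l)\subseteq t^{(q)}\big(u,t^{(r+1)}(u+q,j)\big)=t^{(r+q+1)}(u,j)$ for every $j$, so every $l'\in t^{(q)}(u,l)$ lies in $\bigcap_{j}t^{(r+q+1)}(u,j)$, which is the claim. The main obstacle is conceptual and specific to Part 2: the hypothesis $l'\in t^{(q)}(u,l)$ on its own controls transition scores only \emph{into the single state $l$}, whereas being an $(r+q)^{th}$-order node concerns all target states $j$; what bridges the gap is the node property at time $u+q$, packaged inside the inner factor of \eqref{eq:trecurse}, and the thing not to botch is aligning the two splittings of \eqref{eq:trecurse} with the right time points.

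To cover the degenerate boundary $u+r+q=n$ (where $t^{(r+q+1)}(u,\cdot)$ in \eqref{eq:tr} is undefined), or simply to avoid \eqref{eq:trecurse}, both parts can instead be done directly from \eqref{eq:pr}--\eqref{eq:prrecurse2} and \eqref{r-krit}. For Part 1, write $p^{(r+q)}_{ij}(u)=\max_m p^{(r)}_{im}(u)\,f_m(x_{u+r+1})\,p^{(q-1)}_{mj}(u+r+1)$ (path decomposition of \eqref{eq:pr}); pulling $\delta_u(i)p^{(r)}_{im}(u)\le\delta_u(l)p^{(r)}_{lm}(u)$, which is \eqref{r-krit}, inside the $\max$ over $m$ immediately gives $\delta_u(i)p^{(r+q)}_{ij}(u)\le\delta_u(l)p^{(r+q)}_{lj}(u)$. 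For Part 2, bound $\delta_u(i)p^{(r+q)}_{ij}(u)=\max_m\big(\delta_u(i)p^{(q-1)}_{im}(u)f_m(x_{u+q})\big)p^{(r)}_{mj}(u+q)$ above by $\max_m\delta_{u+q}(m)p^{(r)}_{mj}(u+q)$ using \eqref{eq:prrecurse2}, then by $\delta_{u+q}(l)p^{(r)}_{lj}(u+q)$ using \eqref{r-krit} at $u+q$; finally $l'\in t^{(q)}(u,l)$ gives $\delta_{u+q}(l)=\delta_u(l')p^{(q-1)}_{l'l}(u)f_l(x_{u+q})$ and $p^{(r+q)}_{l'j}(u)\ge p^{(q-1)}_{l'l}(u)f_l(x_{u+q})p^{(r)}_{lj}(u+q)$, so the bound becomes $\delta_u(l')p^{(r+q)}_{l'j}(u)$ --- exactly the $(r+q)^{th}$-order $l'$-node inequality.
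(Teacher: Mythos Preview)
Your proof is correct. The paper itself does not supply a proof of this proposition; it simply states that ``one immediately obtains'' these properties from the definitions, so there is nothing to compare against. Both of your routes---the $t^{(\cdot)}$-composition argument via \eqref{eq:trecurse} and the direct manipulation of \eqref{eq:pr}--\eqref{eq:prrecurse2} with \eqref{r-krit}---are sound, and it is good that you flagged the boundary $r+q=n-u$ where $t^{(r+q+1)}(u,\cdot)$ may fall outside the declared range of \eqref{eq:tr}, since the direct argument handles that case cleanly.
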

Next, we generalize Proposition \ref{nodeprop}:
\begin{proposition}\label{nodepropr}
\begin{align}
x_u \text{ is an}~l\text{-node of order }r & \iff
l\in t^{(r+1)}(u,j)~\forall j\in S, \label{eq:node-tr}\\
\nonumber u+r<n, x_u \text{ is an}~l\text{-node of order }r
&\Rightarrow  \forall v\in \V(x_{1\ldots n}),
             \forall v^*\in \W_u^l(x_{1\ldots u})\\ \exists v'\in \W_{u~u+r+1}^{l~v_{u+r+1}}(x_{1\ldots u+r+1}):
            &v^*=v'_{1\ldots u}, (v',v_{u+r+1\ldots n})\in \V_u^l(x_{1\ldots n}),\label{eq:l-tier}\\
            & \Rightarrow \V_u^l(x_{1\ldots n})\neq \emptyset, \label{eq:node-Vr}\\
            & \Rightarrow  \V_u^l(x_{1\ldots n})=\W^l_u(x_{1\ldots u})
            \times \V_{(l)}(x_{u+1\ldots n})\label{joopr}.
\end{align}
Finding $v'_{u+1\ldots u+r}$ and $v^*\in
\W_u^l(x_{1\ldots u})$ in \eqref{eq:l-tier} for given $v\in \V(x_{1\ldots n})$  
does not require knowledge of any of $x_{u+r+1\ldots n}$. Finally, whether $x_u$
is an $l$-node of order $r$ depends on $x_1,\ldots, x_{u+r}$
only, i.e. it does not depend on any $x_i$ for $i>{u+r}$.
\end{proposition}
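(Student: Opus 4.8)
The plan is to obtain \eqref{eq:node-tr} and the two concluding sentences straight from the definitions, carry out the one construction behind \eqref{eq:l-tier}, and then read off \eqref{eq:node-Vr} and \eqref{joopr} from results already proved. Comparing Definition \ref{rnode} with \eqref{eq:tr} shows that \eqref{r-krit} asserts precisely $l\in t^{(r+1)}(u,j)$ for every $j\in S$, which is \eqref{eq:node-tr}. For the locality statements, \eqref{r-krit} involves only $\delta_u(\cdot)$, a function of $x_{1\ldots u}$ by \eqref{eq:delta}, and the quantities $p^{(r)}_{\cdot\cdot}(u)$, functions of $x_{u+1},\ldots,x_{u+r}$ by \eqref{eq:pr}; hence whether $x_u$ is an $l$-node of order $r$ depends on $x_{1\ldots u+r}$ only, and the same localisation will be used for the middle segment of $v'$ below.

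For \eqref{eq:l-tier}, fix $v\in\V(x_{1\ldots n})$ and $v^*\in\W_u^l(x_{1\ldots u})$. Since $u+r<n$, \eqref{eq:node-tr} gives $l\in t^{(r+1)}(u,v_{u+r+1})$; iterating \eqref{eq:trecurse} together with the union identity displayed just after it peels off this membership one index at a time and yields states $l=l_u,l_{u+1},\ldots,l_{u+r}$ with $l_{u'}\in t(u',l_{u'+1})$ for $u'=u,\ldots,u+r-1$ and $l_{u+r}\in t(u+r,v_{u+r+1})$. By the localisation above, extracting $l_{u+1},\ldots,l_{u+r}$ requires only $x_{1\ldots u+r}$ and the given value $v_{u+r+1}$, not any of $x_{u+r+1\ldots n}$. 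Put $v'=(v^*_1,\ldots,v^*_{u-1},l,l_{u+1},\ldots,l_{u+r},v_{u+r+1})\in S^{u+r+1}$ and $\tilde v=(v',v_{u+r+1\ldots n})$ (a well-defined length-$n$ string, since $v'_{u+r+1}=v_{u+r+1}$). Now $v^*\in\W_u^l(x_{1\ldots u})$ satisfies $v^*_{u'}\in t(u',v^*_{u'+1})$, $u'<u$, by \eqref{eq:constrainedmle}, and $v\in\V(x_{1\ldots n})$ satisfies $v_{u'}\in t(u',v_{u'+1})$, $u'<n$, with $\delta_n(v_n)$ maximal by \eqref{viimane}; since the sets $t(u',\cdot)$ are determined by $x_{1\ldots u'}$ alone, splicing these with the chain $l_u,\ldots,l_{u+r}$ makes $\tilde v$ satisfy every clause of \eqref{viimane}, so $\tilde v\in\V(x_{1\ldots n})$, and as $\tilde v_u=l$ in fact $\tilde v\in\V_u^l(x_{1\ldots n})$. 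This already gives \eqref{eq:node-Vr}.

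It remains to verify $v'\in\W_{u~u+r+1}^{l~v_{u+r+1}}(x_{1\ldots u+r+1})$; here we may assume $x_{1\ldots n}$ has positive likelihood, which holds almost surely. Factoring by the Markov property exactly as in the proof of Proposition \ref{joop0}, $\Lambda(\tilde v;x_{1\ldots n})=\Lambda(v';x_{1\ldots u+r+1})\cdot\Lambda(v_{u+r+2\ldots n};x_{u+r+2\ldots n}\mid v_{u+r+1})$, and the second factor is positive because $\Lambda(\tilde v;x_{1\ldots n})$, being the maximum over $S^n$, is positive. If some $w'\in S^{u+r+1}$ with $w'_u=l$ and $w'_{u+r+1}=v_{u+r+1}$ had $\Lambda(w';x_{1\ldots u+r+1})>\Lambda(v';x_{1\ldots u+r+1})$, then $(w',v_{u+r+2\ldots n})$ would have strictly larger $\Lambda$ than $\tilde v$ (same positive second factor, larger first factor), contradicting $\tilde v\in\V(x_{1\ldots n})$; hence $v'$ maximises $\Lambda$ over the doubly-constrained set, and $v^*=v'_{1\ldots u}$ by construction, proving \eqref{eq:l-tier}. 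Finally, \eqref{joopr} follows from \eqref{eq:node-Vr} via Proposition \ref{joop0}: \eqref{joop2} gives $\V_u^l(x_{1\ldots n})=\W_u^l(x_{1\ldots n})$, and \eqref{joop1} rewrites the latter as $\W_u^l(x_{1\ldots u})\times\V_{(l)}(x_{u+1\ldots n})$.

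The main obstacle is this last step. The chain $l_u,\ldots,l_{u+r}$ certifies only that $v'$ is step-optimal, and the constraint at time $u+r+1$ is invisible to the plain sets $t(u,\cdot)$, which are built from the unconstrained scores $\delta_u$; a direct recursion therefore does not close the argument. Routing through the already-established global optimality of $\tilde v$ and a swap at time $u+r+1$ avoids this, at the mild price of assuming positive likelihood of $x_{1\ldots n}$ -- automatic for almost every observation sequence. (Alternatively one can compute $\Lambda(v';x_{1\ldots u+r+1})=\delta_u(l)\,p^{(r)}_{l\,v_{u+r+1}}(u)\,f_{v_{u+r+1}}(x_{u+r+1})$ directly from a reindexing of \eqref{eq:prrecurse2} and the node inequality \eqref{r-krit}, but this requires carrying the same positivity along the chain.)
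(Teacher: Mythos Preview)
Your proof is correct and follows the same overall route as the paper: deduce \eqref{eq:node-tr} and the locality claims from the definitions, peel $l\in t^{(r+1)}(u,v_{u+r+1})$ into a $t$-chain via \eqref{eq:trecurse}, splice it between $v^*$ and $v_{u+r+1\ldots n}$, and invoke Proposition~\ref{joop0} for \eqref{joopr}.

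The one substantive difference is in verifying $v'\in\W_{u\,u+r+1}^{l\,v_{u+r+1}}(x_{1\ldots u+r+1})$. The paper does this directly from the $t$-chain characterisation \eqref{eq:constrainedmle} (rewritten as \eqref{viimane2a}): your path $(v^*,l_{u+1},\ldots,l_{u+r},v_{u+r+1})$ satisfies $v'_{u'}\in t(u',v'_{u'+1})$ for all $u'<u+r+1$, so by \eqref{eq:constrainedmle} it lies in $\W_{u+r+1}^{v_{u+r+1}}(x_{1\ldots u+r+1})$; since it also has $v'_u=l$, it automatically lies in the doubly-constrained set (any element achieving the singly-constrained maximum that also meets the extra constraint achieves the doubly-constrained maximum). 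Thus the ``obstacle'' you flag in your final paragraph is not really there: step-optimality \emph{is} enough, because the terminal constraint $v_{u+r+1}$ is exactly the endpoint of your $t$-chain, and \eqref{eq:constrainedmle} converts a $t$-chain ending at a prescribed state into membership in the corresponding $\W$-set. Your swap argument is valid but more circuitous, and the positive-likelihood caveat it incurs is avoidable.
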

\begin{proof}
The final statement  follows immediately from Definition
\ref{rnode} and relations \eqref{eq:delta} and \eqref{eq:pr}.
\eqref{eq:node-tr} also follows immediately from Definition
\ref{rnode} and \eqref{eq:tr}. In order to see \eqref{eq:l-tier},
note that applying  \eqref{eq:trecurse} with $q=1$ to $l\in
t^{(r+1)}(u,v_{u+r+1})$ once gives us $\tilde v_1\in
t^{(r)}(u+1,v_{u+r+1})$.  Applying then \eqref{eq:trecurse} with
$q=1$ to $\tilde v_i\in t^{(r-i+1)}(u+i,v_{u+r+1})$ successively for
$i=2,\ldots,r$ proves the existence of the entire $\tilde v_{1\ldots r}\in S^r$ such
that $l\in \tilde t(u,v'_1)$, $\tilde v'_1\in t(u+1,\tilde v_2)$, $\ldots$,
$\tilde v_{r-1}\in t(u+r-1,\tilde v_r)$, $\tilde v_r\in t(u,v_{u+r+1})$. Thus, recalling
\eqref{viimane2a}, $\tilde v=v'_{u+1\ldots u+r}$ for some $v'\in \W_{u~u+r+1}^{l~v_{u+r+1}}(x_{1\ldots u+r+1})$.  
Since $v^*_i\in t(i,v^*_{i+1})$ for $i=1,\ldots,u-1$ ($v^*\in
\W_u^l(x_{1\ldots u})$ and \eqref{eq:constrainedmle}), and $v_i\in
t(i,v_{i+1})$ for $i=u+r+1,\ldots,n-1$ and $\delta_{n}(v_n)\ge
\delta_n(j)$ $\forall j\in S$ ($v\in \V(x_{1\ldots n})$ and
\eqref{viimane}), one gets $(v^*,v',v_{u+r+1\ldots n})\in \V_u^l(x_{1\ldots n})$.  
Evidently, $v'$ above involves no $x_i$ for $i>u+r$.
Thus, unlike in \eqref{eq:l-tie},  in addition to  setting $v_u=l$
and taking $v^*_i\in t(i,v_{i+1})$ for $i=u-1,u-2,\ldots,1$ we may
have to ``realign'' $u+1^{st}$, $\ldots$, $u+r^{th}$ components in
order  for the modified string to remain in  $\V(x_{1\ldots n})$.
Moreover, $v^*$ need not  belong to $\V(x_{1\ldots u})$.
Clearly, \eqref{eq:l-tier} implies \eqref{eq:node-Vr}. Finally,
given \eqref{eq:node-Vr}, Proposition \ref{joop0} yields
\eqref{joopr}.
\end{proof}
\begin{corollary}\label{r-prep}
For any fixed $s\in S$, Proposition~\ref{nodepropr} remains valid after replacing 
$\pi$ by $(p_{si})_{i\in S}$, wherever appropriate.  In particular, 
\begin{eqnarray*}
u+r<n, x_u \text{ is an}~l\text{-node of order }r
& \Rightarrow &\emptyset \neq \V_{(s)u}^{\,\,\,\,\,\,\,l}(x_{1\ldots n})=\\
&&=\W^{\,\,\,\,\,\,\,l}_{(s)u}(x_{1\ldots u})  \times \V_{(l)}(x_{u+1\ldots n}).
\end{eqnarray*}
\end{corollary}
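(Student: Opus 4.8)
The plan is to observe that Proposition~\ref{nodepropr}, and every ingredient its proof uses, depends on the initial distribution $\pi$ only through the single base case $\delta_l(1)=\pi_l f_l(x_1)$: the recursion \eqref{eq:prrecurse2} for the scores, the definitions \eqref{eq:tr} of $t^{(r)}$, the characterizations \eqref{viimane2} and \eqref{viimane2a}, and Propositions~\ref{joop0}, \ref{nodeprop} and Corollary~\ref{joop3} all involve $\delta$ and the $p^{(r)}_{ij}(u)$ but no further feature of $\pi$. Hence replacing $\pi$ throughout by the probability mass function $(p_{si})_{i\in S}$ --- equivalently, reading $\delta$, $t^{(r)}$, $\V$, $\W$ in the $(s)$-subscripted sense fixed in \S\ref{sec:va_nodes} --- carries the statement and the proof of Proposition~\ref{nodepropr} over verbatim. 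In this reading ``$x_u$ is an $l$-node of order $r$'' is understood with the scores $\delta$ computed from $(p_{si})_{i\in S}$.

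Concretely, I would re-run the proof of Proposition~\ref{nodepropr} with $\pi\mapsto(p_{si})_{i\in S}$ and $\V_u^l,\W_u^l$ relabelled $\V_{(s)u}^{\,\,\,\,\,\,\,l},\W_{(s)u}^{\,\,\,\,\,\,\,l}$. Nothing changes except that the base case becomes $\delta_l(1)=p_{sl}f_l(x_1)$; the relations \eqref{eq:node-tr}--\eqref{joopr} then hold in the $(s)$-version. The one point worth flagging is that the suffix factor $\V_{(l)}(x_{u+1\ldots n})$ occurring in \eqref{joopr} is intrinsic to $x_{u+1\ldots n}$ and the state $l$ --- it is already, by the convention of \S\ref{sec:va_nodes}, the set of alignments of $x_{u+1\ldots n}$ with initial law $(p_{li})_{i\in S}$ --- so it requires no substitution and is the same object in the original and in the replaced statement. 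The ``in particular'' claim is then immediate: for $u+r<n$ and $x_u$ an $l$-node of order $r$, the $(s)$-version of \eqref{eq:node-Vr} gives $\V_{(s)u}^{\,\,\,\,\,\,\,l}(x_{1\ldots n})\neq\emptyset$ and the $(s)$-version of \eqref{joopr} gives $\V_{(s)u}^{\,\,\,\,\,\,\,l}(x_{1\ldots n})=\W^{\,\,\,\,\,\,\,l}_{(s)u}(x_{1\ldots u})\times\V_{(l)}(x_{u+1\ldots n})$.

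There is no real obstacle here --- the content is simply the robustness of the Viterbi-algorithm bookkeeping under a change of initial distribution, which is why the text phrases it as a corollary. The only care needed is bookkeeping: confirming that $\pi$ enters the proof of Proposition~\ref{nodepropr} nowhere except in $\delta_l(1)$, and being explicit that the hypothesis ``node of order $r$'' is relative to $(p_{si})_{i\in S}$ rather than to $\pi$.
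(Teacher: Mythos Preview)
Your proposal is correct and matches the paper's intent: the corollary is stated without proof precisely because the argument is the trivial observation you spell out, namely that $\pi$ enters the proof of Proposition~\ref{nodepropr} only through the initialization $\delta_l(1)=\pi_l f_l(x_1)$, so everything carries over verbatim with $(p_{si})_{i\in S}$ in its place. Your remark that the node hypothesis must be read relative to the replaced initial distribution is exactly the meaning of the paper's qualifier ``wherever appropriate.''
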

\begin{corollary}\label{r-good}
Let $u_i+r_i<u_{i+1}$ $i=1,\ldots,k-1$, and $u_k+r_k<n$, and 
suppose $x_{1\ldots n}$ is such that  the observations $x_{u_i}$
are $l_i$-nodes of order $r_i$, for $i=1,\ldots,k$. Then
\begin{eqnarray}\label{r-pieces}
\nonumber  \emptyset \ne \V_{u_1u_2\cdots u_k}^{l_1l_2\cdots l_k}(x_{1\ldots n})=&&\\
=\W_{u_1}^{l_1}(x_{1\ldots u_1})\times \W_{(l_1)u_2}^{\,\,\,\,\,\,\,\,\,l_2}(x_{u_1+1\ldots u_2})\times\cdots\times
\W_{(l_{k-1})u_k}^{\,\,\,\,\,\,\,\,\,\,\,\,\,\,l_k}(x_{u_{k-1}+1\ldots u_k})\times
\V_{(l_k)}(x_{u_k+1\ldots n}).&&
\end{eqnarray}
\end{corollary}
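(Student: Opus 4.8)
The plan is to separate the problem into a ``tail'' part, handled by the single‑node decomposition already proved, and a ``head'' part that reduces to a purely finite statement about constrained likelihood maximizers on $x_{1\ldots u_k}$, so that one never has to re‑examine the node property for shifted subsequences. First I would apply relation \eqref{joopr} of Proposition~\ref{nodepropr} to the last node: since $x_{u_k}$ is an $l_k$-node of order $r_k$ and $u_k+r_k<n$, one has $\V_{u_k}^{l_k}(x_{1\ldots n})=\W_{u_k}^{l_k}(x_{1\ldots u_k})\times\V_{(l_k)}(x_{u_k+1\ldots n})$. Intersecting both sides with the events $\{v:v_{u_i}=l_i\}$, $i=1,\ldots,k-1$, which involve only coordinates $\le u_{k-1}<u_k$, the assertion \eqref{r-pieces} becomes equivalent to the finite identity
\begin{equation}\label{eq:plan-star}
\W_{u_k}^{l_k}(x_{1\ldots u_k})\cap\textstyle\bigcap_{i<k}\{v:v_{u_i}=l_i\}=\W_{u_1}^{l_1}(x_{1\ldots u_1})\times\prod_{i=1}^{k-1}\W_{(l_i)u_{i+1}}^{\,l_{i+1}}(x_{u_i+1\ldots u_{i+1}})
\end{equation}
together with non‑emptiness of its left‑hand side; since $\V_{(l_k)}(x_{u_k+1\ldots n})$ is always non‑empty, this also yields $\V_{u_1\ldots u_k}^{l_1\ldots l_k}(x_{1\ldots n})\neq\emptyset$.

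Next I would prove the key ``no loss of likelihood'' fact: imposing the extra endpoint constraints does not lower the constrained maximum, i.e. $\max\{\Lambda(v;x_{1\ldots u_k}):v_{u_k}=l_k,\ v_{u_i}=l_i\ \forall i<k\}=\delta_{u_k}(l_k)$. By part~1 of Proposition~\ref{rnode_propty}, each $x_{u_i}$ with $i<k$, being an $l_i$-node of order $r_i$ with $u_i+r_i<u_{i+1}$, is also an $l_i$-node of order $u_{i+1}-u_i-1\ (\ge r_i)$; hence \eqref{eq:node-tr} gives $l_i\in t^{(u_{i+1}-u_i)}(u_i,j)$ for every $j\in S$, in particular $l_i\in t^{(u_{i+1}-u_i)}(u_i,l_{i+1})$. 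Combining this with \eqref{eq:trecurse} and the description $\W_{u_k}^{l_k}(x_{1\ldots u_k})=\{v\in S^{u_k}:v_{u_k}=l_k,\ v_u\in t(u,v_{u+1})\ \forall u<u_k\}$ from \eqref{eq:constrainedmle} (equivalently \eqref{viimane2a}), I would construct, working backwards from $v_{u_k}=l_k$ and inserting $v_{u_i}=l_i$ for $i=k-1,\dots,1$, a path $v$ with $v_u\in t(u,v_{u+1})$ throughout (the initial segment $1,\ldots,u_1$ being fillable because the sets $t(u,\cdot)$ are non‑empty). Such a $v$ lies in the left‑hand side of \eqref{eq:plan-star}, which proves it non‑empty; and since every element of $\W_{u_k}^{l_k}(x_{1\ldots u_k})$ attains the value $\delta_{u_k}(l_k)$, the left‑hand side of \eqref{eq:plan-star} coincides with $\arg\max\{\Lambda(v;x_{1\ldots u_k}):v_{u_k}=l_k,\ v_{u_i}=l_i\ \forall i<k\}$.

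Finally I would factor this constrained maximization by the Markov property, exactly as in the proof of Proposition~\ref{joop0}: for any $v$ with $v_{u_i}=l_i$, $i<k$, one has
$$\Lambda(v;x_{1\ldots u_k})=\Lambda(v_{1\ldots u_1};x_{1\ldots u_1})\prod_{i=1}^{k-1}\Lambda\bigl(v_{u_i+1\ldots u_{i+1}};x_{u_i+1\ldots u_{i+1}}\mid l_i\bigr).$$
Under this factorization the feasible set $\{v:v_{u_k}=l_k,\ v_{u_i}=l_i\ \forall i<k\}$ becomes a product of blocks coupled only through the fixed endpoints $l_1,\dots,l_k$, so the $\arg\max$ of the product equals the product of the block‑wise $\arg\max$'s; these are precisely $\W_{u_1}^{l_1}(x_{1\ldots u_1})$ and the $\W_{(l_i)u_{i+1}}^{\,l_{i+1}}(x_{u_i+1\ldots u_{i+1}})$ (the $i$-th of which uses $(p_{l_i j})_j$ as initial distribution, matching the conditioning ``$\mid l_i$''). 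This establishes \eqref{eq:plan-star}, and substituting back into the first step gives \eqref{r-pieces}.

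I expect the genuine obstacle to be the middle step. In the order‑$0$ setting of Corollary~\ref{good} the analogue is essentially automatic because \eqref{joop2} converts each non‑empty constrained set $\V_{u_i}^{l_i}(\cdot)$ directly into the corresponding $\W$-set, so the multiply‑constrained maximizers fall out of \eqref{eq:decomp} applied repeatedly. For $r^{\mathrm{th}}$-order nodes that shortcut is gone — a prefix up to an order‑$r$ node need not be an alignment of its own observations — so one has to show directly that the several endpoint constraints $v_{u_i}=l_i$ can be imposed simultaneously on a constrained Viterbi maximizer of $x_{1\ldots u_k}$ without loss, and this is exactly where the spacing hypothesis $u_i+r_i<u_{i+1}$ is used (it forces $l_i\in t^{(u_{i+1}-u_i)}(u_i,j)$ for all $j$, which is what allows the backward construction to pass through every prescribed state). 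An alternative, probably messier, route would induct on $k$, peeling the \emph{first} node via \eqref{joopr} and Corollary~\ref{r-prep}, but that requires first checking that the node property survives passage to the subsequence $x_{u_1+1\ldots n}$ with initial law $(p_{l_1 j})_j$ — a transfer fact that the direct argument above circumvents.
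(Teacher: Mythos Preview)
Your argument is correct and takes a route genuinely different from the paper's. The paper peels off nodes from the \emph{front}: it first establishes non-emptiness of $\V_{u_1\ldots u_k}^{l_1\ldots l_k}(x_{1\ldots n})$ (citing \eqref{eq:node-Vr}, with \eqref{eq:l-tier} used implicitly and iteratively from $i=k$ down to $1$), then applies \eqref{joopr} at $u_1$ to split off $\W_{u_1}^{l_1}(x_{1\ldots u_1})$, and repeats on the remaining factor $\V_{(l_1)u_2\ldots u_k}^{\,l_2\ldots l_k}(x_{u_1+1\ldots n})$ via Corollary~\ref{r-prep}. You instead peel the \emph{last} node once via \eqref{joopr}, reduce everything to the finite constrained maximization on $x_{1\ldots u_k}$, and then factor that directly by the Markov property --- essentially a $k$-fold version of Proposition~\ref{joop0}. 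Your backward construction (upgrading each $x_{u_i}$ to an $l_i$-node of order $u_{i+1}-u_i-1$ via part~1 of Proposition~\ref{rnode_propty}, then reading off $l_i\in t^{(u_{i+1}-u_i)}(u_i,l_{i+1})$ from \eqref{eq:node-tr} and unraveling with \eqref{eq:trecurse}) is a clean way to obtain non-emptiness that makes the role of the spacing hypothesis completely transparent. The paper's iterative route is more modular, but as you correctly flag in your final paragraph, each invocation of Corollary~\ref{r-prep} on a shifted subsequence tacitly presumes that $x_{u_{i+1}}$ remains an $l_{i+1}$-node of order $r_{i+1}$ relative to $x_{u_i+1\ldots n}$ with initial law $(p_{l_ij})_j$; this transfer does hold (the node at $u_i$ forces the shifted scores at $u_{i+1}$ to be proportional to $\delta_{u_{i+1}}(\cdot)$), but it is not spelled out. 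Your argument sidesteps that issue entirely.
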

\begin{proof}
By \eqref{eq:node-Vr}, we have
$$\V_{u_i}^{l_i}(x_{1\ldots n})\ne \emptyset \quad i=1,\ldots,k.$$ Hence,
$$\emptyset \ne \V_{u_1u_2\cdots u_k}^{l_1l_2\cdots l_k}(x_{1\ldots n}).$$
By \eqref{joopr}, 
$$\V_{u_1u_2\cdots u_k}^{l_1l_2\cdots l_k}(x_{1\ldots n})=\W_{u_1}^{l_1}(x_{1\ldots u_1})\times
\V_{(l_1)u_2\cdots u_k}^{\,\,\,\,\,\,\,\,\,l_2\cdots l_k}(x_{u_1+1\ldots n}).$$
Apply Corollary~\ref{r-prep} to get 
$$\V_{(l_1)u_2\cdots u_k}^{\,\,\,\,\,\,\,\,\,l_2\cdots l_k}(x_{u_1+1\ldots n})=\W_{(l_1)u_2}^{\,\,\,\,\,\,\,\,\,l_2}(x_{u_1+1\ldots u_2})\times
\V_{(l_2)u_3\cdots u_k}^{\,\,\,\,\,\,\,\,\,l_3\cdots l_k}(x_{u_2+1\ldots n}),$$ and repeat similarly to get
$$\V_{(l_i)u_{i+1}\cdots u_k}^{\,\,\,\,\,\,\,\,\,l_{i+1}\cdots l_k}(x_{u_i+1\ldots n})=
\W_{(l_i)u_{i+1}}^{\,\,\,\,\,\,\,\,\,l_{i+1}}(x_{u_i+1\ldots u_{i+1}})\times
\V_{(l_{i+1})u_{i+2}\cdots u_k}^{\,\,\,\,\,\,\,\,\,\,\,\,\,\,l_{i+2}\cdots l_k}(x_{u_{i+1}+1\ldots n})$$ 
for $i=2,\ldots,k-1$, yielding the desired result.
\end{proof}
\\\\
Thus, the assumptions of Proposition~\ref{nodepropr} and 
Corollary \ref{r-good} establish the existence of piecewise alignments 
\begin{equation}\label{r-piecesI}
v=(v_1,\ldots,v_{k+1})\in \V(x_{1\ldots n}),
\end{equation}
 where
$v_1\in \W_{u_1}^{l_1}(x_{1\ldots u_1}),\,v_2\in
\W_{(l_1)u_2}^{\,\,\,\,\,\,\,\,\,l_2}(x_{u_1+1\ldots u_2}),\ldots$, $v_k\in
\W_{(l_{k-1})u_k}^{\,\,\,\,\,\,\,\,\,\,\,\,\,\,l_k}(x_{u_{k-1}+1 \ldots u_k})$, $v_{k+1}\in
\V_{(l_{k})}(x_{u_{k}+1\ldots n})$.
Moreover, for every $i=1,\ldots,k$, the vectors
$w(i)\stackrel{\mathrm def}{=}(v_1,\ldots,v_i)$ satisfy
$w(i)\in \W_{u_i}^{l_i}(x_{1\ldots u_i})$ and 
$w(i)_{1\ldots u_{i-1}}=w(i-1), \quad i=2,\ldots,k$.
Since $w(i)$ does not depend on $x_{u_i+r_i+1},\ldots,
x_n$ and as long as $x_1,\ldots, x_{u_i+r_i}$ are such that $x_{u_i}$ is
a  node of order-$r_i$, an alignment $v(x_{1\ldots n})$ can always be found such 
that $v_{1\ldots u_i}=w(i)$.
\\
\begin{definition}\label{piecewise} Any alignment of the form in \eqref{r-piecesI} is called
a piecewise alignment based on  nodes
$x_{u_1},\ldots,x_{u_k}$ of orders $r_1,\ldots,r_k$, respectively. 
\end{definition}
Recall that we have previously  fixed the selection scheme $\vee$
\eqref{eq:selectmax}.
Based on this selection scheme, we will concern ourselves in \S\ref{sec:infal} with proper (Definition~\ref{proper})
piecewise (Definition~\ref{piecewise}) alignments (that are based on nodes of possibly non-zero orders) formally
defined as follows:
\begin{definition} \label{l6plik}
  \begin{eqnarray*}
    \label{eq:finitealign}
v(x_{1\ldots n})&\stackrel{\mathrm{def}}{=}&(\vee\W_{u_1}^{l_1}(x_{1\ldots u_1}),
 \vee\W_{(l_1)u_2}^{\,\,\,\,\,\,\,\,\,l_2}(x_{u_1+1\ldots u_2}),\ldots,\\
&& \vee\W_{(l_{k-1})u_k}^{\,\,\,\,\,\,\,\,\,\,\,\,\,\,\,\,l_k}(x_{u_{k-1}+1\ldots u_k}),\vee \V_{(l_k)}(x_{u_k+1\ldots n}))
\in \V_{u_1\ldots u_k}^{l_1\ldots l_k}(x_{1\ldots n}),  
  \end{eqnarray*}
for $k>0$, and 
$v(x_{1\ldots n})\stackrel{\mathrm{def}}{=}\vee \V(x_{1\ldots n})$ for $k=0$.
\end{definition}
To summarize the above, recall that by defining nodes (of various orders) we aim at 
extending alignments at infinitum, and we would like to do this for as wide a class of 
HMM's with irreducible and aperiodic hidden layers as possible. Requiring  $l$-nodes of 
order 0 immediately restricts the transition probabilities  by requiring $p_{lj}>0$ for 
$\forall j\in S$.  However, this restriction disappears with the introduction of
nodes of order $r$ for sufficiently large $r$. Indeed, suppose that 
$\forall u$ $0<u\le n$, we have $\delta_u(j)>0$ $\forall j\in S$ (which in particular implies 
$f_j(x_u)>0$ $\forall j\in S$ $\forall u$ $0<u\le n$). Then, $x_u$ being 
an $l$-node of order $r$, and irreducibility of the underlying chain, imply
$p^{(r)}_{lj}(u)>0$ $\forall j\in S$. The latter in turn implies  that  $r_{lj}>0$ 
for every $j\in S$, where $r_{lj}$ is the $lj^{\mathrm{th}}$ entry of $\mathbb{P}^r$.
Thus, having an $l$-node of order $r$ for {\bf some} $r$ does not impose 
any restriction on $\mathbb{P}$: by virtue of irreducibility and aperiodicity of $\mathbb{P}$, 
there always exists  $r_0$ such that $P$ has all of its entries positive for 
every $r\ge r_0$.\\
\subsection{Barriers}
By Corollary \ref{r-good}, $x_{u}$ being a node of order $r$ fixes the
alignment up to $u$ for any possible continuation of
$x_{1 \ldots u+r}$. However, changing the value of an
observation before $x_{u+r+1}$, say $x_1$ or $x_{u+r},$ can
prevent $x_{u}$ from being the node. Moreover, in general nothing
guarantees that for an arbitrary prefix $x'_{1\ldots w}\in \mathcal{X}^w$, 
$w+u$-th element of $(x'_{1\ldots w},x_{1\ldots u+r})$ would be a node of order $r$. 
On the other hand, a block of observations $x^b_{1\ldots k}\in \mathcal{X}^k$ ($k\ge r$) can be such that
for any $w>0$ and for any $x'_{1\ldots w}\in \mathcal{X}^w$, $w+k-r$-th element of $(x'_{1\ldots w},x^b_{1\ldots k})$
is a node of order $r$. $x^b_{1\ldots k}$ in that case will be
called {\it a barrier}. 
\begin{definition}\label{def:barrier}
A block of observations $x^b_{1\ldots k}\in \mathcal{X}^k$ ($k\ge r$) is called an $l$-barrier of order $r$ and length $k$
if for any $w>0$ and for any $x'_{1\ldots w}\in \mathcal{X}^w$, $w+k-r$-th element of $(x'_{1\ldots w},x^b_{1\ldots k})$
is an $l$-node of order $r$.
\end{definition}
\subsection{Existence of barriers }
\label{sec:barriers}
In this section, we state  the main technical result of the
paper.
For each $i\in S$, we denote by $G_i=\cap_{G\text{-closed},~P_i(G)=1}G$, 
the support of $P_i$.
\begin{definition}We define a subset $C\subset S$ to be {\it a cluster}, if, simultaneously,
$$\min_{j\in C}P_j(\cap _{i\in C}G_i\cap \{x\in \mathcal{X}:~f_i(x)>0\})>0,\quad {\rm and}\quad   P_j(\cap _{i\in C}G_i)=0\quad \forall j\not \in C.$$
\end{definition}
(Note that $C$ is well-defined that is, if the first condition is satisfied with one choice of density functions 
$f_i$, it will certainly be satisfied with any other choice of densities 
$g_i$ since $\lambda(\{x\in\mathcal{X}:~f_i\neq g_i\})=0$ for all $i\in S$.)
%
Hence, a cluster is a maximal subset of states such that the
corresponding emission distributions have a ''detectable''
intersection of their supports $G_C=\cap _{i\in C}G_i$. 
Clusters need not necessarily be disjoint and  a
cluster can consist of a single state. In this latter case such a state is not
hidden: Any emission from this state reveals it. 
If $K=2$, then, for an HMM, there is only one cluster
(otherwise the underlying Markov chain would not be hidden as all
observations reveal their states).
In many cases in practise there is only one cluster, that is $S$.
\\
A proof of Lemma~\ref{neljas} below is given
in Appendix \ref{sec:proofneljas}.
\begin{lemma}\label{neljas} Assume that for each state $l\in S$,
\begin{align}\label{lll}
& P_l\left(x:f_l(x)\max_{j}\{p_{jl}\}> \max_{i,i\ne
l}\{f_i(x)\max_{j}\{p_{ji}\}\}\right)>0.
\end{align}
Moreover, assume that there exist a cluster $C\subset S$ and a finite
integer $m<\infty$ such that the $m$-th power of the
sub-stochastic matrix $\mathbb{Q}=(p_{ij})_{i,j\in C}$ has all of its
entries  non-zero.  Then, for some integers $M$ and $r$, $M>r\geq 0$, 
there exist a set 
$B=B_1\times \cdots \times B_M \subset {\cal
X}^M$, an $M$-tuple of states $q_{1\ldots M}\in S^M$, and a state
$l\in S$, such that every vector $y\in B$ is
an $l$-barrier of order $r$ , $q_{M-r}=l$ and
\begin{equation*}
{\bf P}\Bigl((X_1,\ldots,X_M)\in {\cal
Y}\Big|Y_1=q_1,\ldots,Y_M=q_M\Bigl)>0, \quad {\bf
P}(Y_1=q_1,\ldots,Y_M=q_M)>0.
\end{equation*}
\end{lemma}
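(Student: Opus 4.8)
The plan is to construct the barrier block explicitly, by first choosing a ``good'' path of hidden states inside the cluster along which the emission densities can be made to dominate, and then showing that a block of observations drawn from small enough neighbourhoods along this path forces an $l$-node of the required order regardless of what precedes it. First I would use the cluster hypothesis: since $C$ is a cluster, the set $A=\cap_{i\in C}G_i\cap\{f_i>0\ \forall i\in C\}$ has $P_j(A)>0$ for every $j\in C$, and $P_j(\cap_{i\in C}G_i)=0$ for $j\notin C$. On $A$ every emission density $f_i$, $i\in C$, is strictly positive, which is exactly what is needed to make the products $p^{(r)}_{ij}(u)$ positive (via irreducibility) once $r$ is large enough; recall from the discussion after Definition~\ref{rnode} that having $\delta_u(j)>0$ for all $j$ together with $x_u$ being an $l$-node of order $r$ forces $p^{(r)}_{lj}(u)>0$, and conversely positivity of the relevant $p^{(r)}$'s is what we will arrange. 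I would pick $r$ (and then $M>r$) so that $\mathbb{P}^r$ has a positive $l$-th row for the state $l$ we are about to select, using the assumption that $\mathbb{Q}^m$ has all entries nonzero together with aperiodicity/irreducibility of $\mathbb{P}$.

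Next I would use assumption~\eqref{lll}. For each $l$, \eqref{lll} says that with positive $P_l$-probability the ``weighted'' density $f_l(x)\max_j p_{jl}$ strictly beats $f_i(x)\max_j p_{ji}$ for all $i\ne l$. Intersecting the set where this strict inequality holds with $A$ (for the particular $l$ that lies in the cluster — note at least one $l$ can be chosen in $C$), and using that $A$ carries positive mass under the relevant emission law, I would extract a point $z$ and a small closed neighbourhood $B_{M-r}$ of it such that the strict domination persists uniformly on $B_{M-r}$, with a definite margin $\rho>1$ in the ratio. The idea is that at the node position $u=w+k-r$ this uniform strict domination, propagated through the Viterbi recursion \eqref{rec}/\eqref{eq:prrecurse2}, will overwhelm the incoming scores $\delta_u(i)$ no matter what the prefix $x'_{1\ldots w}$ was — provided we give ourselves enough ``lead-in'' coordinates $B_1,\ldots,B_{M-r-1}$ all drawn from $A$ so that the scores $\delta$ at time $M-r$ are comparable across states up to a controlled constant, and enough ``trailing'' coordinates $B_{M-r+1},\ldots,B_M$ (again from $A$) so that the order-$r$ extension $p^{(r)}_{lj}$ dominates $p^{(r)}_{ij}$ for every $i\ne l$, $j\in S$. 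The $M$-tuple $q_{1\ldots M}$ is then the concatenation: an entry path into the cluster, a path through $C$ realizing $\mathbb{Q}^{m}$-positivity (this is where the length $M$ and the requirement $q_{M-r}=l$ come from), and a continuation; positivity of $\mathbf{P}(Y_1=q_1,\ldots,Y_M=q_M)$ follows by irreducibility, and positivity of the conditional emission probability $\mathbf{P}(X_{1\ldots M}\in B\mid Y_{1\ldots M}=q)=\prod \int_{B_t} f_{q_t}\,d\lambda$ follows because each $B_t$ was chosen inside $A$ (so $f_{q_t}>0$ there on a set of positive $\lambda$-measure) and each $q_t\in C$.

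The technical heart — and the main obstacle — is the uniform-over-all-prefixes estimate: I must show that for a suitable choice of the sizes of the neighbourhoods $B_t$ and of $M$ and $r$, the inequality $\delta_u(l)p^{(r)}_{lj}(u)\ge \delta_u(i)p^{(r)}_{ij}(u)$ (Definition~\ref{rnode}) holds for every $i,j\in S$ and every $x'_{1\ldots w}$, $w>0$. The delicate point is that $\delta_u(i)$ depends on the unknown prefix, but along the lead-in block (all coordinates in $A$, where every $f_i>0$) the recursion \eqref{rec} forces $\delta_{u}(i)/\delta_{u}(l)$ to lie in a bounded range determined only by the $f$-ratios on $A$, the prefix contributing at worst a common factor; then picking $B_{M-r}$ with margin $\rho$ large enough (equivalently, the neighbourhood small enough) beats that bounded range, and the trailing block handles the $p^{(r)}$-factors via the positivity arranged in step one. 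I expect that controlling these ratios uniformly — and in particular making precise how large a margin $\rho$, hence how small the neighbourhoods, and how long the lead-in/trailing blocks (hence how large $M$) must be — is the part that requires the most care; everything else (positivity of the path probability, positivity of the emission integrals, the bookkeeping identifying $q_{M-r}=l$) is routine given the cluster structure and irreducibility.
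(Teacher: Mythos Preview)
Your outline captures the right architecture --- cluster buffer to tame the incoming score ratios, a distinguished ``node'' coordinate, and a trailing buffer to handle the order-$r$ extension --- and this is indeed how the paper proceeds. But there is a genuine gap at the heart of the argument. You write that ``picking $B_{M-r}$ with margin $\rho$ large enough (equivalently, the neighbourhood small enough) beats that bounded range.'' This does not work: assumption~\eqref{lll} only gives you a \emph{strict} inequality on a set of positive $P_l$-measure, not an arbitrarily large ratio. Shrinking the neighbourhood around a point $z$ makes the margin converge to its value at $z$, which is some fixed number $\rho_0>1$; it does not send $\rho\to\infty$. Meanwhile the bound you correctly anticipate on $\delta_u(i)/\delta_u(l)$ after the lead-in block is of order $q^{-1}(K/\delta)^m$ (this is the paper's $\beta$-bound), and there is no reason $\rho_0$ should dominate it.

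The paper's remedy is the step you are missing: rather than relying on a single observation, it constructs a cycle $s_1,\ldots,s_L$ of states along which every transition is of maximal probability into its target (i.e.\ $p_{s_{i-1}s_i}=\max_j p_{js_i}$), and then repeats this cycle $2k$ times with observations drawn from the corresponding $\mathcal{X}_{s_i}$. Each pass through the cycle contributes a factor $(1-\epsilon)$ to any competing path, so after $k$ cycles the accumulated advantage is $(1-\epsilon)^{k-1}$, which \emph{can} be made smaller than $q^{2}(\delta/K)^{2m}A^{-R}$ by choosing $k$ large. The max-probability property of the cycle transitions is essential for the ratio inequalities (the paper's \eqref{krim}--\eqref{tsiv}) to hold. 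Your lead-in/trailing buffers and the positivity bookkeeping are fine, but without this amplification mechanism the node inequality $\delta_u(l)p^{(r)}_{lj}(u)\ge\delta_u(i)p^{(r)}_{ij}(u)$ cannot be forced uniformly over prefixes.
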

Lemma \ref{neljas} implies that $\P\bigl( (X_1,\ldots,X_M)\in
B\bigr)>0$. Also, since every element of $B$ is a 
barrier of order $r$, the ergodicity of $X$ therefore guarantees  a.e. realization of $X$ 
to contain infinitely many  $l$-barriers of order $r$. Hence, a.e.  realization of $X$ 
also has infinitely many  $l$-nodes of order $r$. 
\subsubsection{Separated barriers}
If we were to apply Corollary \ref{r-good} to a realization with infinitely
many $l$-nodes of order $r$, we would first need to ensure 
that $u_{i+1}>u_i+r$ for $i=1,2,\ldots$, where $u_i$'s are the locations of the nodes.
Obviously, one can easily select a subsequence of those nodes to enforce this condition.
For certain technical reasons related to the construction of the infinite alignment process
(\S\ref{sec:alignmentprocess}), we, however, choose first to define special barriers for which 
the above ''separation'' condition is always satisfied. Then, we give a formal statement 
(Lemma \ref{separated} below) guaranteeing
that these separated barriers  occur also infinitely often.
Let $B\subset {\cal X}^{M}$ and $M$ and $r$ be as in Lemma
\ref{neljas}. Assume that for some $l\in S$ and some $j>0$
$x_{j\ldots j+M-1}\in B$, i.e. $x_{j\ldots j+M-1}$ is an
$l$-barrier of order $r$, and $x_{j+M-r-1}$ is an
$l$-node of order $r$. However, it might happen that for some $i$, $j\le i\le
j+r,$ $x_{i\ldots i+M-1}$ is also in $B$. Then
$x_{i+M-r-1}$ is another node of order $r$. In this case,
${i+M-r-1}-(j+M-r-1)\leq r$ and Corollary \ref{r-good} can not be
used  (in the presence of ties) with these two nodes simultaneously.
\begin{definition}\label{def:sepbar}
Let $B^* \subset \mathcal{X}^N$ such that all its elements are
$l$-barriers of order $r$ for some $l\in S$ and $r\le N$.
We say that $x^b_{1\ldots N}\in B^*$ is 
separated 
(relative to $B^*$) if for any $w$, $1\le w\le r$, and for any 
$x'_{1\ldots w}\in \mathcal{X}^w$ the concatenated block 
$(x'_{1\ldots w},x^b_{1\ldots N-w})\not \in B^*$.
\end{definition}
Thus, roughly, a barrier is separated, if it is at least $r+1$ steps apart
from any preceding $B^*$ barrier.  

Suppose $B\subset {\cal X}^M$ is such that 
every $x^b_{1\ldots M}\in B$ is a barrier. The
barriers from $B$ need not in general be separated.  However, 
it can be possible to extend these barriers to 
make them separated relative to 
their own set $B^*$. For example, suppose further 
that there exists $x\in {\cal X}$ such that no 
$y\in B$ contains $x$, i.e. 
$x^b_i\ne x$ $i=1,\ldots, M$. All the elements of 
$B^*\stackrel{\mathrm{def}}{=}\{x\}\times B$ are evidently barriers,
and moreover, they are now also separated (relative to  $B^*$).

This will be used in  Appendix \S\ref{sec:separatedproof} to prove Lemma~\ref{separated} given below, 
and which states that under the assumptions of  Lemma \ref{neljas},  separated barriers are also guaranteed
to occur. In other words, a.e. realization of $X$ has infinitely many separated barriers.
\begin{lemma}\label{separated}
Suppose  the assumptions of Lemma \ref{neljas} are satisfied. 
Then, for some integers $M$ and $r$, $M>r\geq 0$, there exist a set 
$B=B_1\times \cdots \times B_M \subset {\cal
X}^M$, an $M$-tuple of states $q_{1\ldots M}\in S^M$, and a state
$l\in S$, such that every vector $y\in B$ is
a separated (relative to $B$) $l$-barrier of order $r$, $q_{M-r}=l$ and
\begin{equation*}
{\bf P}\Bigl((X_1,\ldots,X_M)\in B\Big|Y_1=q_1,\ldots,Y_M=q_M\Bigl)>0, 
\quad {\bf P}(Y_1=q_1,\ldots,Y_M=q_M)>0.
\end{equation*}
\end{lemma}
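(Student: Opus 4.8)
The plan is to derive Lemma~\ref{separated} from Lemma~\ref{neljas} by the "extension" trick sketched in the paragraph preceding the statement: take the barrier set $B=B_1\times\cdots\times B_M$ guaranteed by Lemma~\ref{neljas}, together with its state $M$-tuple $q_{1\ldots M}$ and state $l$ (with $q_{M-r}=l$), and prepend a constant block of length $r$ so that the extended barriers become separated. First I would note that it suffices to exhibit \emph{some} admissible enlargement; since the conclusion of Lemma~\ref{separated} again only requires existence, we are free to grow both $M$ and the ambient block.

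The key steps, in order. Step 1: apply Lemma~\ref{neljas} to obtain $M$, $r$, the rectangle $B=B_1\times\cdots\times B_M$, the path $q_{1\ldots M}\in S^M$, and the state $l$ with $q_{M-r}=l$, $\mathbf{P}(Y_{1\ldots M}=q_{1\ldots M})>0$, and $\mathbf{P}((X_1,\ldots,X_M)\in B\mid Y_{1\ldots M}=q_{1\ldots M})>0$. Step 2: choose a prefix block $B_{-(r-1)},\ldots,B_0\subset\mathcal{X}$ of length $r$ and a prefix path $q_{-(r-1)},\ldots,q_0\in S$ such that the concatenated path $q_{-(r-1)\ldots M}$ has positive probability and the concatenated emission event has positive conditional probability; the natural choice is to use states whose emission distributions charge sets disjoint from the ``content'' of the $B_i$'s — concretely, pick a state $s$ and a Borel set $B_0\subset\mathcal{X}$ with $P_s(B_0)>0$ and $B_0\cap(B_1\cup\cdots\cup B_M)=\emptyset$ (possible because $\mathcal{X}$ is a separable metric space and the $B_i$ have positive but not full $P$-mass after shrinking if necessary), take the prefix block to be $B_0\times\cdots\times B_0$ ($r$ copies), and use irreducibility of $Y$ to route from some state into $s$ and from $s$ into $q_1$ with positive probability, padding to total prefix length exactly $r$. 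Step 3: set $\tilde B\stackrel{\mathrm{def}}{=}B_0\times\cdots\times B_0\times B_1\times\cdots\times B_M\subset\mathcal{X}^{M+r}$, with $\tilde M=M+r$ and the same order $r$; since prepending observations in front of an existing order-$r$ barrier preserves the barrier property (Definition~\ref{def:barrier} quantifies over \emph{all} prefixes $x'_{1\ldots w}$ of every length, and an order-$r$ node depends only on $x_1,\ldots,x_{u+r}$ by Proposition~\ref{nodepropr}), every $y\in\tilde B$ is still an $l$-barrier of order $r$, and the node position within it is $\tilde M-r=M$, with the underlying state there being $q_{M-r}$ after re-indexing — so I would re-index the extended path so that the distinguished state at position $\tilde M-r$ is $l$. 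Step 4: verify separation relative to $\tilde B$: for $1\le w\le r$ and any $x'_{1\ldots w}\in\mathcal{X}^w$, the block $(x'_{1\ldots w},y_{1\ldots \tilde M-w})$ has as its $(w+1)$-st through $r$-th coordinates the original $y_{1},\ldots,y_{r-w}$, which all lie in $B_0$ — but coordinate $w$ of the candidate is $x'_w$, which need not lie in $B_0$, so one must instead inspect which coordinate \emph{must} fail; the correct observation is that $(x'_{1\ldots w},y_{1\ldots\tilde M-w})$ has its first coordinate after the prefix, namely coordinate $w+1$, equal to $y_1\in B_0$, whereas $\tilde B$ requires coordinate $w+1$ (for $w+1\le r$) to be in $B_0$ — that is satisfied — so separation must instead be forced by arranging that the \emph{tail} cannot align: here I would use that the last coordinate of $(x'_{1\ldots w},y_{1\ldots\tilde M-w})$ is $y_{\tilde M-w}$, which for $1\le w\le r$ equals one of $y_{M},\ldots,y_{M+r-1}$, i.e. lies in $B_M,\ldots$; since $\tilde B$ requires the last coordinate to lie in $B_M$ and the $(\tilde M-1)$-st in $B_{M-1}$, a shift by $w\le r$ forces comparing $B_i$ against $B_{i+w}$, and by choosing the original $B$ (via Lemma~\ref{neljas}, shrinking coordinates if needed, or via the $\{x\}\times B$ device already in the text) so that $B_M$ is disjoint from $B_{M-w}$ for $1\le w\le r$, the concatenation lands outside $\tilde B$. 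I would package this by simply invoking the explicit construction already given in the text ($B^*=\{x\}\times B$ with $x\notin\bigcup_i B_i$) generalized to an $r$-fold prefix $\{x\}^r\times B$, which makes separation transparent because any nontrivial left-shift by $w\le r$ puts a non-$x$ symbol (from the original $B$) into one of the first $r$ coordinates, which are all required to equal $x$.

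Finally, Step 5: check the two positivity conclusions. $\mathbf{P}(Y_{1\ldots\tilde M}=\tilde q_{1\ldots\tilde M})=\mathbf{P}(Y_{1\ldots r}=\text{prefix path})\,\mathbf{P}(Y_{r+1\ldots\tilde M}=q_{1\ldots M}\mid Y_r=\text{last prefix state})$, which is a product of a positive chain-path probability (by irreducibility and stationarity $\pi>0$ componentwise, since $Y$ is irreducible on a finite space — if $\pi$ has zeros one uses that the relevant transitions have positive probability and $\pi_{q_1}>0$ because every state is positive recurrent) and the positive quantity from Lemma~\ref{neljas}; and $\mathbf{P}((X_1,\ldots,X_{\tilde M})\in\tilde B\mid Y_{1\ldots\tilde M}=\tilde q)=\prod_{i=1}^{r}P_{\tilde q_i}(\{x\})\cdot\mathbf{P}((X_{r+1},\ldots,X_{\tilde M})\in B\mid Y_{r+1\ldots\tilde M}=q_{1\ldots M})$, which is positive provided the chosen prefix states each assign positive mass to $\{x\}$ — so the remaining task is to pick $x$ and the prefix states compatibly, i.e. find a state $s$ and a point $x\in\mathcal{X}$ with $P_s(\{x\})>0$ and $x\notin\bigcup_i B_i$, OR, if no such atom exists, replace $\{x\}$ by a positive-measure Borel set $A$ disjoint from $\bigcup_i B_i$ (possible after shrinking the $B_i$ slightly within their positive-measure budget, using separability of $\mathcal{X}$) and use $P_s(A)>0$.

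The main obstacle I expect is precisely this last compatibility point — producing a prefix block that is simultaneously (a) disjoint enough from the original barrier coordinates to force separation, and (b) charged with positive probability by some chain-reachable sequence of states. In the discrete/atomic case one can often literally take a single symbol $x$; in the continuous case one must argue that the positive-$\lambda$-measure sets $B_i$ from Lemma~\ref{neljas} can be trimmed to leave ``room'' for a disjoint positive-measure set emitted by a reachable state, which is where separability of $(\mathcal{X},\mathcal{B})$ and the non-degeneracy built into the cluster hypothesis of Lemma~\ref{neljas} get used. Everything else is bookkeeping: re-indexing the path so $q_{\tilde M-r}=l$, and observing that the barrier-of-order-$r$ and node-of-order-$r$ properties are preserved under prepending observations because, by Proposition~\ref{nodepropr}, whether $x_u$ is an $l$-node of order $r$ depends only on $x_1,\ldots,x_{u+r}$, and Definition~\ref{def:barrier} already universally quantifies over all finite prefixes.
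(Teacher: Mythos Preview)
Your black-box extension idea is the heuristic the paper itself sketches before the lemma, but the paper does \emph{not} prove Lemma~\ref{separated} that way, and for good reason: the step you flag as ``the main obstacle'' really is a gap. Treating $B=B_1\times\cdots\times B_M$ and $q_{1\ldots M}$ as opaque, you need a state $s$ with $p_{s\,q_1}>0$ and a set $A$ with $P_s(A)>0$ and $A$ disjoint from $B_1,\ldots,B_r$. Nothing in the hypotheses of Lemma~\ref{neljas} rules out, say, every predecessor $s$ of $q_1$ having $P_s$ supported entirely on $B_1\cup\cdots\cup B_r$; your ``shrink the $B_i$ using separability'' gesture does not handle the case where the relevant $P_s$ are atomic and share atoms with the $B_i$, nor does it interact with the requirement that the shrunk sets retain positive $P_{q_i}$-mass. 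A second, smaller issue: prepending $r$ copies is overkill and makes the path positivity harder (you then need $p_{ss}>0$ or a multi-state prefix all of whose emission laws charge $A$); a single prepended coordinate already forces separation, since in $(x'_{1\ldots w},a,b_1,\ldots,b_{M-w})$ the symbol $a$ lands in position $w+1$, which $\tilde B$ requires to lie in $B_w$.

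The paper's proof avoids this existence problem entirely by \emph{opening the box}: it reuses the explicit structure of $B$ built in the proof of Lemma~\ref{neljas}, where every coordinate set is one of the pairwise disjoint sets $\mathcal Z,\mathcal X_1,\ldots,\mathcal X_K$. In almost all cases the barriers produced there are \emph{already} separated, because any shift by $1\le w\le r$ forces a coordinate known to lie in one $\mathcal X_i$ (or $\mathcal Z$) into a slot requiring a different $\mathcal X_j$. The only exception is the degenerate case $L=1$, $s=1$ (so $B\subset\mathcal X_1^{M}$), and there one prepends a single factor $\mathcal X_{q_0}$ for some $q_0\neq 1$ with $p_{q_0\,1}>0$; this state and set are explicitly available from the construction, so no abstract existence argument is needed. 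If you want your route to go through, you must either import enough of that internal structure to manufacture $A$ and $s$ concretely, or add an extra hypothesis guaranteeing their existence.
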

\subsubsection{Counterexamples}
The condition on $C$ in Lemma \ref{neljas} might seem technical
and even unnecessary. We next give an example of an HMM where
the cluster condition is not fulfilled and no barriers can occur.
Then, we will modify the example (Examples \ref{ex:2.4} \ref{ex:2.5}) 
to enforce the cluster condition and consequently gain barriers. 
\begin{example}\label{ex:2.3} Let $K=4$ and consider an ergodic Markov chain
with transition matrix
$$\mathbb{P}=
\left(%
\begin{array}{cccc}
  {1\over 2} & 0 & 0 &  {1\over 2} \\
  0 &  {1\over 2} &  {1\over 2} & 0 \\
   {1\over 2} & 0 &  {1\over 2} & 0 \\
  0 &  {1\over 2} & 0 &  {1\over 2}\\
\end{array}%
\right).
$$
Let the emission distributions be such that \eqref{lll} is
satisfied and $G_1=G_2$ and $G_3=G_4$ and $G_1\cap G_3=\emptyset$.
Hence, in this case there are two disjoint clusters $C_1=\{1,2\}$,
$C_2=\{3,4\}$. The matrices $\mathbb{Q}_i$  corresponding to $C_i$,
$i=1,2$ are
$$\mathbb{Q}_1=\mathbb{Q}_2=
\left(%
\begin{array}{cc}
   {1\over 2} & 0 \\
  0 &  {1\over 2} \\
\end{array}%
\right).$$ Evidently, the cluster assumption of Lemma \ref{neljas} is not
satisfied. Note also that the alignment cannot change (in one step) its state to 
the other one of the same cluster. Due to the disjoint supports, any observation  indicates
the corresponding cluster. Hence any sequence of observations can
be regarded as a sequence of  blocks emitted from
alternating clusters. However, the alignment inside each block stays
constant.\\\\
In order to see that no $x_u$ can be a node (of any order) for $1\le u<n$,
recall  $t(u,j)$~\eqref{vaike} and $t(u,j)^{(r)}$~\eqref{eq:trecurse}, 
and Proposition~\ref{nodepropr}. Specifically, note that in this setting
for any $j\in S$ $t(u,j)$ contains exactly one element, hence for any $r\ge 1$,
$t(u,j)^{(r)}$ defines a function from $S$ to $S$.  Now, it is easy to see that
depending on $x_u$, $t(u,j)$ belongs to a single cluster $C(x_u)$ for all $j\in S$. 
In particular, there are $i,j\in C'\subset S$ for some cluster $C'$ such that $i\neq j$.  Given 
this particular transition matrix, evidently $t(u,i)\neq t(u,j)$. Hence, $x_u$ cannot
be a (zero order) node (by~\eqref{eq:node-tr}). Now, starting with $u+1$ (instead of $u$),
the same argument establishes that for some $i,j\in S$, $t(u+1,i)\neq t(u+1,j)$ but are 
in one cluster.  Applying the same argument again but now to $t(u+1,i)$ and $t(u+1,j)$, we get that 
$t(u,t(u+1,i))\neq t(u,t(u+1,j))$, i.e. $t^{(2)}(u,i)\neq t^{(2)}(u,j)$. Consequently $x_u$ cannot
be a first order node~\eqref{eq:node-tr}; and so forth and so on recursively for  any $r$ such 
that $0\le r<n-u$.
\end{example}
\begin{example}\label{ex:2.4} Let us modify the HMM in Example \ref{ex:2.3} to ensure
the assumptions of Lemma \ref{neljas} hold. At first, let us
change the transition matrix. Let $0<\epsilon<{1\over 2}$ and
consider the Markov chain  $Y$ with transition matrix
$$
\left(%
\begin{array}{cccc}
  {1\over 2}-\epsilon & \epsilon & 0 &  {1\over 2} \\
  \epsilon &  {1\over 2}-\epsilon &  {1\over 2} & 0 \\
   {1\over 2} & 0 &  {1\over 2} & 0 \\
  0 &  {1\over 2} & 0 &  {1\over 2}\\
\end{array}%
\right).
$$
Let the emission distributions be as in the previous example. In
this case, the cluster $C_1$ satisfies the assumption of Lemma
\ref{neljas}. As previously, every observation indicates its
cluster. Unlike in the previous example, nodes are now possible.
To be concrete, let $\epsilon=1/4$, $f_1(x)=\exp(-x)_{x\ge 0}$, 
$f_2(x)=2\exp(-2x)_{x\ge 0}$, and $f_3(x)=\exp(x)_{x\le 0}$, 
$f_4(x)=2\exp(2x)_{x\le 0}$. It can then be verified that, for example,
if $x_1=1$, $x_2=1$ then $x_1$ is a $1$-node of order 2. Indeed, in
that case any element of 
$B=(0,+\infty)\times (\log(2), +\infty)\times (0,+\infty)$ is
a $1$-barrier of order 2.
\end{example}
\begin{example}\label{ex:2.5} Another way to modify the HMM in  Example \ref{ex:2.3}
to enforce the assumptions of Lemma \ref{neljas} is to change
the emission probabilities. Assume that the supports $G_i$,
$i=1,\ldots,4$ are such that $P_j(\cap_{i=1}^4 G_i)>0$ for all $j\in S$, and
\eqref{lll} holds. Now, the model has only one cluster that is $S=\{1,\ldots, 4\}$. 
Since the matrix $\mathbb{P}^2$ has all its entries positive, the conditions 
of Lemma \ref{neljas} are now satisfied.  A barrier can now be constructed. For example, the
following block of observations,
\begin{equation}\label{barrier}
z_1,z_2,z_3,x_1,\ldots,x_{k}, z'_1,z'_2,z'_3,
\end{equation}
 where $z_i,z'_i\in \cap_{j=1}^4 G_j$, $i=1,2,3$, $x_i\in {\cal X}$, $i=1,\ldots k$
 and $k$ is sufficiently large, is a barrier (see proof of Lemma \ref{neljas}).
  The construction of barriers in this case
 is possible because of the observations $z_i$ and $z'_i$.
 These observations  can be emitted from any state (i.e. from any distribution 
$P_i$, $i=1,\ldots,4$) and therefore  do not indicate any proper subsets of $S$.
 They play a role of a buffer allowing a change in
 the alignment from a given state to any other state (in 3 steps).
 The HMM in Example \ref{ex:2.3} does not have $r$-order nodes, because such
 buffers do not arise.  The cluster assumption in Lemma \ref{neljas}
 makes these buffers possible.
\end{example}

\def\Z{\tilde Z}
\section{Alignment process}\label{sec:alignmentprocess}
Let $x_{1\infty}=x_1,x_2,\ldots $ be a realization of $X$. If
for some $r<\infty$ $x_{1\infty}$ contains infinitely many
$r$-order nodes, then Corollary \ref{r-good} paves the way for 
defining an infinite alignment for  $x_{1\infty}$.
\subsection{Preliminaries}\label{sec:prelimproc}
Throughout this Section, we work under the assumptions of Lemma
\ref{neljas}. Let $M\ge 0$, $B\subset \mathcal{X}^M$, $r\ge 0$, 
and $l\in S$, $q=q_{1\ldots M}\in S^M$ as promised by Lemma~\ref{separated}.  
Then, for every $n\ge 1$, $$\P\Bigl((Y_n,\ldots,Y_{n+M-1})=q\Bigr)>0,\quad
\P\Bigl((X_n,\ldots,X_{n+M-1})\in B \Big|(Y_n,\ldots,Y_{n+M-1})=q\Bigr)>0$$ 
hence every $x_{n\ldots n+M-1}\in B$ is  a separated (relative to $B$) 
$l$-barrier of order $r$.\\
Denote $\P\Bigl((X_n,\ldots,X_{n+M-1})\in {\cal
Y}|(Y_n,\ldots,Y_{n+M-1})=q\Bigr)$ by $\gamma^*$. Thus, $\gamma^*>0$, and define  
\begin{equation}\label{vector}
U_n\stackrel{\mathrm{def}}{=}(X_n,\ldots,X_{n+M-1}),\quad
D_n\stackrel{\mathrm{def}}{=}(Y_n,\ldots,Y_{n+M-1}).\end{equation}
Let ${\cal F}_n\stackrel{\mathrm{def}}{=}\sigma(Y_1,\ldots,Y_n,X_1,\ldots,X_{n})$.
Define stopping times
$\nu_0,\nu_1,\nu_2,\ldots $, $R_0,R_1,R_2,\ldots,$ and
$\vartheta_0, \vartheta_1,\vartheta_2,\ldots,$ of the filtration
 $\{\mathcal{F}_{n+M-1}\}_{n=1}^{\infty}$ as follows:
\begin{align}\label{stop-nu}
&\nu_0\stackrel{\mathrm{def}}{=}\min\{n\geq 1: U_n\in B,D_n=q \},\quad
\nu_i\stackrel{\mathrm{def}}{=}\min\{n>\nu_{i-1}: U_n\in B,D_n=q\};\\
&\nonumber \\
\label{stop-u}
 &\vartheta_0\stackrel{\mathrm{def}}{=}\min\{n\geq 1: U_n\in B\},\quad
 \vartheta_i\stackrel{\mathrm{def}}{=}\min\{n>\vartheta_{i-1}:  U_n\in B\};\\
&\nonumber  \\
\label{stop-R} &R_0\stackrel{\mathrm{def}}{=}\min\{n\ge 1:\,\, D_n=q\},\quad
R_i\stackrel{\mathrm{def}}{=}\min\{n>R_{i-1}:\,\, D_n=q\}.
\end{align}
We use the convention $\min\emptyset =0$ and $\max\emptyset=-1$.
Note the difference between  $\nu$ and $R$ and $\vartheta$: The
stopping times $\vartheta$ are observable via
$X$ process alone; the stopping times $R$ are observable via the $Y$ process alone; 
the sopping times $\nu$ already  require knowledge of
the full two-dimensional process $(X,Y)$.
Clearly $\vartheta_i\leq \nu_i$, and $R_i\leq \nu_i$.\\\\
From \eqref{stop-R}, it follows that the random variables 
$R_0, (R_1-R_0), (R_2-R_1), \ldots $ are independent and $(R_1-R_0),
(R_2-R_1), \ldots $ are identically distributed.
 The same evidently holds for the random variables
 $\nu_0, (\nu_1-\nu_0), (\nu_2-\nu_1),\ldots $.
\begin{proposition}\label{kesk} For any  initial distribution $\pi'$ 
of $Y,$ we have $E_{\pi'}\nu_0<\infty$ and $E_{\pi'}(\nu_1-\nu_0)<\infty$.

\end{proposition}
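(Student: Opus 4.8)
The plan is to show that $\nu_i - \nu_{i-1}$ (and $\nu_0$) has finite expectation by bounding it above by a geometrically distributed variable. The key observation is that the events $\{U_n \in B, D_n = q\}$ occur ``often enough'' along the chain: whenever $Y$ visits the state-block $q$ starting at some time $n$ (an event of positive probability under the stationary chain, or from any fixed starting distribution after enough steps, by irreducibility and aperiodicity), the conditional probability that $U_n \in B$ is exactly $\gamma^* > 0$, independently of $\mathcal{F}_{n-1}$ and of the future of $Y$ beyond $n+M-1$, by the conditional-independence structure of the HMM (the third bullet in \S\ref{sec:va_nodes} and the fact that emissions depend only on the current $Y$-value). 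So the strategy reduces the problem to: (i) the renewal times $R_i$ of the pattern $q$ in the Markov chain $Y$ have finite mean gaps, and (ii) at each such renewal, an independent coin of success probability $\gamma^*$ decides whether it is also a $\nu$-renewal, so the number of $R$-renewals one must wait through is geometric with mean $1/\gamma^*$.

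First I would establish (i): since $Y$ is a finite, irreducible, aperiodic Markov chain, the $M$-block pattern $q$ (which has positive probability, $\P(D_1 = q) > 0$ by Lemma~\ref{separated}) recurs with finite expected return time. More precisely, consider the ``$M$-window'' chain $\tilde Y_n = (Y_n,\ldots,Y_{n+M-1})$, which is itself a finite irreducible aperiodic Markov chain on the set of admissible $M$-tuples; the state $q$ is recurrent with $E[R_1 - R_0] < \infty$ and, for any initial distribution $\pi'$, $E_{\pi'} R_0 < \infty$ (finite-state chains have geometric tails on hitting times). Then for (ii), I would write $\nu_0$ as $R_{N}$ where $N = \min\{k \ge 0 : U_{R_k} \in B\}$. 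Conditioning on the $Y$-trajectory, the events $\{U_{R_k} \in B\}$ for distinct $k$ involve emissions at disjoint time-blocks (here the separation built into Lemma~\ref{separated}, or simply that $R_{k+1} \ge R_k + 1$ together with $M$-block disjointness after passing to a subsequence, is convenient but one can also just note the blocks $[R_k, R_k+M-1]$ may overlap — in that case use the sub-sequence of $R_k$'s that are at least $M$ apart), hence conditionally independent with success probability $\gamma^*$ each; so $N$ is stochastically dominated by a geometric variable with mean $1/\gamma^*$, uniformly in the $Y$-trajectory. A Wald-type bound then gives $E\nu_0 \le E[R_0] + E[N]\cdot \sup_k E[R_{k+1}-R_k \mid \cdot] < \infty$, and the same argument applied from the first $\nu$-renewal onward (using the strong Markov property at $\nu_0$, at which point $Y$ is at a fixed block $q$, so the continuation is governed by a fixed initial distribution) gives $E(\nu_1 - \nu_0) < \infty$.

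The main obstacle I anticipate is the bookkeeping around overlapping $M$-blocks: the emission events $\{U_{R_k}\in B\}$ at consecutive pattern-occurrences of $q$ in $Y$ are not automatically independent if the windows $[R_k, R_k+M-1]$ overlap. The cleanest fix is to thin the renewal sequence $\{R_k\}$ to those occurrences that are mutually at least $M$ apart — this thinned sequence still has finite expected gaps (a subsequence of a positive-recurrent renewal sequence, with the gap being a sum of at most $M$ original gaps, hence finite mean) — and to run the geometric argument on the thinned sequence, where the blocks are disjoint and the emission coins genuinely independent given $Y$. With that in place the rest is the routine Wald/geometric-tail estimate sketched above; I would also note that finiteness for a single initial distribution propagates to all $\pi'$ because finite-state hitting-time tails are uniformly geometric, or simply because any $\pi'$ is absolutely continuous with respect to a reference measure on the finite state space with bounded density.
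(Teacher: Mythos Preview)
Your proposal is correct and follows essentially the same strategy as the paper: reduce to positive recurrence of the pattern $q$ in a finite-state block chain, then use a geometric/Wald argument with success probability $\gamma^*$ for the emission event. The only difference is organizational: the paper sidesteps your ``main obstacle'' (overlapping $M$-windows) from the outset by working directly with the \emph{non-overlapping} block processes $D^b_m=(Y_{(m-1)M+1},\ldots,Y_{mM})$ and $U^b_m$, for which the emission coins are automatically conditionally independent, whereas you start with the sliding-window chain and then thin to disjoint blocks---which amounts to the same thing.
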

Proposition \ref{kesk} above is an intuitive result; a proof is provided in Appendix \S\ref{subsec:keskproof}.
To every $\nu_i,\,\,\, i=0,1,\ldots$ there corresponds an 
$l$-barrier of order $r$. This barrier extends over the interval
$[\nu_i,\nu_i+M-1]$. By Definition \ref{def:barrier},
$X_{\tau_i}$ is an $l$-node of order $r$, where
\begin{equation}\label{renewal}
\tau_i\stackrel{\mathrm{def}}{=}\nu_i+(M-1)-r,\quad i=0,1,\ldots
\end{equation}
Define 
\begin{eqnarray}
  \label{eq:Ti}
T_0\stackrel{\mathrm{def}}{=}\tau_0, & T_i\stackrel{\mathrm{def}}{=}\tau_i-\tau_{i-1}=\nu_i-\nu_{i-1},~
i=1,2,\ldots.  
\end{eqnarray}
Proposition \ref{kesk} says that $E_{\pi'}T_1<\infty$, $E_{\pi'}T_0<\infty$, where $\pi'$ is any
initial distribution of $Y$.
Thus, $T_i$, $i=0,1,\ldots$ correspond to a delayed renewal process (for a general reference see, for example, 
\cite{grimmet-stirzaker}).\\\\
Let $u_0,u_1,u_2,\ldots$ be the locations of the order $r$ $l$-nodes corresponding
to the stopping times $\vartheta$, i.e.
\begin{equation}\label{uh-uu}
u_i=\vartheta_{i}+(M-1)-r,\quad i=0,1,2,\ldots.
\end{equation}
Clearly, every $\tau_i$ is also $u_j$ for some $j\ge i$. Also, since the
barriers are separated,  $u_i>u_{i-1}+r$.
\subsection{Alignments}\label{sec:infal}
We next specify the alignments
$v(x_{1\ldots n})\in \V(x_{1\ldots n})$ and
define $v(x_{1\ldots \infty})$ as well as the  measures $\hat P_l^n$ corresponding to $v(x_{1\ldots n})$.
 
Let $k(x_{1\ldots n})$ be the number of $x_{u_0},~x_{u_1},\ldots,$ $x_{u_{k(x_{1\ldots n})-1}}$, {\em all} $l$ nodes of order $r$
such that $u_i>u_{i-1}+r$ for $i=1,\ldots,k(x_{1\ldots n})-1$, and $u_{k(x_{1\ldots n})-1}+r<n$. Recall (Definition~\ref{l6plik})
that based on the selection $\vee$ \eqref{eq:selectmax}, we single out the following proper piecewise alignment:

  \begin{eqnarray*}
v(x_{1\ldots n})&=&(\vee\W_{u_0}^l(x_{1\ldots u_0}),
 \vee\W_{(l)u_1}^{\,\,\,\,\,\,\,l}(x_{u_0+1\ldots u_1}),\ldots,\\
&& \vee\W_{(l)u_{k-1}}^{\,\,\,\,\,\,\,l}(x_{u_{k-2}+1\ldots u_{k-1}}),\vee \V_{(l)}(x_{u_{k-1}+1\ldots n}))
\in \V_{u_0\ldots u_{k-1}}^{l\ldots l}(x_{1\ldots n}),  
  \end{eqnarray*}
for $k=k(x_{1\ldots n})>0$, and 
$v(x_{1\ldots n})=\vee \V(x_{1\ldots n})$ for $k=0$.
Corollary \ref{r-good} makes it possible to define the
{\it infinite proper piecewise alignment} that will be consistent with
Definition \ref{l6plik} (in the sense of \eqref{eq:limit} below). Namely,
we state
\begin{definition}\label{l6pmatu}
\begin{eqnarray*}\label{tykid}
v(x_{1\ldots \infty})&\stackrel{\mathrm{def}}{=}&(\vee\W_{u_0}^l(x_{1\ldots u_0}),
 \vee\W_{(l)u_1}^{\,\,\,\,\,\,\,l}(x_{u_0+1\ldots u_1}),\ldots,)
\end{eqnarray*}
for all $x_{1\infty}$ that contain infinitely many $x_{u_0}$, $x_{u_1}$, \ldots,
$l$-nodes of order $r$, which is the case a.s. (Lemmas \ref{neljas} and \ref{separated}).
(For all other realizations, let us adopt 
$v(x_{1\ldots \infty})\stackrel{\mathrm{def}}{=}(\vee\W_{u_0}^l(x_{1\ldots u_0}),
 \vee\W_{(l)u_1}^{\,\,\,\,\,\,\,l}(x_{u_0+1\ldots u_1}),\ldots,
\vee\W_{(l)u_{k-1}}^{\,\,\,\,\,\,\,l}(x_{u_{k-2}+1\ldots u_{k-1}}),1,1,\ldots)$, where $k$ is
the total number of $l$ nodes of order $r$ in the given realization.)


\end{definition}
 Note that for every $x_{u_i}$ observed in $(x_1,\ldots,x_n)$
\begin{equation}\label{eq:limit} 
v(x_1^{\infty})_{1\cdots u_i}=v(x_1,\ldots,x_n)_{1\cdots u_i}.
\end{equation}
 Let us now formally define the empirical measures $\hat P_l^n$ which are central
to this theory:
 \begin{definition}\label{emp-measures}
Let $=V'_{1\ldots n}=v(X_1,\ldots,X_n)$  (where $v$ is as in Definition
\ref{l6plik}). For each state $l\in S$ that appears in $V'_1$, $V'_2$,
\ldots $V'_n$ define the {\it empirical $l$-measure} 
\begin{equation*}
\hat P_l^n(A, X_{1\ldots n})\stackrel{\mathrm{def}}{=}{\sum_{i=1}^n I_{A\times l}(X_i,V'_i)\over \sum_{i=1}^n
I_l(V'_i)},\quad A\in {\cal B}.
\end{equation*}
For other $l\in S$ (i.e. such that $l\ne V'_i$ for $i=1,\ldots, n$),
define $\hat P_l^n$ to be an arbitrarily chosen (probability) measure $P^*$.
\end{definition}
The infinite alignment allows us to define the {\it alignment
process}:
\begin{definition}
The encoded process $V\stackrel{\mathrm{def}}{=}v(X)$ will be
called the {\em alignment process}.
\end{definition}
(Of course, the definition of $V$ above is sensible only because $X$ has infinitely many $u_i$-s a.s..)
We shall also consider the 2-dimensional process 
$$Z\stackrel{\mathrm{def}}{=}(X,V).$$
Using $Z$, we define a related quantity $Q_l^n$ as follows: Let
$V_1,\ldots, V_n$ be the first $n$ elements of the alignment
process. In general
$$v(x_1^{\infty})_{1\ldots n}\ne
v(x_1,\ldots,x_n),$$ hence $V'_i$ need not equal $V_i$. For every $l\in S$, we
define
\begin{equation*}\label{emp2}
\hat Q_l^n(A, Z_{1\ldots n})\stackrel{\mathrm{def}}{=}{\sum_{i=1}^n I_{A\times l}(X_i,V_i)\over \sum_{i=1}^n
I_l(V_i)}= {\sum_{i=1}^n I_{A\times l}(Z_i)\over \sum_{i=1}^n
I_l(V)}, \quad A\in {\cal B} .
\end{equation*}
(As in Definition \ref{emp-measures},  if $l\ne V_i$, $i=1,\ldots, n$, then $\hat Q_l^n\stackrel{\mathrm{def}}{=}P^*$.)
\subsection{Regenerativity}
To prove our main theorem, we use the fact that $Z$ is a
regenerative process (for a general reference, see, for example, \cite{bremaud}):
\begin{proposition}\label{reg}
The processes $V$, $X$, and  $Z$ are regenerative with respect to the sequence
of stopping times $\tau_i$.
\end{proposition}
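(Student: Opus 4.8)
The plan is to establish regenerativity directly from the structure of the node locations $\tau_i$ and the piecewise definition of $v$. Recall that $\tau_i = \nu_i + (M-1) - r$, where the $\nu_i$ are the successive times at which the block $(U_n = (X_n,\ldots,X_{n+M-1})\in B,\ D_n = q)$ occurs. The first thing I would verify is the key independence/identical-distribution property of the post-$\tau_i$ data: conditionally on $\mathcal F_{\nu_i + M - 1}$ (equivalently, given everything up to and including the $i$-th defining block), the future $Y_{\nu_i + M}, Y_{\nu_i + M + 1},\ldots$ evolves as a Markov chain started from the fixed state $q_M$, and hence $(Y_m, X_m)_{m > \nu_i + M - 1}$ has a distribution that depends only on $q_M = l$ and not on $i$ or on the past. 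This is the standard "hitting a fixed state resets the chain" observation; since $D_{\nu_i} = q$ pins down $Y_{\nu_i + M - 1} = q_M$, and the emissions are conditionally independent given $Y$, the two-dimensional process $(X,Y)$ restarted from time $\nu_i + M$ is independent of $\mathcal F_{\nu_i + M - 1}$ and identically distributed across $i$. This already gives that $(X,Y)$ is regenerative with respect to $\nu_i + M$, and the $T_i = \nu_i - \nu_{i-1}$ are i.i.d.\ for $i\ge 1$ (with $T_0$ possibly different) — the delayed-renewal structure asserted just before the proposition, with finite mean by Proposition~\ref{kesk}.

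The substantive point specific to this proposition is that the \emph{alignment} process $V = v(X)$, and therefore $Z = (X,V)$, inherits regenerativity with respect to the $\tau_i$ (not $\nu_i$). Here is where the node/barrier machinery enters. Because each block at $\nu_i$ is a separated $l$-barrier of order $r$, the observation $X_{\tau_i}$ is an $l$-node of order $r$, and separation guarantees $u$-indices satisfy $u_j > u_{j-1} + r$, so Corollary~\ref{r-good} applies at every node. The crucial structural fact is that $v(x_{1\ldots\infty})$ is defined piecewise precisely along the node locations $u_j$: the segment of $V$ on the block $(\tau_{i-1}, \tau_i]$ — more precisely on $(u_{j-1}, u_j]$ for the relevant indices — is a function \emph{only} of the observations in a bounded window around that block, via the selection $\vee\W_{(l)\,\cdot}^{\ \ \ l}(\cdot)$ applied to a segment of $X$ that begins and ends at $l$-nodes of order $r$. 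Concretely, by Corollary~\ref{r-good} and Definition~\ref{l6pmatu}, $V_{u_{j-1}+1\ldots u_j} = \vee\W_{(l)u_j}^{\ \ \ l}(X_{u_{j-1}+1\ldots u_j})$, so the post-$\tau_i$ portion of $V$ depends on $X$ only through $X_{\tau_i + 1}, X_{\tau_i + 2},\ldots$ together with the single fact that $X_{\tau_i}$ was an $l$-node of order $r$ (which it always is, by construction). Hence $V$ restarted at $\tau_i$ is a fixed deterministic functional of $X$ restarted at $\tau_i$.

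Putting the two pieces together: I would show that the shifted process $\theta_{\tau_i} Z := (X_{\tau_i + \cdot}, V_{\tau_i + \cdot})$ is independent of $\mathcal F_{\tau_i}$-type past information and has a law not depending on $i$. The $X$-part is handled by the Markov restart at $q_M = l$ described above (and $\tau_i = \nu_i + M - 1 - r$ differs from $\nu_i + M$ by a fixed constant, so this is only a cosmetic shift); the $V$-part then follows because, as just argued, $V_{\tau_i + \cdot}$ is the \emph{same} measurable map applied to $X_{\tau_i + \cdot}$ for every $i$ — the node at $\tau_i$ severs dependence on the earlier observations by Corollary~\ref{r-good}. Regenerativity of $X$ alone, and of $V$ alone, then follows as the coordinate projections of the regenerativity of $Z$.

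The main obstacle I anticipate is bookkeeping the two layers of indices — the $\tau_i$ (nodes from the special block $q$) versus the $u_j$ (all order-$r$ $l$-nodes), together with the offset $M-1-r$ — and checking carefully that the piecewise alignment on a regeneration cycle $(\tau_{i-1},\tau_i]$ really does decompose compatibly with the concatenation in Corollary~\ref{r-good}, i.e.\ that gluing the segment maps $\vee\W$ across the intermediate $u_j$ that are not $\tau$-times produces exactly $V$ on that cycle and uses no observation from before $\tau_{i-1}$. Once one is convinced (via \eqref{eq:limit} and Corollary~\ref{r-good}) that the alignment is "localized" between consecutive $l$-nodes of order $r$, and that every $\tau_i$ is such a node, the regenerativity is essentially forced; the argument is more a matter of assembling Corollary~\ref{r-good}, the separation property, and the Markov restart than of any new estimate.
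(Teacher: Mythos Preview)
Your proposal is correct and follows essentially the same line as the paper's (very terse) proof: both argue that the hidden chain is in a fixed state at $\tau_i$ so that $X$ regenerates by the Markov property, and that every $\tau_i$ is one of the node locations $u_j$ used in Definition~\ref{l6pmatu}, so that the alignment on each cycle $(\tau_{i-1},\tau_i]$ is a fixed measurable function of the $X$-segment on that same cycle. One small slip worth correcting: you write ``$q_M=l$'', but in fact $q_{M-r}=l$, which gives $Y_{\tau_i}=l$ directly and makes your detour through $\nu_i+M-1$ and the ``cosmetic shift'' unnecessary --- the paper simply invokes the Markov property at $\tau_i$ itself.
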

A proof is given in Appendix \S\ref{subsec:reg}.
\\\\
Recall  (\S\ref{sec:prelimproc}) $B$, 
the set of separated $l$-barriers of order $r$, and the corresponding
state sequence $q$.
Let
$$P^r_{q_i}\propto P_{q_i}I_{B_i},\quad i=1,\ldots,M.$$
Thus, $P^r_{q_i}$ is the measure $P^r_{q_i}$ conditioned on
$B_i$, $i$-th component of $B$.  Recall also that $q_{M-r}=l$.\\\\
Define new processes
\begin{align}\label{def:new-M}
&Y^r\stackrel{\mathrm{def}}{=}(Y^r_i)_{i=1}^{\infty},\quad\text{where}\quad
Y^r_1=q_{M-r+1},\ldots, Y^r_{r}=q_{M},\quad\text{and}\quad Y^r_{r+1},Y^r_{r+2},\ldots \\
&\nonumber \text{ is an }~S~\text{-valued Markov chain with transition probability matrix } \mathbb{P}
\text{ and initial}\\ 
& \nonumber\text{distribution }(p_{q_N\,j})_{j\in S};\\
&\nonumber  X^r\stackrel{\mathrm{def}}{=}(X^r_i)_{i=1}^{\infty}\quad \text{is a modified HMM
with}~Y^r~\text{as its underlying Markov chain and}\\
&\nonumber P_{Y^r_i}~\text{if}~i>r,~\text{and}~P^r_{q_{N-r+i}}~\text{if}~1\le i\le r,~\text{as its emission  distributions;}\\
&\label{def:new-V}
V^r\stackrel{\mathrm{def}}{=}(V^r_i)_{i=1}^{\infty}\stackrel{\mathrm{def}}{=}v(X^r),~\text{where  } 
v~\text{is as in Definition \ref{l6pmatu};}\\
&  Z^r\stackrel{\mathrm{def}}{=}(X^r,V^r)\label{def:new-Z}.
\end{align}
Note that the process $X^r$ is not exactly an HMM as defined in
Definition \ref{def:HMM} because the first $r$-emissions are generated from
distributions that differ from the distributions of the subsequent emissions. However, 
conditioned on the underlying Markov Chain $Y^r$, all emissions are still independent. Also
note that in the definition of $V^r$, the alignment is still
based on the original HMM $X$, i.e. the definition of $v(x_1,\ldots,x_n)$
relies on the  distributions $P_{q_1}$, $P_{q_2}$,\ldots, $P_{q_n}$ (given
$Y_{1\ldots n}=q_{1\ldots n}$).

Finally, note that for $r=0$, the process $Y^0$ is essentially our original
Markov chain except for the initial distribution that is now $(p_{l\,j})_{j\in S}$ instead of $\pi$.
Similarly, $X^0$ is the HMM in the sense of Definition
\ref{def:HMM} with $Y^0$ as its underlying Markov chain.
Therefore, $Z^0$ is the  process $Z$ with $(p_{l\,j})_{j\in S}$ as the initial distribution of 
its $Y$-component.\\\\
Finally we define  analogues of $\nu_0$ and $\tau_0$:
\begin{align}\nonumber
&\nu_0^{r}\stackrel{\mathrm{def}}{=}\min\Big \{n\geq 1: (Y^r_{n},\ldots,
Y^r_{n+M-1})=q,\quad (X^r_{n},\ldots, X^r_{n+M-1})\in B\bigl)\Big\}\\
&\label{r-tau}\tau_0^r\stackrel{\mathrm{def}}{=}\nu_0^r +(M-1)-r.
\end{align}
Note that the  random variable $\tau_0^r$ has the same law as $T_i$~\eqref{eq:Ti}, 
$i\geq 1$.
%
Since the barriers from $B$ are separated  
(Definition \ref{def:sepbar}, Lemma~\ref{separated}), then
$\nu_0^r>r$. This means that the laws of $\nu_0^r$,  $\tau_0^r$,
 $\nu_0+r$, and $\tau_0+r$ would all be equal if the initial distribution of $Y$ were $(p_{q_M\,l})_{l\in S}$. 
Recalling that any initial distribution 
$\pi'$ of  $Y$  yields $E_{\pi'}(\nu_0)<\infty$ (Proposition \ref{kesk}), we obtain
\begin{equation}\label{lopp0}
ET_1=E\tau_0^r=E_{q_M}(\nu_0+(M-1)-r+r)<\infty.
\end{equation}
The above observations will allow us to prove (see Appendix \S\ref{subsec:proofofsaddam}) 
the following theorem which is {\em the main result of the paper}:
\begin{theorem}\label{saddam}
If $X$ satisfies the assumptions of Lemma \ref{neljas}, then there
exist probability measures $Q_l$, $l\in S$, such that
\begin{equation*} \hat P_l^n\Rightarrow Q_l,\quad \text{a.s.},\quad
\hat Q_l^n\Rightarrow Q_l,\quad \text{a.s.}
\end{equation*}
 and for each $A\in {\cal B}$,
\begin{equation}\label{jaotus}
Q_l(A)={\sum_{i=1}^{\infty}{\bf P}(Z^r_i \in A\times l,
\tau^r_0\geq i)\over \sum_{i=1}^{\infty} {\bf P}(V^r_i=l,\tau^r_0\geq i)}.
\end{equation}
where  $V^r$, $Z^r$, and $\tau^r_0$ are defined in \eqref{def:new-V}, \eqref{def:new-Z}, and
\eqref{r-tau}, respectively.
\end{theorem}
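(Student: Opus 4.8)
The plan is to exploit the regenerativity of $Z$ (Proposition~\ref{reg}) together with the renewal structure of the $\tau_i$'s to invoke a renewal-reward / ratio-limit argument, and then to reconcile $\hat Q_l^n$ (built from the alignment process $V$) with $\hat P_l^n$ (built from the finite-horizon alignment $V'$). First I would fix $l\in S$ and a Borel set $A$, and write both the numerator $\sum_{i=1}^n I_{A\times l}(Z_i)$ and the denominator $\sum_{i=1}^n I_l(V_i)$ as sums over the regeneration cycles determined by the $\tau_i$'s. Because $Z$ is regenerative with respect to $\{\tau_i\}$, the within-cycle contributions $\bigl(\sum_{i=\tau_{j-1}+1}^{\tau_j} I_{A\times l}(Z_i),\ \sum_{i=\tau_{j-1}+1}^{\tau_j} I_l(V_i)\bigr)$, $j\geq 1$, form an i.i.d.\ sequence, and the first (delayed) cycle is finite a.s. by Proposition~\ref{kesk}. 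Each such contribution is bounded by the cycle length $T_j$, which has finite mean by \eqref{lopp0}; hence the strong law of large numbers applies to both the numerator and the denominator when indexed by completed cycles, and a standard sandwiching between $\tau_{m(n)}$ and $\tau_{m(n)+1}$ (where $m(n)$ is the number of completed cycles by time $n$) upgrades this to a limit along $n$. Dividing, the cycle-length normalizations cancel and we get
\begin{equation*}
\hat Q_l^n(A,Z_{1\ldots n})\longrightarrow
\frac{E\Bigl[\sum_{i=1}^{\tau_0^r} I_{A\times l}(Z^r_i)\Bigr]}{E\Bigl[\sum_{i=1}^{\tau_0^r} I_l(V^r_i)\Bigr]}
=\frac{\sum_{i=1}^{\infty}{\bf P}(Z^r_i\in A\times l,\ \tau^r_0\geq i)}{\sum_{i=1}^{\infty}{\bf P}(V^r_i=l,\ \tau^r_0\geq i)},
\quad\text{a.s.},
\end{equation*}
where the passage to $Z^r$ and $\tau_0^r$ is exactly the identification of the law of a generic (non-delayed) cycle: the process restarted at a $\tau$-time begins its $Y$-component with distribution $(p_{q_M\,j})_{j\in S}$, the first $r$ emissions are the tail of the barrier block and hence distributed as the conditioned $P^r_{q_i}$, and $\tau_0^r$ has the law of $T_i=\nu_i-\nu_{i-1}$ shifted as in \eqref{renewal}. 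The Fubini rewriting $E[\sum_{i=1}^{\tau_0^r}\cdots]=\sum_i {\bf P}(\cdots,\tau_0^r\geq i)$ gives \eqref{jaotus}; call the resulting ratio $Q_l(A)$. That $Q_l$ is a genuine probability measure follows from countable additivity of each $\sum_i{\bf P}(Z^r_i\in\cdot\times l,\tau_0^r\geq i)$ and the normalization, with positivity of the denominator guaranteed because $q_{M-r}=l$ forces $V^r_{(M-1)-r}=l$ with positive probability on $\{\tau_0^r\geq (M-1)-r\}$.

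The remaining—and genuinely delicate—step is to show that $\hat P_l^n$ has the \emph{same} a.s.\ limit as $\hat Q_l^n$, i.e.\ that replacing the infinite-alignment coordinates $V_i$ by the finite-horizon-alignment coordinates $V'_i=v(X_{1\ldots n})_i$ does not affect the ratio limit. The key is \eqref{eq:limit}: for every node location $u_i\leq n$ we have $V'_{1\ldots u_i}=V_{1\ldots u_i}$, so $V$ and $V'$ can disagree only on the terminal block $(u_{k(n)}+1,\ldots,n)$ lying beyond the last usable order-$r$ node before $n$. I would bound the number of indices $i\leq n$ at which $V_i\neq V'_i$ by $n-u_{k(n)}$, and then control this "tail gap" via the renewal structure: the locations $u_i$ correspond (through the $\vartheta$-stopping times) to a stationary point process of positive intensity, so $n-u_{k(n)}$ is $o(n)$ a.s. (indeed it is $O(\log n)$ a.s.\ by a Borel–Cantelli argument on the inter-node gaps, whose tails decay geometrically). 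Since both the numerators $\sum I_{A\times l}$ and the denominators $\sum I_l$ for $V$ and $V'$ differ by at most $n-u_{k(n)}=o(n)$, while each denominator grows linearly in $n$ (its $V$-version does by the SLLN argument above, with positive limiting slope), the perturbation is asymptotically negligible in the ratio, giving $\hat P_l^n\Rightarrow Q_l$ a.s.\ as well. (For the weak-convergence conclusion one runs the pointwise-in-$A$ argument on a convergence-determining countable class, or on a countable generating algebra, using separability of $\mathcal X$, and then intersects the countably many a.s.\ events.)

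I expect the main obstacle to be precisely this reconciliation of $\hat P_l^n$ with $\hat Q_l^n$: one must argue carefully that the terminal segment beyond the last order-$r$ node is short enough, almost surely and uniformly enough in $n$, which requires knowing that order-$r$ nodes (not merely separated barriers) recur with controlled gaps — this is where Proposition~\ref{kesk}, \eqref{lopp0}, and the separation property $u_i>u_{i-1}+r$ all get used together. A secondary technical point is the interchange of the a.s.\ limit and the "for all $A$" quantifier needed for weak convergence; I would handle it by reducing to a countable family of sets and, if $\mathcal X=\mathbb R^d$, to continuity points of the limiting one-dimensional marginals, invoking that $X$ is mixing hence ergodic so that the relevant cycle-averages converge simultaneously off a single null set.
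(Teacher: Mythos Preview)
Your proposal is correct and follows essentially the same route as the paper: regenerativity of $Z$ with respect to the $\tau_i$, decomposition into i.i.d.\ cycle contributions bounded by $T_j$ (with $ET_1<\infty$ by \eqref{lopp0}), the SLLN plus sandwiching between $\tau_{k(n)}$ and $\tau_{k(n)+1}$, the Fubini rewriting of $E[\sum_{i=1}^{\tau_0^r}\cdots]$, and then separability of $\mathcal X$ to pass from pointwise-in-$A$ convergence to weak convergence.

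The one place where you differ slightly is in the reconciliation of $\hat P_l^n$ with $\hat Q_l^n$. You bound the discrepancy by $n-u_{k(n)}$, the gap after the \emph{last node}, and then argue via ergodicity of the $\vartheta$-times that this is $o(n)$. The paper instead uses the coarser bound $n-\tau_{k(n)}$, the gap after the last \emph{renewal} time, which is immediately controlled by $T_{k(n)+1}/n\to 0$ from the finite-mean renewal theorem---no separate argument about the $u_i$'s is needed, since every $\tau_j$ is some $u_i$ and hence $V'_i=V_i$ for $i\le\tau_{k(n)}$ already follows from \eqref{eq:limit}. Your route works too (the $u_i$ gaps are controlled by ergodicity of $X$), but the paper's is slightly cleaner because the $\tau$-times are genuine renewal times while the $\vartheta$-times are not. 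Your $O(\log n)$ claim is stronger than needed and not established in the paper; $o(n)$ suffices. Also, your positivity-of-denominator remark conflates the generic $l\in S$ of the theorem with the specific barrier state $l$ of Lemma~\ref{neljas}; the paper does not address this explicitly either.
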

\begin{corollary}
Suppose $X$ satisfies the assumptions of Lemma \ref{neljas} with
$r=0$.  Then, for each $l\in S$ (\ref{jaotus}) takes form
\begin{equation}\label{0-jaotus}
Q_{l}(A)={\sum_{j=1}^{\infty}{\bf P}_l(Z_j \in A\times i,
\tau_0\geq j)\over \sum_{j=1}^{\infty} {\bf P}_l(V_j=i,\tau_0\geq
j)},
\end{equation}
where $\mathbf{P}_l$ corresponds to the $Y$ process initialized with 
$(p_{l\,j})_{j\in S}$  instead of $\pi$. \end{corollary}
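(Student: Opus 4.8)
The plan is to read the corollary off Theorem~\ref{saddam} by specializing every auxiliary object in the formula \eqref{jaotus} to the case $r=0$; no new estimate is required. First I would apply Lemma~\ref{separated} with $r=0$, obtaining an integer $M$, a rectangle $B=B_1\times\cdots\times B_M\subset\mathcal{X}^M$, a state string $q=q_{1\ldots M}$ and a state $l$ with $q_{M-r}=q_M=l$; for $r=0$ the separation clause of Definition~\ref{def:sepbar} is vacuous, so one may work with $B$ directly.

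Next I would unwind the constructions \eqref{def:new-M}--\eqref{def:new-Z} together with \eqref{r-tau} when $r=0$. Since there are then no ``initial'' emissions drawn from the conditioned measures $P^r_{q_i}$, the process $X^0$ is a genuine HMM in the sense of Definition~\ref{def:HMM}, whose underlying chain $Y^0$ is the original Markov chain carrying the initial distribution $(p_{q_M\,j})_{j\in S}=(p_{l\,j})_{j\in S}$ --- precisely the remark recorded just after \eqref{def:new-Z}. Because the encoding map used to build $V^0$ is the same map of Definition~\ref{l6pmatu} that builds $V$, the pair $Z^0=(X^0,V^0)$ has the same law as $Z=(X,V)$ computed under $\mathbf{P}_l$ (the law in which the $Y$-component starts from $(p_{l\,j})_{j\in S}$). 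Likewise, from \eqref{r-tau} with $r=0$, $\tau_0^0=\nu_0^0+(M-1)$, and under this identification of laws $\nu_0^0$ is just $\nu_0$, hence $\tau_0^0$ is just $\tau_0$, evaluated under $\mathbf{P}_l$; cf.\ the discussion around \eqref{lopp0}, where taking $r=0$ already gives $\tau_0$ a finite mean under the chain started from $q_M$.

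Finally I would substitute into \eqref{jaotus}: replacing $Z^r$, $V^r$, $\tau_0^r$ by $Z$, $V$, $\tau_0$ --- all under $\mathbf{P}_l$ --- and recalling $q_M=l$ yields \eqref{0-jaotus}. There is no genuine obstacle; the only point requiring care is the bookkeeping that $V^0$ and $V$ are built with the same selection scheme $\vee$ \eqref{eq:selectmax} and from the same (order-$0$) node structure, so that $Z^0$ and $Z$ under $\mathbf{P}_l$ are genuinely equidistributed. This is immediate from the constructions of \S\ref{sec:nodes_barriers} and \S\ref{sec:alignmentprocess}, so the corollary follows with essentially no extra work beyond the statement of Theorem~\ref{saddam}.
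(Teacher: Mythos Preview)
Your proposal is correct and follows exactly the approach the paper intends: the paper gives no separate proof of this corollary, but the paragraph immediately preceding Theorem~\ref{saddam} already records that for $r=0$ the processes $Y^0$, $X^0$, $Z^0$ reduce to $Y$, $X$, $Z$ started with initial distribution $(p_{l\,j})_{j\in S}$, and the corollary is simply \eqref{jaotus} rewritten under this identification. Your careful bookkeeping (vacuity of the separation clause when $r=0$, identification of $\tau_0^0$ with $\tau_0$ under $\mathbf{P}_l$) makes explicit precisely what the paper leaves implicit.
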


\section{Appendix}
\subsection{Proof of Lemma \ref{neljas}}\label{sec:proofneljas}
\begin{proof}The proof below is a rather direct construction which is, however, technically
involved. In order to facilitate the exposition of this proof, we have divided it into 18 
short parts as outlined below:
\begin{enumerate}[I.]
\item - \S\ref{subsec:I} - Maximal probability transitions $p^*_i$ and maximal likelihood ratio $A$.
\item Construction of
\begin{enumerate}[]
\item (\S\ref{subsec:Xl}) auxiliary subsets $\mathcal{X}_l\in \mathcal{X}$, \eqref{tarn};
\item (\S\ref{subsec:Z}) a special set $Z\subset\mathcal{X}$, \eqref{eq:defineZ1}, \eqref{eq:defineZ2};
\item (\S\ref{subsec:abs}) auxiliary sequences $\mathbf{s}$ \eqref{tsykkel}, $\mathbf{a}$ \eqref{side}, and $\mathbf{b}$ 
       \eqref{eq:defineb} of  states in $S$;
\item (\S\ref{subsec:k})  $k$, the number of $\mathbf{s}$ cycles inside the $s$-path;
\item (\S\ref{subsec:spath}) the $s$-path \eqref{path}, a prototype of the required sequence $q_{1\ldots M}$;
\item (\S\ref{subsec:barrier}) the required barrier \eqref{blokk}.
\end{enumerate}
\item Proving the barrier construction \eqref{blokk}:
\begin{enumerate}[]
\item (\S\ref{subsec:alpha-eta}) $\alpha, \beta, \gamma, \eta$-notation for commonly used maximal partial likelihoods;
\item (\S\ref{subsec:betabound}) a bound \eqref{betas}  on $\beta$;
\item (\S\ref{subsec:llbound}) bounds \eqref{krim}, \eqref{tsiv}, \eqref{sugar}, and \eqref{p} on common likelihood ratios;
\item (\S\ref{subsec:gammabound})$\gamma_j\le const\times  \gamma_1$;
\item (\S\ref{subsec:gammaboundmore}) Further bounds \eqref{liim}, \eqref{pulk} on likelihoods;
\item (\S\ref{subsec:etabound}) $\eta_j\le const\times  \eta_1$;
\item (\S\ref{subsec:etaone}) a special representation of $\eta_1$ \eqref{on};
\item (\S\ref{subsec:deltaone}) an implication of  \eqref{kohus} and \eqref{on} for $\delta_1(x_{lL})$;
\item \hspace{1cm}  $x_{kL}$ is a $(kL+m+P)$-order 1-node:
 \begin{enumerate}[]
      \item (\S\ref{subsec:onenode}) proof
      \item (\S\ref{subsec:auxproof35}) proof of an auxiliary inequality \eqref{lill}.
  \end{enumerate}
\end{enumerate}
\item (\S\ref{subsec:M}) Completion of the $s$-path to $q_{1\ldots M}$ and conclusion.
\end{enumerate}

\subsubsection{Maximal probability transitions $p^*_i$ and maximal likelihood ratio $A$.}
\label{subsec:I}
Let
\begin{eqnarray}
  \label{eq:maxtransandratio}
p^*_i=\max_{j\in S}\{p_{j\,i}\},\,i\in S, &\text{and}&  A=\max_{i\in S}\max_{j\in S}\Big\{{p^*_i\over p_{ji}}:p_{ji}>0\Big\}
\end{eqnarray}
be defined as above.

\subsubsection{$\mathcal{X}_l\subset \mathcal{X}$.}
\label{subsec:Xl}
It follows from the assumption \eqref{lll} and finiteness of $S$ that there exists an 
$\epsilon>0$ such that for all $l\in S$
\begin{equation}\label{tarn}
P_l({\cal X}_l)>0,\quad\text{where}~{\cal X}_l\stackrel{\mathrm{def}}{=} \Big\{x\in\mathcal{X}: \max_{i,i\ne
l}\{p^*_i f_i(x)\}< (1-\epsilon)  p^*_lf_l(x)\Bigr\}.
\end{equation}
(Note that $p^*_l>0$ for all $l\in S$ by irreducibility of $Y$.) Also note that  
the sets  ${\cal X}_l,l\in S$ are disjoint and have positive reference measure 
$\lambda({\cal X}_l)>0$.
\subsubsection{$\mathcal{Z}\subset\mathcal{X}$ and $\delta-K$ bounds on cluster densities $f_i$, $i\in C$}
\label{subsec:Z}
Let $C$ be a cluster as in the assumptions of the Lemma with the corresponding
sub-stochastic matrix $\mathbb{Q}$. The existence of $C$ implies the
existence of a set $\hat{{\cal Z}}\subset G_C(=\cap_{i\in C }G_i)$ and $\delta>0$,
 such that $\lambda (\hat{{\cal Z}})>0$, and $\forall z\in
{\hat{\cal Z}}$, the following statements hold:
\begin{enumerate}[(i)]
    \item $\min_{i\in C} f_i(z)>\delta$;
    \item $\max_{j\not \in C} f_j(z)=0$.
\end{enumerate}
Indeed, if no $\hat{{\cal Z}}$ and $\delta>0$ existed with property (i), 
we would have \\
$\lambda \left( \cap_{i\in C }(G_i \cap\{z\in\mathcal{X}:~f_i(z)>0\}) \right)=0$,
contradicting the first defining property of cluster: 
$P_j(G_C\cap _{i\in C}\{x\in \mathcal{X}:~f_i(x)>0\})>0$ (with any $j\in C$).
Now, if $\hat{{\cal Z}}$ did not satisfy (ii), 
we would remove from it $\hat{{\cal Z}}\cap \cup_{i\not\in C}\{z\in\mathcal{X}:~f_i(z)>0\}$
as this would not reduce its $\lambda$ measure. This is due to the second condition in the
definition of cluster which implies $\lambda(G_C \cap \{z\in\mathcal{X}:~f_i(z)>0\})=0$ for all $i\not\in C$.

Evidently, $K>0$ can be chosen sufficiently large to make
$\lambda (\{z\in\mathcal{X}:~f_i(z)\ge K\})$ arbitrarily small, and in particular, to
guarantee that 
$\lambda (\{z\in\mathcal{X}:~f_i(z)\ge K\})<\frac{\lambda (\hat{{\cal Z}})}{|C|}$.
Clearly then, redefining 
$\hat{{\cal Z}}\stackrel{\mathrm{def}}{=}
\hat{{\cal Z}}\cap\{z\in\mathcal{X}:~f_i(z)<K,~i\in C\}$ preserves 
$\lambda (\hat{{\cal Z}})>0$.  
Next, consider
\begin{equation}\label{moot}
\lambda(\hat{{\cal Z}}\backslash (\cup_{l\in S}{\cal X}_l)).
\end{equation}
If \eqref{moot} is positive, then define
\begin{equation}
  \label{eq:defineZ1}
{\cal Z}\stackrel{\mathrm{def}}{=}\hat{{\cal Z}}\backslash (\cup_{l\in S}{\cal X}_l).
\end{equation}
If \eqref{moot} is zero, then there must be  $s\in C$ such that
$$\lambda(\hat{{\cal Z}}\cap {\cal X}_s)>0$$
and in this case, let
\begin{equation}
  \label{eq:defineZ2}
{\cal Z}\stackrel{\mathrm{def}}{=}\hat{{\cal Z}}\cap {\cal X}_s.
\end{equation}
Such $s\in S$ must clearly exist since $\lambda(\hat{{\cal Z}})>0$ but
$\lambda(\hat{{\cal Z}}\backslash (\cup_{l\in S}{\cal X}_l))=0$. 
To see that $s$ must necessarily be in the cluster $C$, 
note  
 $\forall s\not\in C$, $f_s(z)=0$ $\forall z\in{\hat{\cal Z}}$, which implies 
${\hat{\cal Z}}\cap {\cal X}_s =\emptyset$.
\subsubsection{Sequences $\mathbf{s}$,  $\mathbf{a}$, and $\mathbf{b}$ of  states in $S$}
\label{subsec:abs} Let us define an auxiliary sequence of states $q_1$, $q_2$, and so on, as follows:
If \eqref{moot} is zero, that is, if  
${\cal Z}=\hat{{\cal Z}}\cap {\cal X}_s$ for some $s\in C$, then define $q_1=s$,
otherwise let $q_1$ be an arbitrary state in $C$.
Let $q_2$ be a state with maximal probability of transition to $q_1$, i.e.: 
$p_{q_2\,q_1}=p^*_{q_1}$ (see \eqref{eq:maxtransandratio} for the $p^*$ notation). 
Suppose $q_2\ne q_1$. Then find $q_3$ with $p_{q_3\,q_2}=p^*_{q_2}$.
If $q_3\not\in \{q_1,q_2\}$, find $q_4:~p_{q_4\,q_3}=p^*_{q_3}$, and so on. 
Let $U$ be the first index such that $q_U\in \{q_1,\ldots,q_{U-1}\}$, 
that is, $q_U=q_T$ for some $T<U$.  
This means that there exists a sequence of states  $\{q_T,\ldots, q_U\}$ such that
 \begin{itemize}
    \item $q_T=q_{U}$
    \item $q_{T+i}=\arg\max_{j} p_{jq_{T+i-1}},\quad i=1, \ldots, U-T.$
\end{itemize}
To simplify the notation and without loss of generality, assume $q_U=1$. Reorder and rename the
states as follows:
\begin{align}\label{tsykkel}
&s_1:=q_{U-1},\, s_2:=q_{U-2},\ldots,
s_i:=q_{U-i},\ldots,s_L:=q_T=1\quad i=1,\ldots,L\stackrel{\mathrm{def}}{=}U-T,\\
&\label{side} a_1:=q_{T-1},\,a_2:=q_{T-2},\ldots,a_P:=q_1,\quad
P\stackrel{\mathrm{def}}{=}T-1.
\end{align}
Hence,
$$\{q_1,\ldots,q_{T-1},q_T,q_{T+1},\ldots,q_{U-1},q_U\}=\{a_P,\ldots,a_1,1,s_{L-1},\ldots,s_1,1\}.$$
Note that if $T=1$, then $P=0$ and
$\{q_1,\ldots,\ldots,q_{U-1},q_U\}=\{1,s_{L-1},\ldots,s_1,1\}.$
We have thus introduced special sequences $\mathbf{a}=(a_1,a_2,\ldots,a_P)$ and
$\mathbf{s}=(s_1,s_2,\ldots,s_{L-1},1)$.
Clearly,
\begin{align}\label{klass}
p_{s_{i-1}\,s_i}&=p^*_{s_i},\quad i=2,\ldots, L,\quad p^*_{s_1}=p_{1\,s_1}\\
p_{a_{i-1}\,a_i}&=p^*_{a_i},\quad i=2,\ldots, P,\quad p^*_{a_1}=s_L=1.\nonumber
\end{align}
Next, we are going to exhibit $\mathbf{b}=(b_1,\ldots b_R)$, another auxiliary sequence for some $R\ge 1$, 
characterized as follows:
\begin{enumerate}[(i)]\label{eq:defineb}
    \item $b_R=1$;
    \item there exists $b_0\in C$ such that
    $p_{b_0\,b_1}p_{b_1\,b_2}\cdots
    p_{b_{R-1}\,b_R}>0$
    \item if $R>1$, then $b_{i-1}\ne b_i$ for every $i=1,\ldots,R$.
\end{enumerate}
Thus, the path $b_1,\ldots b_{R-1},b_R$ connects cluster $C$ to state 1 
in $R$ steps. Let us also require that $R$ be minimal such. Clearly 
such $\mathbf{b}$ and $b_0$ do exist due to irreducibility of $Y$. Specifically,
for any two states in $S$ in general, and for any state in $C$ and state $1$ in particular,
there exists a (finite) transition path of a positive probability. Note also that minimality 
of $R$ guarantees (iii) (in the special case of $R=1$ it may happen that $b_1=1\in S$ 
and $p_{1\,1}>0$, in which case $b_0$ can be taken to be also $1$). 
\subsubsection{$k$, the number of $\mathbf{s}$ cycles inside the $s$-path}
\label{subsec:k}
Let $\mathbb{Q}^{m}$ be the $m$-th power of  the
sub-stochastic matrix $\mathbb{Q}=(p_{ij})_{i,j\in C}$; let $q_{ij}$ be the entries of $\mathbb{Q}^{m}$. 
By the assumption, $q_{ij}>0$ for every $i,j\in C$. This means that for every $i,j\in C$, there exists a
path from $i$ to $j$ of length $m$  that has a positive probability.
Let $q^*_{ij}$ be the probability of a maximum probability path
from $i$ to $j$. In other words, for every $i,j\in C$, there exist
states $w_1,\ldots,w_{m-1}\in C$ such that
\begin{equation}\label{maxprobpath}
p_{iw_1}p_{w_1w_2}\cdots
p_{w_{m-1}w_{m-1}}p_{w_{m-1}j}=q^*_{ij}>0.\end{equation} Denote
by $q$ 
\begin{equation}\label{eq:qpositive}
\min_{i,j\in C}q^*_{i\,j}>0.
\end{equation}
Next, choose $k$ sufficiently large for the following to hold:
\begin{equation}
  \label{eq:largek}
 (1-\epsilon)^{k-1}<q^2({\delta\over K})^{2m}A^{-R}, 
\end{equation}
where $A$ and $\epsilon$  are as in \eqref{eq:maxtransandratio} and \eqref{tarn}, respectively, 
and $\delta$ and $K$ are introduced in \S\ref{subsec:Z}.
\subsubsection{The $s$-path}
\label{subsec:spath}
We now fix the state-sequence
\begin{equation}\label{path}
b_0,b_1,\ldots, b_R,s_1,s_2,\ldots, s_{2Lk},a_1,\ldots,a_P,
\end{equation}
where $s_{Lj+i}=s_i$, $j=1,\ldots,2k-1$, $i=1,\ldots,L$,
(and in particular $s_{Lj}=1$, $j=1,\ldots,2k$). The sequence \eqref{path} will be called
the {\em $s$-path}. The $s$-path is a concatenation  of $2k$ $\mathbf{s}$ cycles
$s_1,\ldots,s_L$, the beginning  and the end of which are connected to the cluster $C$ 
via positive probability paths $\mathbf{b}$ and $\mathbf{a}$, respectively 
(recall that $a_P=q_1\in C$ and $b_R=1$ by construction). Additionally, the
$b_R,s_1,s_2,\ldots, s_{2Lk},a_1,\ldots,a_P$-segment of the $s$-path
\eqref{path} has the important property \eqref{klass}, i.e. every consecutive transition along 
this segment occurs with the maximal transition probability given
its destination state. 
(However, $\mathbf{b}$, the beginning of the $s$-path, need  not satisfy
this property.) The $s$-path comes very close to being the sequence $q_{1\ldots M}$ required by the Lemma 
and will be completed to $q_{1\ldots M}$ in \S\ref{subsec:M}. 
In fact, the  idea of the Lemma and its proof is to exhibit (a cylinder subset of) observations such that once
emitted along the $s$-path, these observations would trap the Viterbi backtracking so that the latter winds up 
on the $s$-path. 
That will guarantee that an observation corresponding to the beginning of the $s$-path, is, as desired, a node.
\subsubsection{The barrier}
\label{subsec:barrier}
Consider the following sequence of observations
\begin{equation}\label{blokk}
z_0,z_1,\ldots,z_{m},x'_1,\ldots,x'_{R-1},x_0,x_1,\ldots,x_{2Lk},x^{''}_1,\ldots,x^{''}_P,z'_1,\ldots,z'_{m},
\end{equation}
 where $z_0,z_i,z'_i\in {\cal Z},~i=1,\ldots,m;$ $x'_i\in  {\cal X}_{b_i},~i=1,\ldots,R-1;$ and  
$$x_0\in {\cal X}_1,\quad x_{i+Lj}\in {\cal X}_{s_i},~j=1,\ldots,2k-1,~i=1,\ldots,L;
\quad x^{''}_i\in {\cal X}_{a_i},i=1,\ldots,P.$$
From this point on throughout \S\ref{subsec:onenode}, we shall be proving that $x_{Lk}$ is a  
1-node of order $(kL+m+P)$, and, therefore, that \eqref{blokk} is a 1-barrier of order $(kL+m+P)$.\\\\
Let $u\ge 2Lk+2m+1+P+R$ and let $x_1,\ldots,x_u$ be any sequence of observations terminating in
the $2Lk+2m+1+P+R$ observation long sequence of \eqref{blokk}. 
\subsubsection{$\alpha$, $\beta$, $\gamma$, $\eta$}
\label{subsec:alpha-eta}
Recall the definition of the scores $\delta_u(i)$ \eqref{eq:delta} and the maximum partial
likelihoods $p^{(r)}_{i\,j}(u)$ \eqref{eq:pr}. 
Now, we need to abbreviate some of the notation as follows. Namely, we 
denote by $\delta_i(x_l)$ (resp. $\delta_i(z_l)$) the scores corresponding to the observation $x_l$
(resp. $z_l$). Similarly, we denote by $p^{(r)}_{i\,j}(x_l)$ (resp.
$p^{(r)}_{i\,j}(z_l)$) the maximum partial likelihoods 
corresponding to the observation $x_l$ (resp. $z_l$). Formally, for any $i,j\in S$
and appropriate $r\ge 0$, the abbreviated notation is as follows:
\begin{align}\label{eq:shortscores}
\delta_i(x_l)&:=\delta_{u-P-m-2kL+l}(i),~p_{ij}^{(r)}(x_l):=p_{ij}^{(r)}(u-P-m-2kL+l),~0\le l\le 2kL;\\
\nonumber
p_{ij}^{(r)}(x'_l)&:=p_{ij}^{(r)}(u-P-m-2kL-R+l),~1\le l\le R-1;\\
\nonumber
\delta_i(z_l)&:=\delta_{u-2Lk-2m-P-R+l}(i),~p_{ij}^{(r)}(z_l):=p_{ij}^{(r)}(u- 2Lk-2m-P-R+l),0\le l\le m;\\
\nonumber
\delta_i(z'_l)&:=\delta_{u-m+l}(i),~p_{ij}^{(r)}(z'_l):=p_{ij}^{(r)}(u-m+l),~1\le l\le m. \\
\end{align}
Also, we will be frequently using the scores corresponding to $z_0$,
$x'_1$, $x_{Lk}$, and $x_{2Lk}$, hence the following further abbreviations:
$$\alpha_i:=\delta_i(z_0),\quad \beta_i:=\delta_i(z_m),\quad
\gamma_i:=\delta_i(x_0),\quad \eta_i:=\delta_i(x_{Lk}).$$
Note  that $\forall j\not \in C$, $f(z_0)=f_j(z'_l)=f_j(z_l)=0$, $l=1,\ldots,m$
by construction of ${\cal Z}$ (\S\ref{subsec:Z}). Hence,
$\alpha_j=\beta_j=0$ $\forall j\not\in C$, and a more general implication is
that for every $j\in S$
\begin{align}\label{kreeka1}
\beta_j&=\max_{i\in
C}\alpha_ip_{ij}^{(m-1)}(z_0)f_j(z_m)=\alpha_{i_{\beta}(j)}p_{i_{\beta}(j)\,j}^{(m-1)}(z_0)f_j(z_m)~
\text{for some }i_{\beta}(j)\in C;\\
\label{kreeka2} \gamma_j&=\max_{i\in C}
\beta_ip_{ij}^{(R-1)}(z_m)f_j(x_0)=\beta_{i_{\gamma}(j)}p_{i_{\gamma}(j)\,j}^{(R-1)}(z_m)f_j(x_0)~
\text{for some }i_{\gamma}(j) \in C.
\end{align}
Also note the following representation of $\eta_j$ in terms of $\gamma$ that we will use:
\begin{equation}
\label{kreeka3} \eta_j=\max_{i\in S}
\gamma_ip_{i\,j}^{(kL-1)}(x_0)f_j(x_{kL})=\gamma_{i_{\eta}(j)}p_{{i_{\eta}(j)}\,j}^{(kL-1)}(x_0)f_j(x_{kL})~
\text{for some }i_{\eta}(j)\in S.
\end{equation}
\subsubsection{Bounds on $\beta$}
\label{subsec:betabound}
Recall (\S\ref{subsec:abs}) that $b_0\in C$. We show that for every $j\in S$
\begin{align}\label{betas}
\beta_j <q^{-1}\Bigl({K\over \delta}\Bigr)^{m}\beta_{b_0}.
\end{align}
Fix $j\in S$ and consider $\alpha_{i_\beta(j)}$ from \eqref{kreeka1}. Let
$v_1,\ldots,v_{m-1}$ be a path that realizes $p_{ij}^{(m-1)}(z_0)$.
Then
$$\beta_j=\alpha_{i_\beta(j)}p_{i_\beta(j)\,v_1}f_{v_1}(z_1)p_{v_1\,v_2}f_{v_2}(z_2)\cdots
p_{v_{m-1}\,j}f_{j}(z_m)<\alpha_{i_\beta(j)}K^m.$$ (The last inequality follows
from the definition of ${\cal Z}$, \S\ref{subsec:Z}.)  Let $w_1,\ldots, w_{m-1}$ be a 
maximum probability path from $i_{\beta(j)}$ to $b_0$ as in \eqref{maxprobpath}. Thus,
$$\beta_{b_0}\geq \alpha_{i_\beta(j)}p_{i_\beta(j)\,b_0}^{(m-1)}(z_0)f_{b_0}(z_m)\geq
\alpha_{i_\beta(j)}p_{i_\beta(j)\,w_1}f_{w_1}(z_1)p_{w_1\,w_2}f_{w_2}(z_2)\cdots
p_{w_{m-1}\,b_0}f_{b_0}(z_m) \geq \alpha_{i_\beta(j)} q \delta^{m}.$$ (The last inequality again 
follows from the definition of ${\cal Z}$, \S\ref{subsec:Z}.) 
Since $q>0$ \eqref{eq:qpositive}, we thus obtain:
\begin{equation*}
\beta_j<\alpha_{i_\beta(j)}K^m\leq \frac{\beta_{b_0}}{q \delta^{m}} K^m,
\end{equation*}
as required.
\subsubsection{Likelihood ratio bounds}
\label{subsec:llbound}
We prove the following claims
\begin{align}\label{krim}
    &p^{(L-1)}_{i1}(x_{lL})\leq p^{(L-1)}_{11}(x_{lL}),\quad
    \forall i\in S,\quad \forall l=0,\ldots,2k-1;\\
    \label{tsiv}
    &{p^{(L-1)}_{ij}(x_{lL})f_j(x_{(l+1)L})\over
    p^{(L-1)}_{11}(x_{lL})f_1(x_{(l+1)L})}<1-\epsilon,\quad
    \forall i,j\in S,\,j\ne 1,\quad \forall l=0,\ldots,2k-1;\\
    \label{sugar}
    &p^{(R-1)}_{ij}(z_m)f_j(x_0)\leq A^R p^{(R-1)}_{b_01}(z_m)f_1(x_0),\quad \forall i,j\in
    S;\\
    \label{p}
    &{p^{(m+P-1)}_{ij}(x_{2kL})\over p^{(m+P-1)}_{1j}(x_{2kL})}\leq
q^{-1}\Bigl({K\over \delta}\Bigr)^{m-1},\quad \forall j\in C,\,\forall i\in S.
\end{align}
If $L=1$, then \eqref{krim} becomes $p_{i\,1}\leq p_{1\,1}$ for all $i\in S$, which is
true by the assumption $p^*_1=p_{1\,1}$ made in the course of constructing the $\mathbf{s}$ sequence (\S\ref{subsec:abs}).
If $L=1$, then \eqref{tsiv} becomes 
$${p_{ij}f_j(x_{l+1})\over p_{11}f_1(x_{l+1})}<1-\epsilon,\quad \forall i,j\in S, j\ne 1,$$
and thus, since $x_{l+1}\in {\cal X}_1,~0\le l<2k$ in this case, \eqref{tsiv} is true by the definition of 
${\cal X}_1$ (\S\ref{subsec:Xl}) (and the fact that $p^*_1=p_{1\,1}$).
Let us next prove \eqref{krim} and \eqref{tsiv} for the case $L>1$. 
Consider any $l=0,1,\ldots,2k-1$. Note that the definitions of the $s$-path \eqref{path}, 
${\cal X}_{s_i}$ (\S\ref{subsec:Xl}), and the fact that $x_{lL+i}\in {\cal X}_{s_i}$ for $1\le i <L$ 
imply that given observations $x_{Ll+1},\ldots, x_{L(l+1)-1}$, the path
$s_1,\ldots,s_{L-1}$ realizes the maximum in
$p^{(L-1)}_{11}(x_{Ll})$, i.e.
\begin{equation}\label{tee}
p_{11}^{(L-1)}(x_{lL})=p_{1\,s_1}f_{s_1}(x_{lL+1})p_{s_1\,s_2}\cdots
p_{s_{L-2}\,s_{L-1}}f_{s_{L-1}}(x_{(l+1)L-1})p_{s_{L-1}\,1}.
\end{equation}
(Indeed, $$p_{1\,s_1}f_{s_1}(x_{lL+1})p_{s_1\,s_2}\cdots
p_{s_{L-2}\,s_{L-1}}f_{s_{L-1}}(x_{(l+1)L-1})p_{s_{L-1}\,1}=
p^*_{s_1}f_{s_1}(x_{lL+1})p^*_{s_2}\cdots
p^*_{s_{L-1}}f_{s_{L-1}}(x_{(l+1)L-1})p^*_{1},$$
and for $i=1,2,\ldots,L-1$, $p^*_{s_i}f_{s_i}(x_{lL+i})\ge p_{hj}f_{j}(x_{lL+i})$ for any $h,j\in S$.)
Suppose $j\neq 1$ and 
$t_1,\ldots,t_{L-1}$ realizes $p_{ij}^{(L-1)}(x_{lL}),$ i.e.
\begin{equation}\label{kohv}
p_{ij}^{(L-1)}(x_{lL})=p_{i\,t_1}f_{t_1}(x_{lL+1})p_{t_1\,t_2}\cdots
p_{t_{L-2}\,t_{L-1}}f_{t_{L-1}}(x_{(l+1)L-1})p_{t_{L-1}\,j}.
\end{equation}
Hence, with $t_0$ and  $t_L$ standing for $i$ and $j$, respectively (and $s_0=s_L=1$),
the left-hand side of \eqref{tsiv} becomes
\begin{align*}
\Bigl({p_{t_0\,t_1}f_{t_1}(x_{lL+1})\over
    p_{s_0\,s_1}f_{s_1}(x_{lL+1})}\Bigr)\Bigl({p_{t_1\,t_2}f_{t_2}(x_{lL+2})\over
    p_{s_1\,s_2}f_{s_2}(x_{lL+2})}\Bigr)\cdots \Bigl({p_{t_{L-2}\,t_{L-1}}f_{t_{L-1}}(x_{(l+1)L-1})
\over
p_{s_{L-2}\,s_{L-1}}f_{s_{L-1}}(x_{(l+1)L-1})}\Bigr)\Bigl({p_{t_{L-1}\,t_L}f_j(x_{(l+1)L})\over
p_{s_{L-1}\,s_L}f_1(x_{(l+1)L})}\Bigr).\end{align*}
For $h=1,\ldots,L$ such that  $t_h\ne s_h$,  
\begin{equation}\label{kuku}
{p_{t_{h-1}\,t_h}f_{t_h}(x_{lL+h})\over
p_{s_{h-1}\,s_h}f_{s_h}(x_{lL+h})}<1-\epsilon,\quad\text{since }x_{lL+h}\in \mathcal{X}_{s_h}. 
\end{equation}
For all other $h$, $s_h=t_h$ and therefore, the left-hand side of \eqref{kuku}
becomes ${p_{t_{h-1}\,t_h}\over p_{s_{h-1}\,s_h}}=\frac{p_{t_{h-1}\,s_h}}{p^*_{s_h}} \leq 1$ (by property \eqref{klass}).
Since the last term of the product above does satisfy \eqref{kuku} ($j\ne 1$), \eqref{tsiv} is thus proved.
Suppose next that $t_1,\ldots,t_{L-1}$ realizes $p_{i1}^{(L-1)}(x_{lL})$. With $s_0=1$ and $t_0=i$, similarly to
the previous arguments, we have
$${p_{i\,1}^{(L-1)}(x_{lL})\over p_{1\,1}^{(L-1)}(x_{lL})}=
\prod_{h=1}^{L-1} \Bigl({p_{t_{h-1}\,t_h}f_{t_h}(x_{lL+h})\over
p_{s_{h-1}\,s_h}f_{s_h}(x_{lL+h})}\Bigr){p_{t_{L-1}\,1}\over
p_{s_{L-1}\,1}}\leq 1,$$ implying \eqref{krim}.\\\\
Let us now prove \eqref{sugar}. To that end, note that for all states
$h,i,j\in S$ such that $p_{jh}>0$, it follows from the definitions $\eqref{eq:maxtransandratio}$ that
\begin{equation}\label{A}
{p_{ih}\over p_{jh}}\leq {p^*_h \over p_{jh}}\leq A.\end{equation}
If $R=1$, then \eqref{sugar} becomes $$p_{ij}f_j(x_0)\leq
Ap_{b_01}f_1(x_0).$$ By the definition of ${\cal X}_1$ (recall that
$x_0\in {\cal X}_1$), we have that for every $i,j\in S$
$p_{ij}f_j(x_0)\le p^*_1f_1(x_0)$. Using \eqref{A} with $h=1$ and $j=b_{0}$, we get
$p^*_1f_1(x_0)\leq Ap_{b_0\,1}f_1(x_0)$ ($p_{b_0\,1}>0$ by the construction of $\mathbf{b}$ 
\S\ref{subsec:abs}). Putting these all together, we obtain
\begin{equation*}
p_{ij}f_j(x_0)<p^*_1f_1(x_0)\leq Ap_{b_0 1}f_1(x_0),~\text{as required.}
\end{equation*}
Consider the case $R>1$. Let $t_1,\ldots,t_{R-1}$ be a path that
realizes $p^{(R-1)}_{ij}(z_m)$, i.e.
$$p^{(R-1)}_{ij}(z_m)=p_{i\,t_1}f_{t_1}(x'_1)p_{t_1\, t_2}f_{t_2}(x'_2)\cdots
p_{t_{R-2}\,t_{R-1}}f_{t_{R-1}}(x'_{R-1})p_{t_{R-1}j}.$$
By 
the definition of $\mathcal{X}_l$ (\S\ref{subsec:Xl}) and
the facts that $x'_r\in \mathcal{X}_{b_r}$, $r=1,2,\ldots,R-1$, and $x_0\in\mathcal{X}_1$, 
we have   
\begin{equation}\label{eq:claim3proof}
p^{(R-1)}_{ij}(z_m)f_j(x_0)\le p^*_{b_1}f_{b_1}(x'_1)p^*_{b_2}f_{b_2}(x'_2)\cdots
p^*_{b_{R-1}}f_{b_{R-1}}(x'_{R-1})p^*_1f_1(x_0).\end{equation} 
Now, by the construction of $\mathbf{b}$ (\S\ref{subsec:abs}), $p_{b_{r-1}\,b_r}>0$ for $r=1,\ldots,R$, ($b_R=1$).
Thus,  the argument behind \eqref{A} applies here to bound the right-hand side of \eqref{eq:claim3proof} from above
by $$Ap_{b_0\,b_1}f_{b_1}(x'_1)Ap_{b_1\,b_2}f_{b_2}(x'_2)\cdots
Ap_{b_{R-2}\,b_{R-1}}f_{b_{R-1}}(x'_{R-1})Ap_{b_{R-1}\,1}f_1(x_0)=A^Rp^{(R-1)}_{b_0\,1}(z_m)f_1(x_0),$$
as required. \\\\
Let us now prove \eqref{p}.
If $m=1$ then \eqref{p} becomes
\begin{equation}\label{teem1}
p^{(P)}_{ij}(x_{2kL})\leq p^{(P)}_{1j}(x_{2kL})q^{-1},\quad \forall
j\in C,\,\forall i\in S.
\end{equation}
If $P=0$, then \eqref{teem1} reduces to $p_{ij}\leq p_{1j}q^{-1}$ which is true, because in this case 
the state $q_1=q_T=1$ belongs to $C$ (\S\ref{subsec:abs}) and
$p_{1j}q^{-1}\geq 1$ (\eqref{maxprobpath}, \eqref{eq:qpositive} with $m=1$). 
To see why \eqref{teem1} is true with $P\geq 1$, note  
that by the same argument as used to prove \eqref{krim} and \eqref{tsiv}, we now get
\begin{equation}\label{tele}
p_{1\,a_P}^{(P-1)}(x_{2kL})f_{a_P}(x^{''}_P)\geq
p_{h',l}^{(P-1)}(x_{2kL})f_{l}(x^{''}_P),\quad \forall h,l\in
S.\end{equation} Also, since $a_P=q_1\in C$ (\S\ref{subsec:abs}),  
$p_{a_P\,j}q^{-1}\geq 1$ (\eqref{maxprobpath}, \eqref{eq:qpositive} with $m=1$). Thus
\begin{eqnarray*}
p_{i\,j}^{(P)}(x_{2kL})&\stackrel{\mathrm{by~\eqref{eq:prrecurse}}}{=}&
\max_{l\in S} p_{i\,l}^{(P-1)}(x_{2kL})f_l(x^{''}_P)p_{l\,j}
                \stackrel{\mathrm{by~\eqref{tele}}}{\le} p^{(P-1)}_{1a_P}(x_{2kL})f_{a_P}(x''_P)\max_{l\in S}p_{l\,j}\le\\
                &\le&p^{(P-1)}_{1\,a_P}(x_{2kL})f_{a_P}(x''_P) \le 
p_{1\,a_P}^{(P-1)}(x_{2kL})f_{a_P}(x^{''}_P)p_{a_P\,j}q^{-1}\stackrel{\mathrm{by~\eqref{eq:prrecurse}}}{\le} 
p^{(P)}_{1\,j}(x_{2kL})q^{-1}.
\end{eqnarray*}
For $m>1$, let $t_1,t_2,\ldots,t_{m-1}$ be a path realizing $p^{(m-1)}_{h\,j}(x^{''}_P)$.
Thus, 
\begin{equation}\label{eq:auxbound1}
p^{(m-1)}_{h\,j}(x^{''}_P)=p_{h\,t_1}f_{t_1}(z'_1)p_{t_1\,t_2}f_{t_2}(z'_2)\cdots
f_{t_{m-1}}(z'_{m-1})p_{t_{m-1}j}< K^{m-1}.
\end{equation}
(This is true since $z'_r\in \mathcal{Z}$ for $r=1,2,\ldots,m-1$ (\S\ref{subsec:Z})
and thus, for $p^{(m-1)}_{h\,j}(x^{''}_P)$ to be positive it is 
necessary that  $t_r\in C$, $r=1,\ldots,m-1$, implying $f_{t_r}(z'_r)<K$.) Now, 
let $t_1,t_2,\ldots,t_{m-1}$ realize $p^{(m-1)}_{a_P\,j}(x^{''}_P)$,
which is clearly positive, with $t_r\in C$, $r=1,\ldots,m-1$ 
($z'_r\in \mathcal{Z}$ for $r=1,2,\ldots,m-1$), and $a_P, j\in C$ 
(recall the positivity assumption on $Q^m$, \S\ref{subsec:k}).  
We thus have $p^{(m-1)}_{a_P\,j}(x^{''}_P)=p_{a_P\,t_1}f_{t_1}(z'_1)p_{t_1\,t_2}f_{t_2}(z'_2)\cdots
f_{t_{m-1}}(z'_{m-1})p_{t_{m-1}j}\ge $
\begin{equation}\label{eq:auxbound2}
\ge q^*_{a_P\,j}f_{t_1}(z'_1)f_{t_2}(z'_2)\cdots
f_{t_{m-1}}(z'_{m-1})>q \delta^{m-1}.
\end{equation}
Combining the  bounds of \eqref{eq:auxbound1} and \eqref{eq:auxbound2} ($q>0$, \eqref{eq:qpositive}), we obtain :
\begin{equation} \label{eq:teem1further}
p^{(m-1)}_{h\,j}(x^{''}_P) < p^{(m-1)}_{a_P\,j}(x^{''}_P)\Bigl({K\over \delta}\Bigr)^{m-1}q^{-1}. 
\end{equation}
Finally,
\begin{align*}
p_{ij}^{(P+m-1)}(x_{2kL})&\stackrel{\mathrm{by~\eqref{eq:prrecurse}}}{=}\max_{l\in S}
p_{il}^{(P-1)}(x_{2kL})f_l(x^{''}_P)p^{(m-1)}_{lj}(x^{''}_P)
\stackrel{\mathrm{by~\eqref{tele},~\eqref{eq:teem1further}}}{<}\\
&\stackrel{\mathrm{by~\eqref{tele},~\eqref{eq:teem1further}}}{<}
p_{1\,a_P}^{(P-1)}(x_{2kL})f_{a_P}(x^{''}_P)p^{(m-1)}_{a_P\,j}(x^{''}_P)\Bigl({K\over
\delta}\Bigr)^{m-1}q^{-1}\stackrel{\mathrm{by~\eqref{eq:prrecurse}}}{\le}\\
&\stackrel{\mathrm{by~\eqref{eq:prrecurse}}}{\le} p_{1j}^{(P+m-1)}(x_{2kL})\Bigl({K\over
\delta}\Bigr)^{m-1}q^{-1}.
\end{align*}
\subsubsection{$\gamma_j\le const \times \gamma_1$}
\label{subsec:gammabound}
Combining \eqref{kreeka2}, 
\eqref{betas}, 
and \eqref{sugar}, we get that
for every state $j\in S$,
\begin{align*}
\gamma_j&\stackrel{\mathrm{by~\eqref{kreeka2}}}{=}\beta_{i_\gamma(j)}p_{i_\gamma(j)\,j}^{(R-1)}(z_m)f_j(x_0)
\stackrel{\mathrm{by~\eqref{sugar}}}{\leq} \beta_{i_\gamma(j)}
p_{b_0\,1}^{(R-1)}(z_m)f_1(x_0)A^R \stackrel{\mathrm{by~\eqref{betas}}}{\leq} \\
&\stackrel{\mathrm{by~\eqref{betas}}}{\leq} q^{-1}\Bigl({K\over
\delta}\Bigr)^{m}A^R\beta_{b_0}p_{b_0\,1}^{(R-1)}(z_m)f_1(x_0) \leq U
\max_{i\in S} \beta_i p_{i\,1}^{(R-1)}(z_m)f_1(x_0)\stackrel{\mathrm{by~\eqref{kreeka2}}}{=}U\gamma_1,
\end{align*}
where
\begin{equation}\label{eq:defU}
U\stackrel{\mathrm{def}}{=}q^{-1}\Bigl({K\over
\delta}\Bigr)^{m}A^R.\end{equation}
Hence
\begin{equation}\label{con}
\gamma_j\leq U\gamma_1,\quad  \forall j\in S. \end{equation}
\subsubsection{Further bounds on likelihoods}
\label{subsec:gammaboundmore}
Let $l\ge 0$ and $n>0$ be integers such that  $l+n \leq 2k$ but arbitrary otherwise.
Expanding $p_{1\,1}^{(nL-1)}(x_{lL})$ recursively according with \eqref{eq:prrecurse}, we obtain
\begin{align}\label{prekohus}
p_{1\,1}^{(nL-1)}(x_{lL})=&\max_{i_1,i_2,\ldots,i_{n-1}\in S} p_{1\,i_1}^{(L-1)}(x_{lL})f_{i_1}(x_{(l+1)L})
p_{i_1\,i_2}^{(L-1)}(x_{(l+1)L})f_{i_2}(x_{(l+2)L}) \cdots\\
\nonumber \cdots
&p_{i_{n-2}\,i_{n-1}}^{(L-1)}(x_{(l+n-2)L})f_{i_{n-1}}(x_{(l+n-1)L})p_{i_{n-1}\,1}^{(L-1)}(x_{(l+n-1)L}).
\end{align}
Since  $p_{1\,i_1}^{(L-1)}(x_{lL})f_{i_1}(x_{(l+1)L})\le p_{1\,1}^{(L-1)}(x_{lL})f_{1}(x_{(l+1)L})$ for any $i_1\in S$,
as well as $$p_{i_{r-1}\,i_r}^{(L-1)}(x_{(l+r-1)L})f_{i_r}(x_{(l+r)L})\le p_{1\,1}^{(L-1)}(x_{(l+r-1)L})f_1(x_{(l+r)L})~
r=2,\ldots,n-1,~
 \text{by~  
\eqref{tsiv}},$$ and since $p_{i_{n-1}\,1}^{(L-1)}(x_{(l+n-1)L})\le p_{1\,1}^{(L-1)}(x_{(l+n-1)L})$ for any $i_{n-1}\in S$
by \eqref{krim}, maximization \eqref{prekohus} above is achieved as in \eqref{kohus} below:
\begin{align}\label{kohus}
p_{1\,1}^{(nL-1)}(x_{lL})=&p_{1\,1}^{(L-1)}(x_{lL})f_1(x_{(l+1)L})p_{11}^{(L-1)}(x_{(l+1)L})f_1
(x_{(l+2)L}) \cdots\\
\nonumber \cdots
&p_{1\,1}^{(L-1)}(x_{(l+n-2)L})f_1(x_{(l+n-1)L})p_{1\,1}^{(L-1)}(x_{(l+n-1)L}).
\end{align}
Now, we replace state $1$ by generic states $i,j\in S$ on the both ends of the paths in \eqref{prekohus}
and repeat the above arguments. Thus, also using \eqref{kohus}, we arrive at bound \eqref{liim} below:
\begin{align}\label{liim}
p_{ij}^{(nL-1)}(x_{lL})f_j(x_{(l+n)L})&\leq \prod_{u=l+1}^{l+n}
p_{11}^{(L-1)}(x_{(u-1)L})f_1(x_{uL})\stackrel{\mathrm{by~\eqref{kohus}}}{=}\\
&\stackrel{\mathrm{by~\eqref{kohus}}}{=}p_{11}^{(nL-1)}(x_{lL})f_1(x_{(l+n)L}),~\forall i,j\in S.
\nonumber
\end{align}

In particular, \eqref{liim} states
\begin{equation}\label{pulk}
p_{ij}^{(kL-1)}(x_0)f_j(x_{kL})\leq
p_{11}^{(kL-1)}(x_0)f_1(x_{kL}),\quad \forall i,j\in S.
\end{equation}
\subsubsection{$\eta_j\le const \times \eta_1$}
\label{subsec:etabound}
In order to see
\begin{equation}\label{gamma}
\eta_j\leq U\eta_1,\quad \forall j\in S,
\end{equation}
note that:
\begin{align*}
\eta_j\stackrel{\mathrm{\eqref{kreeka3}}}{=}&\max_{i\in S} \gamma_i p_{i\,j}^{(kL-1)}(x_0)f_j(x_{kL})
\stackrel{\mathrm{by~\eqref{pulk}}}{\leq}
\max_{i\in S} \gamma_i p_{1\,1}^{(kL-1)}(x_0)f_1(x_{kL})\stackrel{\mathrm{by~\eqref{con}}}{\leq}\\
    \stackrel{\mathrm{by~\eqref{con}}}{\leq}&U\gamma_1 p_{1\,1}^{(kL-1)}(x_0)f_1(x_{kL})
\stackrel{\mathrm{by~\eqref{kreeka3}}}{\le } U\eta_1.
\end{align*}
\subsubsection{A representation of  $\eta_1$}
\label{subsec:etaone}
Recall that $k$, the number of cycles in the $s$-path, was chosen sufficiently large for 
\eqref{eq:largek} to hold (in particular, $k>1$).
We now prove that there exists $\kappa\in \{1,\ldots,k-1\}$ such that
\begin{equation}\label{on}
\eta_1=\delta_{1}(x_{\kappa L})p_{1\,1}^{((k-\kappa)L-1)}(x_{\kappa L})f_1(x_{kL}).
\end{equation}
The relation \eqref{on} states that (given observations $x_1,x_2,\ldots,x_u$) a maximum-likelihood path  
(from time $1$, observation $x_1$) to time $u-m-P-kL$ (observation $x_{kL}$) 
goes through state $1$ at time $u-m-P-2kL+\kappa L$, that is when $x_{\kappa L}$ is observed.\\

To see this, suppose no such $\kappa$ exists to satisfy \eqref{on}. Then, 
applying \eqref{eq:prrecurse} to \eqref{kreeka3} and recalling that
$\delta_{1}(x_{\kappa L})$ is introduced by \eqref{eq:shortscores}, we would have
$$
\eta_1=\gamma_{j_\eta(1)}p_{j_\eta(1)\,j_1}^{(L-1)}(x_0)f_{j_1}(x_L)
p_{j_1\,j_2}^{(L-1)}(x_L)f_{j_2}(x_{2L})p_{j_2\,j_3}^{(L-1)}(x_{2L})\cdots
p_{j_{k-1}\,1}^{(L-1)}(x_{(k-1)L})f_1(x_{kL})
$$
for some $j_1\ne 1,\ldots,j_{k-1}\ne 1$. Furthermore, this would imply 
\begin{align}\label{jalle}
\nonumber
\eta_1&\stackrel{\mathrm{by~\eqref{tsiv},~\eqref{krim}}}{<}\gamma_{j_\eta(1)}(1-\epsilon)^{k-1}
\prod_{i=1}^{k}p_{1\,1}^{(L-1)}(x_{(i-1)L})f_1(x_{iL})\stackrel{\mathrm{by~\eqref{eq:largek}}}{<}\\
\nonumber
& \stackrel{\mathrm{by~\eqref{eq:largek}}}{<}
\gamma_{j_\eta(1)}q^2\left({\delta\over K}\right)^{2m}A^{-R} \prod_{i=1}^{k}p_{1\,1}^{(L-1)}(x_{(i-1)L})f_1(x_{iL})
\stackrel{\mathrm{by~\eqref{con}}}{\le}\\ 
\nonumber
& \stackrel{\mathrm{by~\eqref{con}}}{\le}
\gamma_1Uq^2\left ({\delta\over K}\right )^{2m}A^{-R} \prod_{i=1}^{k}p_{1\,1}^{(L-1)}(x_{(i-1)L})f_1(x_{iL})
\stackrel{\mathrm{by~\eqref{eq:defU}}}{=}\\ & \stackrel{\mathrm{by~\eqref{eq:defU}}}{=}
\gamma_1q\left ({\delta\over K}\right )^m \prod_{i=1}^{k}p_{1\,1}^{(L-1)}(x_{(i-1)L})f_1(x_{iL})
<\gamma_1 \prod_{i=1}^{k}p_{1\,1}^{(L-1)}(x_{(i-1)L})f_1(x_{iL}).
\end{align}
(The last inequality follows from $q\le 1$ \eqref{eq:qpositive} and $\delta<K$, \S\ref{subsec:Z}.) 
On the other hand, by definition \eqref{kreeka3} (and $k-1$-fold application of \eqref{eq:prrecurse}),
$\eta_1\ge \gamma_1 \prod_{i=1}^{k}p_{1\,1}^{(L-1)}(x_{(i-1)L})f_1(x_{iL}),$ which evidently contradicts 
\eqref{jalle} above.
Therefore, $\kappa$ satisfying \eqref{on} and $1\le \kappa< k$, does exist. 
\subsubsection{An implication  of  \eqref{kohus} and \eqref{on} for $\delta_1(x_{lL})$}
\label{subsec:deltaone}
Clearly, the arguments of the previous section (\S\ref{subsec:etaone}) are valid
if $k$ is replaced by any $l\in\{k,\ldots,2k\}$. Hence the following generalization of
\eqref{on}:
\begin{equation}\label{preojakaar}
\delta_1(x_{lL})=\delta_1(x_{\kappa(l)L})p^{((l-\kappa(l))L-1)}_{11}(x_{\kappa(l)L})f_1(x_{lL})
~\text{for some}~\kappa(l)<l.
\end{equation}
We apply \eqref{preojakaar} recursively, starting with  $\kappa^{(0)}:=l$ and returning
$\kappa^{(1)}:=\kappa(l)<l$. If $\kappa^{(1)}\leq k$, we stop, otherwise we substitute $\kappa^{(1)}$
for $l$, and obtain $\kappa^{(2)}:=\kappa(l)<\kappa^{(1)}$, and so, on until $\kappa^{(j)}\le k$ for some $j>0$.
Thus, 
\begin{equation}\label{preojakaar2}\hspace*{-4mm}
\delta_1(x_{lL})=\delta_1(x_{\kappa^{(j)}L})p^{((\kappa^{(j-1)}-\kappa^{(j)})L-1)}_{11}(x_{\kappa^{(j)}L})
f_1(x_{\kappa^{(j-1)}L})\cdots p^{((l-\kappa^{(1)})L-1)}_{11}(x_{\kappa^{(1)}L})
f_1(x_{lL}).
\end{equation}
Applying \eqref{kohus} to the appropriate factors of the right-hand side of \eqref{preojakaar2} above, 
we get:
\begin{align}\label{preojakaar3}
\delta_1(x_{lL})  = &\delta_1(x_{\kappa^{(j)}L})p^{(L-1)}_{11}(x_{\kappa^{(j)}L})f_1(x_{(\kappa^{(j)}+1)L})\cdots 
       p^{(L-1)}_{11}(x_{(k-1)L})f_1(x_{kL})\cdots\\
\nonumber & \cdots p^{(L-1)}_{11}(x_{kL})f_1(x_{(k+1)L})
           \cdots p^{(L-1)}_{11}(x_{(\kappa^{(j-1)}-1)L})f_1(x_{\kappa^{(j-1)}L})\cdots \\ \nonumber
  & \cdots p^{(L-1)}_{11}(x_{(\kappa^{(1)}-1)L})f_1(x_{\kappa^{(1)}L})\cdots p^{(L-1)}_{11}(x_{(l-1)L})f_1(x_{lL}).
\end{align}
Also, according to \eqref{kohus}, 
$$ \delta_1(x_{\kappa^{(j)}L})p^{(L-1)}_{11}(x_{\kappa^{(j)}L})f_1(x_{(\kappa^{(j)}+1)L})\cdots 
p^{(L-1)}_{11}(x_{(k-1)L}) =\delta_1(x_{\kappa^{(j)}L})p^{((k-\kappa^{(j)})L-1)}_{11}(x_{\kappa^{(j)}L}).$$
At the same time, 
\begin{equation}\label{preojakaar4}
\delta_1(x_{\kappa^{(j)}L})p^{((k-\kappa^{(j)})L-1)}_{11}(x_{\kappa^{(j)}L})f_1(x_{kL})
\stackrel{\mathrm{by~\eqref{eq:prrecurse2}}} \le \eta_1.
\end{equation}
However, we cannot have the strict inequality in \eqref{preojakaar4} above since that, via
\eqref{preojakaar3},  would contradict maximality of $\delta_1(x_{lL})$.  
We have thus arrived at 
\begin{equation}\label{ojakaar}
\delta_1(x_{lL})=\eta_1p^{(L-1)}_{11}(x_{{k}L})f_1(x_{(k +1)L})\cdots p^{(L-1)}_{11}(x_{(l-1)L})f_1(x_{lL}).
\end{equation}

In summary, for any $l\ge k$ and $l\le 2k$ there exists a realization of  $\delta_1(x_{lL})$ that goes
through state $1$ every time when $x_{iL}$, $i=k,\ldots, l$, is observed.
\subsubsection{$x_{kL}$ is a $(kL+m+P)$-order 1-node}
\label{subsec:onenode}
When we prove in \S\ref{subsec:auxproof35} that
for any  $i\in S,i\ne 1,$ and any$ j\in C$, 
\begin{equation}\label{lill}
\eta_ip_{ij}^{(kL+m+P-1)}(x_{kL})\leq
\eta_1p_{1j}^{(kL+m+P-1)}(x_{kL}),
\end{equation}
this will immediately imply that $x_{kL}$ is a  1-node of order $kL+m+P$. Indeed, 
let $l\in S$ be arbitrary. Since $f_j(z'_m)=0$ for every $j\in S\setminus C$, any maximum
likelihood path to state $l$ at time $u+1$ (observation $x_{u+1}$) must go
through a state in $C$ at time $u$ (observation $x_u=z'_m$.) Formally,
\begin{align*}
\eta_ip_{il}^{(kL+m+P)}(x_{kL})&= \max_{j\in
S}\eta_ip_{ij}^{(kL+m+P-1)}(x_{kL})f_j(z'_m)p_{jl}= \max_{j\in
C}\eta_ip_{ij}^{(kL+m+P-1)}(x_{kL})f_j(z'_m)p_{jl}    \\
&\stackrel{\mathrm{by~\eqref{lill}}}{\le}  \max_{j\in C}\eta_1p_{1j}^{(kL+m+P-1)}(x_{kL})f_j(z'_m)p_{jl}
\stackrel{\mathrm{by~\eqref{eq:prrecurse}}}{=}
\eta_1p_{1l}^{(kL+m+P)}(x_{kL}).
\end{align*}
Therefore, by Definition~\ref{rnode} $x_{kL}$ is a 1-node of order $kL+m+P$.
\subsubsection{Proof of  \eqref{lill}}
\label{subsec:auxproof35}
Let $i\in S$ and $j\in C$ be arbitrary. Let state $j^*\in S$ be such that
$$p_{i\,j}^{(kL+m+P-1)}(x_{kL})=
p_{i\,j^*}^{(kL-1)}(x_{kL})f_{j^*}(x_{2kL})p^{(m+P-1)}_{j^*\,j}(x_{2kL})=\nu(i,j^*)p^{(m+P-1)}_{j^*\,j}(x_{2kL}),$$
where
$$\nu(i,j)\stackrel{\mathrm{def}}{=}p_{ij}^{(kL-1)}(x_{kL})f_j(x_{2kL}),\quad\text{for all}~i,j\in S.$$
We consider the following two cases separately:
\begin{enumerate}[1.]
\item 
There exists a path realizing $p_{i\,j^*}^{(kL-1)}(x_{kL})$ and going
through state 1 at the time of observing $x_{lL}$ for some 
$l \in \{k,\ldots, 2k\}$.
\begin{equation}\label{ikka}
p_{i\,j^*}^{(kL-1)}(x_{kL})=p^{((l-k) L-1)}_{i\,1}(x_{kL})f_1(x_{lL})p^{((2k-l)L-1)}_{1\,j^*}(x_{lL}).
\end{equation}
Equation \eqref{ikka} above together with the fundamental recursion \eqref{eq:prrecurse2} yields
the following:
\begin{eqnarray}
    \label{eq:ikka2}\nonumber
\eta_ip_{i\,j^*}^{(kL-1)}(x_{kL})& \stackrel{\mathrm{by~\eqref{ikka}}}{=}&
\eta_ip^{((l-k) L-1)}_{i\,1}(x_{kL})f_1(x_{lL})p^{((2k-l)L-1)}_{1\,j^*}(x_{lL})
 \stackrel{\mathrm{by~\eqref{eq:shortscores},~\eqref{eq:prrecurse2}}}{\le}\\ 
& \stackrel{\mathrm{by~\eqref{eq:shortscores},~\eqref{eq:prrecurse2}}}{\le}&
\delta_1(x_{lL})p^{((2k-l)L-1)}_{1\,j}(x_{lL}).   
  \end{eqnarray}
At the same time, the right hand-side of \eqref{eq:ikka2} can be expressed as follows:
\begin{eqnarray}
    \label{eq:ikka3}\nonumber
\delta_1(x_{lL})p^{((2k-l)L-1)}_{1\,j^*}(x_{lL})&\stackrel{\mathrm{by~\eqref{ojakaar}}}{=}
&\eta_1p_{1\,1}^{((l-k)L-1)}(x_{kL})f_1(x_{lL})p^{((2k-l)L-1)}_{1\,j^*}
\stackrel{\mathrm{by~\eqref{kohus}}}{=}\\
&\stackrel{\mathrm{by~\eqref{kohus}}}{=}&\eta_1p_{1\,j^*}^{(kL-1)}(x_{kL}).  
  \end{eqnarray}
Therefore, if there exists $l \in \{k,\ldots, 2k\}$ such that \eqref{ikka} holds, 
we have by virtue of \eqref{eq:ikka2} and \eqref{eq:ikka3}:
\begin{equation}\label{ometigi}
\eta_ip_{i\,j^*}^{(kL-1)}(x_{kL})\leq
\eta_1p_{1\,j^*}^{(kL-1)}(x_{kL}),\quad\text{that is,}\quad \eta_i\nu(i,j^*)\leq
\eta_1\nu(1,j^*).
\end{equation}
Hence,
\begin{eqnarray*}
\eta_ip_{i\,j}^{(kL+m+P-1)}(x_{kL})&\stackrel{\mathrm{by~\eqref{ikka}}}{=}&
\eta_i\nu(i,j^*)p_{j^*\,l}^{(m+P-1)}(x_{2kL})\stackrel{\mathrm{by~\eqref{ometigi}}}{\le}\\
&\stackrel{\mathrm{by~\eqref{ometigi}}}{\le}&
\eta_1\nu(1,j^*)p_{j^*\,j}^{(m+P-1)}(x_{2kL})\stackrel{\mathrm{by~\eqref{eq:prrecurse}}}{\le}
\eta_1p_{1\,j}^{(kL+m+P-1)}(x_{kL})
\end{eqnarray*}
and \eqref{lill} holds.
\item 
Assume now that no path exists to satisfy \eqref{ikka}.
Argue as for \eqref{jalle} to get
\begin{equation}\label{jalle2}
\nu(i,j^*)<(1-\epsilon)^{k-1} \prod_{n=k+1}^{2k}p_{1\,1}^{(L-1)}(x_{(n-1)L})f_1(x_{nL}).
\end{equation} By \ref{kohus}, the (partial likelihood) product in the 
right-hand side of \eqref{jalle2} equals $\nu(1,1)$. Thus,
\begin{align}\label{lopp}\nonumber
\eta_i\nu(i,j^*) p^{(m+P-1)}_{j^*\,j}(x_{2kL})&\stackrel{\mathrm{by~\eqref{jalle2}}}{<}
\eta_{i}(1-\epsilon)^{k-1}\nu(1,1)p^{(m+P-1)}_{j^*\,j}(x_{2kL})
\stackrel{\mathrm{by~\eqref{eq:largek}}}{<} \\ \nonumber
&\stackrel{\mathrm{by~\eqref{eq:largek}}}{<}\eta_{i} q^2\left({\delta\over K}\right)^{2m}A^{-R} 
\nu(1,1)p^{(m+P-1)}_{j^*\,j}(x_{2kL})\stackrel{\mathrm{by~\eqref{eq:defU},~\eqref{gamma}}}{\le}\\
 &\stackrel{\mathrm{by~\eqref{eq:defU},~\eqref{gamma}}}{\le}\eta_1 q\left({\delta\over K}\right)^{m}
\nu(1,1)p^{(m+P-1)}_{j^*\,j}(x_{2kL}).
\end{align}
Hence, for every $j'\in S$,
\begin{align*}
\eta_i\nu(i,j')p^{(m+P-1)}_{j'\,j}(x_{2kL})&\stackrel{\mathrm{by~\eqref{ikka}}}{\le}
\eta_i\nu(i,j^*)p^{(m+P-1)}_{j^*\,j}(x_{2kL})\stackrel{\mathrm{by~\eqref{lopp}}}{<}\\
&\stackrel{\mathrm{by~\eqref{lopp}}}{<}
\eta_1 q\left({\delta\over K}\right)^{m}\nu(1,1)p^{(m+P-1)}_{j^*\,j}(x_{2kL})
\stackrel{\mathrm{by~\eqref{p}}}{\le}\\&\stackrel{\mathrm{by~\eqref{p}}}{\le}
\eta_1 \left({\delta\over K}\right)\nu(1,1)p^{(m+P-1)}_{1\,j}(x_{2kL})<
\eta_1 \nu(1,1)p^{(m+P-1)}_{1\,j}(x_{2kL})\stackrel{\mathrm{by~\eqref{eq:prrecurse}}}{\le}\\
&\stackrel{\mathrm{by~\eqref{eq:prrecurse}}}{\le} \eta_1p_{1\,j}^{(kL+m+P-1)}(x_{kL}),
\end{align*}
\noindent which, by virtue of \eqref{eq:prrecurse}, implies \eqref{lill}.
\end{enumerate}
\subsubsection{Completion of the $s$-path to $q_{1\ldots M}$ and conclusion}
\label{subsec:M}
Finally, let $$M=2m+2Lk+P+R+2,\quad r=kL+P+m,\quad l=1.$$ 
Recall from \S\ref{subsec:abs} that
$b_0\in C$. Since all the entries of $Q^m$ are positive, there exists a path
$v_0,v_1,\ldots,v_{m-1},b_0\in C$ such that $p_{v_i\,v_{i+1}}>0$ and
$p_{v_{m-1}b_0}>0$.
Similarly, there must exist a path $u_1,\ldots,u_{m}\in C$ such that $p_{u_i\,u_{i+1}}>0$
$\forall i=1,\ldots, m-1$ and $p_{a_P\,u_1}>0$ (recall that $a_P\in C$).
Hence, by these and the constructions of \S\ref{subsec:spath}, 
all of the  transitions of the following sequence occur with positive probabilities.
\begin{equation}\label{pathbig}
q_{1\ldots M}\stackrel{\mathrm{def}}{=}v_0,v_1,\ldots,v_{m-1},b_0,b_1,\ldots, b_{R-1},b_R,s_1,\ldots,
s_{2Lk},a_1,\ldots,a_P,u_1,\ldots,u_{m}.\end{equation} 
Clearly, the actual probability of observing $q_{1\ldots M}$ is positive, as required.
By the constructions of \S\S\ref{subsec:Xl}-\ref{subsec:abs}, the conditional
probability of $B$ below, given $q_{1\ldots M}$, is evidently positive, as required.
$$B\stackrel{\mathrm{def}}{=}{\cal Z}^{m+1}\times {\cal X}_{b_1}\times \cdots \times {\cal X}_{b_{R-1}}\times {\cal X}_1\times {\cal X}_{s_1} \times
\cdots \times {\cal X}_{s_{2kL-1}}\times {\cal X}_1\times {\cal
X}_{a_1}\times\cdots \times {\cal X}_{a_{P}}\times {\cal Z}^{m}.$$
Finally, since the sequence \eqref{blokk} below was chosen from $B$ 
arbitrarily (\S\ref{subsec:barrier})
and has been shown to be an $l$-barrier of order $r$, this completes the proof of the Lemma.
\begin{equation*}\hspace*{1.5cm}
z_0,z_1,\ldots,z_{m-1},z_{m},x'_1,\ldots,x'_{R-1},x_0,x_1,\ldots,x_{2Lk},x^{''}_1,\ldots,x^{''}_P,z'_1,\ldots,z'_{m}.
\hspace*{1.5cm}\eqref{blokk}
\end{equation*} \end{proof}
\subsection{Proof of Lemma \ref{separated}}\label{sec:separatedproof}
\begin{proof}
We use the notation of the previous proof in \S\ref{sec:proofneljas}. 
We  deal with the following two different situations: First (\S\ref{subsec:donedeal}), all barriers from $B$
as constructed in  the proof of Lemma \ref{separated} are already separated. Obviously,
there is nothing to do in this case.  The second situation (\S\ref{subsec:needmore}) is complementary,
in which case a simple extension will immediately ensure separation.  

\subsubsection{All $x^b\in B$ are already separated}
\label{subsec:donedeal}
Recall the definition of ${\cal Z}$ from \S\ref{subsec:Z}. 
Consider the two cases in the definition separately.
First, suppose ${\cal Z}=\hat{{\cal Z}}\backslash (\cup_{l\in S}{\cal X}_l)$, 
in which case ${\cal Z}$ and ${\cal X}_l$ are disjoint for every $l\in S$. 
This implies that 
every barrier \eqref{blokk} 
is already separated. Indeed, for any $w$, $1\le w\le r$,
and for any $x^b\in B$, the fact that $x^b_{M-\max(m,w)}\not\in {\cal Z}$, for example, 
makes it impossible for $(x'_{1\ldots w},x^b_{1\ldots M-w})\in B$ for any $x'_{1\ldots w}\in \mathcal{X}^w$. 
Consider now the case when ${\cal Z}=\hat{{\cal Z}}\cap {\cal X}_s$ 
for some $s\in C$. Then $$B\subset {\cal X}_s^{m+1}\times
{\cal X}_{b_1}\times \cdots \times {\cal X}_{b_{R-1}}\times {\cal
X}_1\times {\cal X}_{s_1} \times \cdots \times {\cal
X}_{s_{2kL-1}}\times {\cal X}_1\times {\cal X}_{a_1}\times\cdots
\times {\cal X}_{a_{P-1}}\times {\cal X}_s^{m+1}.$$ 
Let $x^b\in B$ be arbitrary. 
Assume first  $L>1$. By construction (\S\ref{subsec:abs}), the states $s_1,\ldots,s_L$ are all distinct.
We now show that $(x'_{1\ldots w},x^b_{1\ldots M-w})\not \in B$ for any $x'_{1\ldots w}\in \mathcal{X}^w$
when $1\le w\le r$.  Note that the sequence 
$$q_{m+2\ldots m+R+2kL+P+1}=(b_1,\ldots,b_{R-1},1,s_1,\ldots,s_{2kL-1},1,a_1,\ldots,a_{P-1},s)$$
is such that no two consecutive states are equal.  It is  straightforward to 
verify that there exist indices $j$, $0\le j\le m-1$, such that, when shifted $w$ positions to the right,
the pair $x_{j+1\,j+2}\in \mathcal{X}_s^2$ would at the same time have to belong to 
$\mathcal{X}_{q_{j+1+w}}\times \mathcal{X}_{q_{j+2+w}}$ with $m+1\le j+1+w<j+2+w\le m+R+2kL+1+P$. This
is clearly a contradiction since  $\mathcal{X}_{q_{j+1+w}}$ and  $\mathcal{X}_{q_{j+2+w}}$ are disjoint
for that range of indices $j$. A verification of the above fact simply amounts to verifying that
the inequality $\max(0,m-w)\le j\le \min(m-1,m+R+2kL-1+P-w)$ is consistent for any $w$ from the admissible
range:  
\begin{enumerate}[i.)]
\item When $0\ge m-w$, $m-1\le m+R+2kL-1+P-w$ ($m\le w \le \min(r,R+2kL+P)$), $0\le j \le m-1$
is evidently consistent.
\item When $0\ge m-w$, $m-1> m+R+2kL-1+P-w$ ($\max(m,R+2kL+P)\le w \le r$), $0\le j \le m+R+2kL-1+P-w$
is also consistent since $m+R+2kL-1+P-r=R+kL-1\ge 0$.
\item When $0< m-w$, $m-1\le m+R+2kL-1+P-w$ ($1\le w \le \min(m-1,R+2kL+P)$), $m-w\le j \le m-1$ is
consistent since $w\ge 1$.
\item When $0< m-w$, $m-1> m+R+2kL-1+P-w$ ($\max(1,R+2kL+P-1)\le w < m$), $m-w\le j \le m+R+2kL-1+P-w$ is
consistent since $R+2kL-1\ge 0$.
\end{enumerate}

Next consider the case of $L=1$ but $s\neq 1$ (that is, $P>0$). Then
$$B\subset {\cal X}_s^{m+1}\times
{\cal X}_{b_1}\times \cdots \times {\cal X}_{b_{R-1}}\times {\cal
X}_1^{2k+1}\times {\cal X}_{a_{1}}\times \cdots \times {\cal
X}_{a_{P-1}}\times {\cal X}_s^{m+1}.$$   If $s\ne 1$, then also $b_i\ne 1$,
$i=1,\ldots, R-1$ and $a_i\ne 1$, $i=1,\ldots, P-1$. To
see that $y$ is separated in this case, simply note
that $x_{M-max(w,m+1)}\not\in\mathcal{X}_s$ for any admissible $w$.

\subsubsection{Barriers $x^b\in B$ need not be separated}
\label{subsec:needmore}
Finally, we consider the case when $L=1$ and $s=1$ (where $s\in C$ is such that 
$\mathcal{Z}=\hat{\mathcal{Z}}\cap \mathcal{X}_s$). 
This implies that $P=0$, $1\in C$, and $p_{1\,1}>0$, which in turn implies that $R=1$, and
$$B\subset {\cal X}_1^{m+1}\times{\cal X}_1^{2k+1}\times {\cal X}_{1}^{m+1}=
{\cal X}_1^{2m+2k+3}.$$ Clearly, the barriers from
$B$ need not be, and indeed, are not separated.  It is, however, easy to extend them to
separated ones. Indeed, let $q_0\ne 1$ be such that
$p_{q_0\,1}>0$ and redefine $B\stackrel{\mathrm{def}}{=}{\cal X}_{q_0}\times B$. Evidently,
any shift of any $x^b\in B$ by $w$ ($1\le w\le r$) positions to the right makes it impossible 
for $x^b_1$ to be simultaneously in $\mathcal{X}_{q_0}$ and in $\mathcal{X}_1$ (since the latter sets are 
disjoint, \S\ref{subsec:Xl}).
\end{proof}


\subsection{Proof of Proposition \ref{kesk}}\label{subsec:keskproof}
\begin{proof}
Let us additionally define the following non-overlapping block-valued processes
\begin{equation*}
U^b_m\stackrel{\mathrm{def}}{=}(X_{(m-1)M+1},\ldots,X_{mM}),\quad
D^b_m\stackrel{\mathrm{def}}{=}(Y_{(m-1)M+1},\ldots,Y_{mM}),\quad m=1,2,\ldots,
\end{equation*}
and stopping times
\begin{align}\label{stop-nub}
&\nu^b_0\stackrel{\mathrm{def}}{=}\min\{m\geq 1: U^b_m\in B,D^b_m=q\},\\
&\nonumber
\nu^b_i\stackrel{\mathrm{def}}{=}\min\{m>\nu^b_{i-1}: U^b_m\in B,D^b_m=q\};\\
&\nonumber \\
\label{stop-Rb}
&R^b_0\stackrel{\mathrm{def}}{=}\min\{m>1:\,\, D^b_m=q\},\\
& \nonumber R^b_i\stackrel{\mathrm{def}}{=}\min\{m>R^b_{i-1}:\,\, D^b_m=q\}.
\end{align}
The process $D^b$ is clearly a time homogeneous, finite state Markov chain. 
Since $Y$ is aperiodic and irreducible, so is $D^b$. Hence $(D^b,U^b)$ is also an HMM.\\
Since $Y$ is stationary (under $\pi$), $q$ occurs in every interval of length $M$ with the same positive 
probability (Lemma \ref{separated}). 
In particular, $q$ belongs to the state space of $D^b$. 
Since $D^b$ is irreducible and its state space is finite, all of its states, including $q$, 
are positive recurrent. Hence $E_{\pi'}(R^b_0)<\infty$ and 
$E_{\pi'}(R^b_1-R^b_0)<\infty$ and recall that $R^b_i-R^b_{i-1}$, $i=1,2,\ldots$ are i.i.d. 
(These and the statements below hold for any initial distribution $\pi'$.)
 The following two equalities are straightforward to verify and ultimately yield the second statement: 
$E_{\pi'}(\nu_1-\nu_0)\leq E_{\pi'}(\nu^b_1-\nu^b_0)=\frac{1}{\gamma^*}E_{\pi'}(R^b_1-R^b_0)<\infty$.
The second equality above is also a simple extension of the Wald's equation (for a general
reference see, for example, \cite{grimmet-stirzaker}).

It can similarly be verified that $E_{\pi'}\nu^b_0= \gamma^*E_{\pi'}R^b_0 +
\frac{1-\gamma^*}{\gamma^*}E_{\pi'}(R^b_1-R^b_0)$, which is again  finite.
Finally, $E_{\pi'}\nu_0\leq M(E_{\pi'}\nu^b_0-1)+1<\infty$.
\end{proof}
\subsection{Proof of Proposition \ref{reg}}\label{subsec:reg}
\begin{proof}
 Recall \eqref{renewal}, the definition of stopping times $\tau$, according
to which, for each $i=1,2,\ldots$ the underlying Markov chain satisfies
$Y_{\tau_i}=l$. Hence, the behavior of $X$ after $\tau_i$ does not
depend of the behavior of $X$ up to $\tau_i$. Together with the fact that
$T_i$ are renewal, this establishes regenerativity of $X$. 
Next, to every $\tau_i$ there corresponds a $r$-order $l$-node and
$\tau_i$ is always $u_j$ for some $j>i$. This means that all the
nodes corresponding to $\tau_i$'s  are also used to define the
alignment as in Definition \ref{l6pmatu}.
 Therefore, the
 alignment up to $\tau_i$ does not depend on the alignment after $\tau_i$.
 In other words, the segment of the alignment process
 that corresponds to  $T_i$ is a function of the segment of $X$ corresponding to the same
 $T_i$. Formally
 $$(V_s:s\in \tau_{i-1}+1,\ldots,\tau_{i})=v_{(l)}(X_s: s\in \tau_{i-1}+1,\ldots,\tau_{i}).$$
Thus, the process $Z$ is regenerative with respect to $\tau$.
\end{proof}
\subsection{Proof of Theorem \ref{saddam}}\label{subsec:proofofsaddam}
\begin{proof}
First note that the right-hand side of \eqref{jaotus} does define a measure.\\
The proof below uses regenerativity of $Z$ in a standard way.
For every $n\geq \tau_0$ and $A\in {\cal B}$, and for every $l\in
S$, we have
\begin{equation*}
{1\over n}\sum_{i=1}^n I_{A\times l}(Z_i)={1\over
n}\sum_{i=1}^{\tau_o} I_{A\times l}(Z_i)+{1\over
n}\sum_{i=\tau_o+1}^{{\tau_{k(n)}}} I_{A\times l}(Z_i)+ {1\over
n}\sum_{i=\tau_{k(n)}+1}^n I_{A\times l}(Z_i)
\end{equation*}
where $k(n)=\max\{k:\tau_k\leq n\}$ stands for the renewal
process. Now, since $\tau_0<\infty$ a.s., we have
$${1\over n}\sum_{i=1}^{\tau_0} I_{A\times l}(Z_i)
\leq {\tau_0\over n}\to 0,\quad {\rm a.s.}.$$
Let $\mu\stackrel{\mathrm{def}}{=}E\tau_0^r$. By \eqref{lopp0},
 $\mu<\infty$. Then
$${n-\tau_{k(n)}\over n}\leq {T_{k(n)+1}\over n}\to 0,\quad{\rm a.s.}$$
Finally, since $Z$ is regenerative with respect to 
$\tau_0,\tau_1,\ldots $, we have
$${1\over n}\sum_{i=\tau_0+1}^{\tau_{k(n)}} I_{A\times l}(Z_i)
= {k(n)\over n}{1\over k(n)}\sum_{k=1}^{{k(n)}} \xi_k,$$ where
$$\xi_k\stackrel{\mathrm{def}}{=}\sum_{i=\tau_{k-1}+1}^{\tau_k}I_{A\times l}(Z_i),\quad k=1,2,\ldots$$
are i.i.d. random variables. Let $m_l(A)$ stand for $E\xi_k$. Thus, $m_l(A)\leq \mu<\infty$. Then, as $n\to
\infty$, we have
$${n\over k(n)}\to {\mu}\quad {\rm and}\quad {1\over k(n)}
\sum_{k=1}^{{k(n)}} \xi_k\to m_l(A),\quad{\rm a.s.}$$
Let us calculate $m_l(A)$. Clearly,
$$m_l(A)=E\sum_{i=1}^{\tau_0^r} I_{A\times l}(Z^r_i).$$
Now
\begin{align*}
m_l(A)&=E\sum_{i=1}^{\tau_0^r} I_{A\times l}(Z^r_i)=
\sum_{j=1}^{\infty}E(\sum_{i=1}^j I_{A\times
l}(Z^r_i)|\tau_0^r=j){\bf P}(\tau_0^r=j)\\
&=\sum_{j=1}^{\infty}\sum_{i=1}^j{\bf P}(Z^r_i\in A\times l|\tau_0^r=j){\bf P}(\tau_0^r=j)\\
&= \sum_{j=1}^{\infty} {\bf P}(Z^r_1\in A\times l|\tau_0^r=j){\bf
P}(\tau_0^r=j)+
\sum_{j=2}^{\infty} {\bf P}(Z^r_2\in A\times l|\tau_0^r=j){\bf P}(\tau_0^r=j)+\cdots \\
&= {\bf P}(Z^r_1\in A\times l, \tau_0^r\geq 1)+{\bf P}
(Z^r_2\in A\times l, \tau_0^r\geq 2)+\cdots\\
&= \sum_{i=1}^{\infty} {\bf P}(Z^r_i \in A\times l, \tau_0^r\geq
i)\leq \sum_{i=1}^{\infty} {\bf P}( \tau_0^r\geq i)=\mu<\infty
\end{align*}
Similarly,
\begin{equation}\label{viterbi}
{1\over n}\sum_{i=1}^n I_{l}(V^r_i)\to {w_l\over \mu}
\leq 1,\quad {\rm a.s.},
\end{equation}
where $w_l\stackrel{\mathrm{def}}{=}\sum_{i=1}^{\infty} P(V^r_i=l, \tau_0^r\geq i)$. 
Hence, we have shown that for each $l\in S$ and for every $A\in
{\cal B}$
\begin{equation}\label{Qkuju}
Q_l^n(A)\to {m_l(A)\over w_l}={\sum_{i=1}^{\infty}{\bf P}(Z^r_i
\in A\times l, \tau_0^r\geq i)\over \sum_{i=1}^{\infty} {\bf
P}(V^r_i=l,\tau_0^r\geq i)},\quad \rm{a.s.}
\end{equation}
Recalling that ${\cal X}$ is a separable metric space and envoking the theory of 
weak convergence of measures  now establishes 
$Q_l^n\Rightarrow Q_l,\quad \text{a.s.}.$
\\\\
It remains to show that for all $l\in S$ and $A\in {\cal B}$
\begin{equation}\label{kill}
P_l^n(A)\to {m_l(A)\over w_l},\quad{\rm a.s.}.
\end{equation}
To see this, consider $\sum_{i=1}^n I_{A\times
l}(X_i,V'_i)$. Since $V'_i=V_i$, if $i\leq \tau_{k(n)}$, we obtain
\begin{equation*}
{1\over n}\sum_{i=1}^n I_{A\times l}(X_i,V'_i)={1\over
n}\sum_{i=1}^{\tau_0} I_{A\times l}(Z_i)+{1\over
n}\sum_{i=\tau_0+1}^{\tau_{k(n)}} I_{A\times l}(Z_i)+ {1\over
n}\sum_{i=\tau_{k(n)}+1}^n I_{A\times l}(X_i,V'_i)\to {m_l(A)\over
\mu}\quad \text{a.s.}
\end{equation*}
Similarly,
\begin{equation}\label{viterbi2}
{1\over n}\sum_{i=1}^n I_{l}(V'_i)\to {1\over
\mu}\sum_{i=1}^{\infty} P_1(V_i=l, \tau_0^r\geq i)={w_l\over
\mu},\quad{\rm a.s.}.
\end{equation}
These convergences prove \eqref{kill}.
\end{proof}

\section*{Acknowledgement}
The first author has been supported by the 
Estonian Science Foundation Grant 5694 at the later stages of the
work. The authors are also thankful to {\em Eurandom} (The Netherlands) 
and Professors R. Gill and A. van der Vaart for their support.  
\bibliography{../ref}
\end{document}